\documentclass[reqno]{amsart}
\usepackage{hyperref}

\usepackage{amsmath}
\usepackage{amssymb}
\usepackage{amsthm}

\allowdisplaybreaks
\numberwithin{equation}{section}

\newtheorem{theorem}{Theorem}[section]
\newtheorem{proposition}[theorem]{Proposition}
\newtheorem{lemma}[theorem]{Lemma}
\newtheorem{corollary}[theorem]{Corollary}

\theoremstyle{definition}
\newtheorem{definition}[theorem]{Definition}
\theoremstyle{remark}

\newtheorem*{warning}{Caution}



\newcommand{\I}{{\mathfrak{I}}}
\newcommand{\N}{{\mathbb{N}}}
\newcommand{\R}{{\mathbb{R}}}

\newcommand{\Z}{{\mathbb{Z}}}
\newcommand{\op}{\textit{op}}

\newcommand{\Schw}{\mathcal S}

\newcommand{\LS}{{LS}_\kappa}
\newcommand{\vLS}{{LS}_\varkappa}

\newcommand{\qtq}[1]{\quad\text{#1}\quad}
\newcommand{\eps}{\varepsilon}
\newcommand{\vk}{\varkappa}

\DeclareMathOperator{\supp}{supp}

\DeclareMathOperator{\Tr}{Tr}
\DeclareMathOperator{\Id}{Id}
\DeclareMathOperator{\sech}{sech}

\newcommand{\ddt}{\tfrac{d}{dt}}

\begin{document}

\title{Global well-posedness for the fifth-order KdV equation in \( H^{-1}(\mathbb{R}) \)}
\author{Bjoern Bringmann, Rowan Killip, and Monica Visan}

\address
{Bjoern Bringmann\\
Department of Mathematics\\
University of California, Los Angeles, CA 90095, USA}
\email{bringmann@math.ucla.edu}

\address
{Rowan Killip\\
Department of Mathematics\\
University of California, Los Angeles, CA 90095, USA}
\email{killip@math.ucla.edu}

\address
{Monica Visan\\
Department of Mathematics\\
University of California, Los Angeles, CA 90095, USA}
\email{visan@math.ucla.edu}

\begin{abstract}
We prove global well-posedness of the fifth-order Korteweg-de~Vries equation on the real line for initial data in $H^{-1}(\mathbb{R})$.  By comparison, the optimal regularity for well-posedness on the torus is known to be $L^2(\R/\Z)$.

In order to prove this result, we develop a strategy for integrating the local smoothing effect into the method of commuting flows introduced previously in~\cite{KV} in the context of KdV.  It is this synthesis that allows us to go beyond the known threshold on the torus.
\end{abstract}

\maketitle

\section{Introduction}

In view of its complete integrability, the Korteweg--de Vries equation
\begin{equation}\label{KdV}
\ddt q = - q''' + 6 q q'
\end{equation}
belongs to an infinite hierarchy of commuting flows.  This equation lies second in the hierarchy, after simple spatial translation.  In this paper, we consider the next equation in the hierarchy, namely,
\begin{align}\label{5th}
\ddt q = q^{(5)} -20 q'q'' - 10 qq''' +30 q^2q' .
\end{align}

All flows in the hierarchy describe the evolution of a real-valued field $q$ on the line (or torus), are Hamiltonian, and share the common Poisson structure:
\begin{align}\label{E:PS}
\{ F, G \} = \int  \frac{\delta F}{\delta q}(x) \biggl(\frac{\delta G}{\delta q}\biggr) '(x) \,dx .
\end{align}
In particular, \eqref{5th} is the flow generated by
\begin{align*}
H_\text{5th}(q) := \int \tfrac12 q''(x)^2 + 5 q(x)q'(x)^2 + \tfrac52 q(x)^4\,dx
\end{align*}
and conserves the Casimir $M(q):=\int q(x)\,dx$, as well as
$$
P(q) := \int \tfrac12 q(x)^2\,dx \qtq{and}  H_\text{KdV} (q) := \int \tfrac12 q'(x)^2 + q(x)^3\,dx,
$$
which generate space translations and the KdV flow, respectively.

Due to its place in the KdV hierarchy, the well-posedness problem for \eqref{5th} has received considerable attention.  Until very recently, the best result on the line was global well-posedness in the energy space $H^2(\R)$, which was proved in \cite{MR3096990,MR3301874}.  We also recommend these papers for a discussion of earlier work in this direction, as well as \cite{MR2653659} for a thorough discussion of results in Fourier--Lebesgue spaces.  Unlike for KdV, there is no regularity at which well-posedness can be proved directly by contraction mapping arguments.  This was proved rigorously by Pilod \cite{MR2446185}, who showed that the data-to-solution map is not $C^2$ at  the origin in $H^s(\R)$ for any $s\in\R$.  It was further shown by Kwon \cite{MR2455780} that this map is not uniformly continuous on bounded sets for any $s>0$.  We do not know of any lower bound on well-posedness for \eqref{5th} on the line analogous to those proved for KdV in \cite{MR2830706}.

On the torus, however,  Kappeler and Molnar, \cite{MR3739929}, have obtained an optimal well-posedness result. Concretely, they show that \eqref{5th} is well-posed in $L^2(\R/\Z)$ and conversely, that the data-to-solution map does not admit a continuous extension to $H^s(\R/\Z)$ for any $s<0$.

Very recently, well-posedness of \eqref{5th} in $L^2(\R)$ was proved in \cite{KV} as an application of the method introduced there for the study of KdV.  One virtue of this method is that it applies equally well both on the line and on the torus.  Thus, given the optimality of the Kappeler--Molnar $L^2(\R/\Z)$ result, this appears to be the limit of the method in its original form.  In fact, at that time, it would have been reasonable to guess that the $L^2(\R)$ result might be optimal; indeed, from \cite{MR2267286,KV,MR2830706,MR2927357} we know that the optimal regularity for well-posedness of KdV is the same on the line and on the torus, namely, $H^{-1}$.  On the other hand, for a generic dispersive PDE, we expect the low-regularity behaviour to be better on the line than on the torus due to the improved dispersion --- on the line, high-frequency waves may escape rapidly to spatial infinity, while on the torus, they are trapped.

In this paper, we show that \eqref{5th} is well-posed in $H^{-1}(\R)$; we believe that this is optimal in the scale of $H^s(\R)$ spaces:

\begin{theorem}[Global well-posedness]\label{T:main}
The fifth-order KdV \eqref{5th} is globally well-posed for initial data in \( H^{-1}(\R) \). More precisely, the solution map extends uniquely from Schwartz space to a jointly continuous map \( \Phi \colon \R \times H^{-1}(\R) \rightarrow H^{-1}(\R) \).
\end{theorem}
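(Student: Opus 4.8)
The plan is to prove global well-posedness of \eqref{5th} in $H^{-1}(\R)$ by synthesizing the method of commuting flows from \cite{KV} with the local smoothing estimate, exactly as advertised in the abstract. The starting point is the well-developed perturbation-determinant/diagonal-Green's-function machinery: for $q \in H^{-1}(\R)$ and $\kappa$ large, one has the conserved quantity $\alpha(\kappa,q) = \operatorname{tr}\bigl(\log(1 + \sqrt{\kappa^2 - \partial^2}^{-1} q \sqrt{\kappa^2-\partial^2}^{-1}) - \dots\bigr)$ controlling the $H^{-1}$ norm uniformly along the flow, together with the regularized flows $H_{\vk}$ (the ``$\vk$-flow'') whose generators are built from $\log\det$. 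One first carries out the construction for Schwartz data, establishing: (i) a priori bounds $\|q(t)\|_{H^{-1}} \lesssim 1$ on the real $H_\text{5th}$-flow depending only on $\|q(0)\|_{H^{-1}}$, via the conserved quantity $\alpha(\kappa, \cdot)$; (ii) well-posedness of each regularized flow $H_\vk$ in $H^{-1}(\R)$, which is comparatively soft since these flows are essentially smoothing/bounded perturbations; and (iii) the commutativity of all these flows, which is inherited from the integrable structure.

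The heart of the argument, and the genuinely new ingredient relative to \cite{KV}, is the convergence step: one must show that as $\vk \to \infty$, the $H_\vk$-flow converges to the $H_\text{5th}$-flow in $C_t H^{-1}_x$ on compact time intervals, uniformly for Schwartz initial data in a bounded subset of $H^{-1}(\R)$. In the original method-of-commuting-flows papers this convergence is obtained purely from equicontinuity in $H^{-1}$ plus frequency-localized estimates; here, because we are below the $L^2$ threshold that was thought to be the barrier, those estimates alone are insufficient and one must feed in local smoothing. Concretely, I would prove a local smoothing bound of the form
\[
\int_{-T}^{T} \!\! \int_{\R} \bigl| |\partial|^{s} \,\theta(x)\, q(t,x) \bigr|^2 \, dx\, dt \;\lesssim_{T,\theta}\; 1
\]
for the difference flows, with a gain of derivatives $s > 0$ beyond the $H^{-1}$ scaling (appropriate to the fifth-order dispersion, so the expected gain is of order two derivatives in $L^2_tL^2_{x,\mathrm{loc}}$), uniformly in $\vk$; this is what converts the crude difference estimate between the error terms of the two flows into something summable. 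The local smoothing estimate itself should be extracted from the same resolvent/transfer-coefficient formalism — differentiating the conserved quantity $\alpha(\kappa,q)$ in time along the $H_\vk$-flow produces a positive local-smoothing-type spacetime integral (a ``Kato smoothing'' mechanism built into the integrable structure), and the key point is to show the implied constants do not degenerate as $\vk\to\infty$.

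With convergence of the Schwartz-data flows in hand, the passage to general $H^{-1}(\R)$ data is the standard limiting argument: the solution map on Schwartz space is (by the above) uniformly continuous on bounded subsets of $H^{-1}$ in the $C([-T,T];H^{-1})$ topology, hence extends continuously to the closure, which is all of $H^{-1}(\R)$; the extended maps automatically form a flow (the group property persists in the limit), and joint continuity in $(t,q)$ follows from equicontinuity in $t$ (uniform on bounded sets, again from the a priori bound and the equation) combined with continuity in $q$. Global existence is free since the a priori bound (i) is global in time. I expect the main obstacle to be precisely step (ii)--(iii) of the convergence argument: establishing the local smoothing estimate \emph{with constants uniform in the regularization parameter $\vk$}, and then correctly organizing the multilinear difference estimates so that the smoothing gain is deployed on exactly the high-frequency interactions where the naive $H^{-1}$ bound fails. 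Subtleties with the local (weighted) nature of the smoothing — handling the weight $\theta(x)$ and its interaction with the nonlocal operators $\sqrt{\kappa^2-\partial^2}^{-1}$, and summing over spatial translates to recover a global statement — are likely where most of the technical work resides.
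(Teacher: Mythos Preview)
Your outline correctly identifies the overall architecture --- commuting regularized flows $H_\kappa$, a priori $H^{-1}$ bounds via $\alpha$, and local smoothing for the difference flow as the new ingredient --- but it is missing two structural components that the paper singles out as essential, and without which the argument does not close.

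First, you never invoke the change of variables $q \mapsto 1/g(\vk;q)$ through the diagonal Green's function. The paper does not estimate $e^{tJ\nabla(H_\text{5th}-H_\kappa)}q - q$ in $H^{-1}$ directly; instead it tracks $1/g_\text{diff}(t) - 1/g_\text{diff}(0)$, whose time derivative (see \eqref{jj defn}) is far better behaved than $\partial_t q$ itself. This is what allows the local smoothing bound to be fed into the convergence step at all: the raw difference-flow equation \eqref{E:H_kappa flow} for $q$ contains $q^2$ with no regularization, so your proposed ``multilinear difference estimates'' have no obvious starting point at $H^{-1}$ regularity. The diffeomorphism property (Lemma~\ref{L:diffeo}) is then used at the very end to transfer convergence of $1/g_n$ in $H^1$ back to convergence of $q_n$ in $H^{-1}$.

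Second, and more seriously, you are missing the tightness/compactness step. Local smoothing is genuinely local in space, so the difference-flow analysis only yields convergence of $\langle\phi, 1/g_n(t) - 1/g_{n,\kappa}(t)\rangle$ for each fixed test function $\phi$ (Corollary~\ref{C:dLS}) --- weak and spatially localized. Your suggestion to ``sum over spatial translates to recover a global statement'' does not work: summing localized estimates recovers a global norm only if you already control the tails, which is precisely tightness. The paper proves separately (Proposition~\ref{P:compact}) that orbits of the full $H_\text{5th}$ flow emanating from a precompact set of initial data remain $H^{-1}$-precompact; this requires a refined form of Proposition~\ref{P:LS5} showing that the high-frequency part of the local smoothing norm is small for equicontinuous data, and is described in the introduction as one of the two principal achievements of the paper. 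Only with this compactness in hand can the weak convergence of $1/g_n$ be upgraded to norm convergence in $C_tH^1$. Your ``standard limiting argument'' via uniform continuity on bounded sets presupposes exactly what this compactness step is needed to establish.
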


As noted before, one expects better dispersion for problems posed on the line than for those on the torus.  One expression of this improvement is the local smoothing effect.  First discovered by Kato \cite{MR0759907} in the context of the KdV equation, this is the phenomenon that the solution appears smoother than the initial data if one works locally in space and averages in time.  We will prove the following: 

\begin{theorem}[Local smoothing]\label{T:LS}
For any initial data $q(0)\in H^{-1}(\R)$, the corresponding solution $q(t)$ constructed in Theorem~\ref{T:main} obeys
\begin{equation}\label{E:T:LS}
\sup_{t_0,x_0\in\R} \ \int_{t_0-1}^{t_0+1}\int_{x_0-1}^{x_0+1}  |q'(t,x)|^2 + |q(t,x)|^2 \, dx\,dt  \lesssim \| q(0) \|_{H^{-1}}^2 + \| q(0) \|_{H^{-1}}^{12}.
\end{equation}
Moreover, for every $t_0,x_0\in\R$, the map $q(0)\mapsto q(t-t_0,x-x_0)$ is continuous as a mapping $H^{-1}(\R) \to L^2_t H^1_x ([-1,1]^2)$.  Lastly, $q(t)$ satisfies \eqref{5th} in the sense of spacetime distributions. 
\end{theorem}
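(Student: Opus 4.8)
The plan is to establish \eqref{E:T:LS} first as an \emph{a priori} estimate for Schwartz solutions, with implicit constant depending only on $\|q(0)\|_{H^{-1}}$, and then to transfer everything to $H^{-1}$ data using Theorem~\ref{T:main} and weak compactness. The engine behind the Schwartz bound is the perturbation-determinant/diagonal-Green's-function formalism, available because \eqref{5th}, like \eqref{KdV}, lies in the KdV hierarchy and so preserves the scattering data of $-\partial_x^2+\kappa^2+q$. For $\kappa\ge1$ write $g(x;\kappa)$ for the diagonal Green's function of this operator; the quantity $\int[\tfrac1{2\kappa}-g(x;\kappa)]\,dx$, suitably integrated in $\kappa$, is comparable to $\|q\|_{H^{-1}}^2$ and, being a function of the scattering data, is conserved by \eqref{5th}. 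For local smoothing I would use the \emph{microscopic} form of this conservation law: there is a current $j(t,x;\kappa)$ --- a rational expression in $g$, $q$ and their derivatives, obtained from the resolvent expansion of~\cite{KV} carried one step further in the hierarchy --- for which $\partial_t\bigl(\tfrac1{2\kappa}-g\bigr)+\partial_x j=0$ along Schwartz solutions of \eqref{5th}.

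Given this, fix a bump $\phi$ with $\phi\equiv1$ on $[-1,1]$ and $\supp\phi\subset(-2,2)$, let $\psi$ be nondecreasing with $\psi'=\phi^2$, and test the microscopic law against $\psi(\cdot-x_0)$, integrating over $t\in[t_0-1,t_0+1]$. This turns \eqref{E:T:LS} into two tasks: bound the boundary term $\int\psi(x-x_0)\bigl[(\tfrac1{2\kappa}-g)(t_0+1,x)-(\tfrac1{2\kappa}-g)(t_0-1,x)\bigr]\,dx$, which is controlled uniformly in time and in $(t_0,x_0)$ by the conserved $H^{-1}$-quantities; and extract from $\int_{t_0-1}^{t_0+1}\!\int\phi(x-x_0)^2\,j\,dx\,dt$ a coercive leading term $\gtrsim\int\!\int\phi^2\bigl(|q'|^2+|q|^2\bigr)$, the remaining multilinear pieces being reduced by integration by parts to lower-order expressions absorbable via one-dimensional Sobolev embedding and the $H^{-1}$ bounds. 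Carrying this out uniformly down to $\kappa$ of order one and for large data is where the method of commuting flows is essential: rather than wrestling the error terms for \eqref{5th} directly, I would first prove the local smoothing bound for the commuting $H_\kappa$ flows generated essentially by $g(\cdot;\kappa)$, whose currents are structurally cleaner, and then recover \eqref{5th} by exhibiting it as a limit of combinations of $H_\kappa$ flows and controlling the difference. The polynomial --- in particular twelfth-power --- dependence in \eqref{E:T:LS} should emerge from this large-data step, reflecting the multilinear degree of the current against the $\kappa$-threshold governing coercivity of the conserved functional.

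The passage to $H^{-1}$ data is then routine. Pick Schwartz $q_n(0)\to q(0)$ in $H^{-1}$; by Theorem~\ref{T:main} the solutions converge, $q_n\to q$ in $C_tH^{-1}$ on compact time intervals, hence in $\mathcal D'$, while by the a priori bound the recentered sequence $\{q_n(\cdot-t_0,\cdot-x_0)\}$ is bounded in $L^2_tH^1_x([-1,1]^2)$ uniformly in $n$ and in $(t_0,x_0)$. A weakly convergent subsequence must have limit $q$, and lower semicontinuity of the norm yields \eqref{E:T:LS} for $q$ with the supremum intact. For the continuity statement, fix $(t_0,x_0)$ and let $q^{(m)}(0)\to q(0)$ in $H^{-1}$; the same reasoning gives $q^{(m)}\rightharpoonup q$ weakly in $L^2_tH^1_x([-1,1]^2)$, so --- this being a Hilbert space --- it remains only to show convergence of norms. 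Working in the weighted Hilbert space with density $\psi'$ (which dominates the norm on $[-1,1]^2$ since $\psi'\equiv1$ there), this reduces to continuity of the functional $q(0)\mapsto\int\!\int\psi'\bigl(|q'|^2+|q|^2\bigr)$; but by the identity of the first step this functional equals the boundary flux --- continuous in $H^{-1}$, since the scattering quantities and the $C_tH^{-1}$ trace are --- plus error terms that converge because $q^{(m)}\to q$ strongly in $L^3_{t,x}$ on compact sets (interpolate the $C_tH^{-1}$ convergence against the uniform $L^2_tH^1_{x,\mathrm{loc}}$ bound). Weak convergence together with norm convergence then gives strong convergence in $L^2_tH^1_x([-1,1]^2)$, the asserted continuity. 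Finally, the corresponding strong convergence $q_n\to q$ in $L^2_tH^1_{x,\mathrm{loc}}$ --- hence also in $L^2_tL^\infty_{x,\mathrm{loc}}$ by one-dimensional Sobolev --- lets one pass to the limit in \eqref{5th}: the linear term converges in $\mathcal D'$ trivially, and the nonlinear terms $q_n'q_n''$, $q_nq_n'''$, $q_n^2q_n'$ converge in $L^1_{t,x,\mathrm{loc}}$ by the relevant product estimates; since each $q_n$ solves \eqref{5th} classically, $q$ solves it in the sense of spacetime distributions.

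The main obstacle is the first step: identifying the microscopic current for \eqref{5th} and proving that, after integration against the weight, its leading part genuinely dominates $\phi^2(|q'|^2+|q|^2)$ while all remaining terms are controlled uniformly in the non-perturbative regime of large $\|q(0)\|_{H^{-1}}$ and small $\kappa$. The derivation of the current is lengthy but mechanical; the coercivity and error analysis for large data is the true technical heart, and is precisely the point at which the local smoothing effect must be synthesized with the method of commuting flows.
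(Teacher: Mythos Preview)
Your outline of the a~priori bound is broadly right in spirit --- localize the microscopic conservation law, identify a coercive leading piece of the current, and absorb the rest --- and the passage of \eqref{E:T:LS} to general $H^{-1}$ data by weak lower semicontinuity is fine. Two points, however, are genuinely off.

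\textbf{The continuity argument is circular.} Your plan is weak convergence plus norm convergence, with the latter coming from the localized current identity. But that identity does not express $\int\!\!\int\phi^2(|q'|^2+|q|^2)$ as ``boundary flux plus strictly lower-order errors''. The current $j_\text{5th}$ in \eqref{E:rho dot 5} contains terms such as $(q')^2$ and, after the Green's function expansion, quadratic expressions like $(h_1'')^2$ that live at exactly the same regularity as the coercive term you are trying to isolate (see Lemma~\ref{L:2for2} and the surrounding analysis). So the ``error terms'' you would need to prove converge already require strong $L^2_tH^1_\text{loc}$ convergence, which is precisely the conclusion. The interpolation you invoke between $C_tH^{-1}$ (strong) and $L^2_tH^1_\text{loc}$ (bounded) only yields strong convergence in $L^2_tH^s_\text{loc}$ for $s<1$, not for $s=1$. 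The paper breaks this circularity differently: it proves the $\varkappa$-quantified local-smoothing estimate of Proposition~\ref{P:LS5}, namely $\|q\|_{\vLS}^2\lesssim\|q(0)\|_{H^{-1}_\varkappa}^2+\varkappa^{-1/6}\delta^2$, which says the \emph{high-frequency} part of the local-smoothing norm is uniformly small on equicontinuous families. Continuity then follows from the splitting
\[
\|\phi(q_n-q_m)\|_{L^2_tH^1}^2\lesssim \varkappa^4\|q_n-q_m\|_{L^\infty_tH^{-1}}^2+\|q_n\|_{\vLS}^2+\|q_m\|_{\vLS}^2,
\]
choosing $\varkappa$ large (using equicontinuity of $\{q_n(0)\}$) and then using Theorem~\ref{T:main} for the first term. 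This high-frequency smallness is the missing ingredient in your proposal.

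\textbf{Two smaller issues.} First, for the distributional claim you cannot pass to the limit in $q_nq_n'''$ or $q_n'q_n''$ with only $L^2_tH^1_\text{loc}$ control; you must first rewrite \eqref{5th} as \eqref{5th'}, moving the extra derivatives onto the test function, so that only $q^2$, $(q')^2$, $q^3\in L^1_\text{loc}$ is needed. Second, the twelfth power in \eqref{E:T:LS} does not arise from the multilinear degree of the current; the paper proves the estimate for small data (with RHS $\lesssim\delta^2$) and the $\|q(0)\|_{H^{-1}}^{12}$ term appears only when undoing the scaling \eqref{E:scaling} to reach large data. Your remark about first proving local smoothing for the $H_\kappa$ flows and then recovering \eqref{5th} also inverts the logical order: the paper proves Proposition~\ref{P:LS5} for the full flow directly, and local smoothing for the \emph{difference} flow (Section~\ref{S:dLS}) is used elsewhere, in the proof of Theorem~\ref{T:main} itself.
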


Note that this constitutes a gain of two derivatives relative to the initial data, as one would predict from the linear part of the equation.  This result is a close analogue of \cite[Theorem~1.2]{KV}, which gives an analogous gain of one derivative for KdV, and we shall be mimicking the overall structure of that argument in order to prove it.  Nevertheless, the proof of this result involves a considerable jump in complexity relative to the KdV case, resulting from a corresponding jump in the number of subtle cancellations that need to be exhibited.

The distributional nature of the solutions constructed in Theorem~\ref{T:main} follows easily from the earlier parts of Theorem~\ref{T:LS}: That Schwartz solutions are distributional solutions is self-evident; to extend this to all $H^{-1}$ solutions, one need simply rewrite \eqref{5th} as
\begin{equation}\label{5th'}
\ddt q = \partial_x^5 q - 5\partial_x^3 \bigl[q^2\bigr] + \partial_x \bigl[ 5 (q')^2 + 10 q^3\bigr],
\end{equation}
integrate by parts, and employ the continuity shown in Theorem~\ref{T:LS}.

In the KdV setting of \cite{KV}, the local smoothing effect was derived only after the proof of the well-posedness theorem was complete.  This will not work here; we need to use the local smoothing effect in order to go beyond what is possible for the torus.  We refer rather nebulously to the local smoothing effect here, because the crude bound \eqref{E:T:LS} is actually wholly ineffective in helping us prove Theorem~\ref{T:main}; we will need rather more subtle manifestations of this phenomenon.

In order to explain what is to be done in this paper,  it will be helpful if we first endeavor to follow the argument used in \cite{KV}.  A central idea there, which we shall mimic exactly, is the introduction of regularized Hamiltonian flows (depending on a parameter $\kappa$) that have the following properties: (1) They commute with the full flow for all values of $\kappa$. (2) They converge to the full PDE as $\kappa\to\infty$. (3) They are well-posed on $H^{-1}(\R)$.

The construction of the regularized Hamiltonians is inspired by a well-known generating series for the polynomial conserved quantities:
\begin{equation}\label{E:alpha formal}
\alpha(\kappa;q)= \frac{1}{4\kappa^3} P(q) - \frac{1}{16\kappa^5} H_\text{KdV}(q) + \frac{1}{64 \kappa^7} H_\text{5th}(q) + O(\kappa^{-9}).
\end{equation}
The function $\alpha(\kappa,q)$ here is a renormalization of the logarithm of the transmission coefficient (=perturbation determinant) at energy $-\kappa^2$ and is known to be jointly real-analytic on the region where 
\begin{equation}\label{E:Bdelta}
q \in B_\delta := \bigl\{ q \in H^{-1}(\R) : \| q\|_{H^{-1}} <\delta\bigr\}
\end{equation}
and $\kappa\geq 1$, for some fixed $\delta\ll1$; see \cite{KVZ,MR2683250} for details.  Note that the restriction to small $q$ appearing here is actually illusory; \eqref{5th} possesses the scaling symmetry
\begin{equation}\label{E:scaling}
q(t,x)\mapsto q_\lambda(t,x):=\lambda^2 q(\lambda^5 t,\lambda x),
\end{equation}
which means that it suffices to prove Theorems~\ref{T:main} and~\ref{T:LS} for small data.

Rearranging \eqref{E:alpha formal} suggests the definition
\begin{equation}\label{E:H kappa defn}
H_\kappa(q) := 64 \kappa^7 \alpha(\kappa;q) - 16 \kappa^4 P(q) + 4 \kappa^2 H_\text{KdV}(q) .
\end{equation}
Indeed, $H_\kappa(q)\to H_\text{5th}(q)$ as $\kappa\to\infty$ for any Schwartz $q\in B_\delta$. Moreover, for finite $\kappa$ the resulting flow is well-posed on $H^{-1}(\R)$.  To see this, we note that the three constituent Hamiltonians commute; moreover, each is well-posed on $H^{-1}(\R)$.  In the case of $\alpha$, this follows from an ODE argument because $\alpha$ is real analytic on $H^{-1}(\R)$.  Well-posedness of the $P$ flow is trivial since it simply generates spatial translations.  Lastly, well-posedness of the $H_\text{KdV}$ flow on $H^{-1}(\R)$ was the principal result of~\cite{KV}.

Let us now discuss the objective of introducing the $H_\kappa$ flows and explain the role of commutativity.  To do this, it is convenient to adopt a compact notation for the flow of a generic Hamiltonian; concretely, we write
$$
q(t) = e^{tJ\nabla H} q(0) \qtq{for the solution to} \frac{d q}{dt} = \partial_x \frac{\delta H}{\delta q}.
$$
Here $J$ formally represents $\partial_x$, attendant to the Poisson structure \eqref{E:PS}.

In order to prove Theorem~\ref{T:main}, we must show the following: For any $T>0$ and any sequence of Schwartz functions $q_n\in B_\delta$ convergent in $H^{-1}(\R)$, the corresponding sequence of solutions $q_n(t)$ is Cauchy in $C_tH^{-1}([-T,T]\times\R)$.  Evidently,
\begin{align*}
\| q_n(t) -q_m(t) \|_{C_t H^{-1}_x} &\leq \| q_n(t) - e^{tJ\nabla H_\kappa} q_n(0) \|_{C_t H^{-1}_x}  \\
&\qquad +  \| q_m(t) - e^{tJ\nabla H_\kappa} q_m(0) \|_{C_t H^{-1}_x}  \\
&\qquad  + \| e^{tJ\nabla H_\kappa} q_n(0)  - e^{tJ\nabla H_\kappa} q_m(0) \|_{C_t H^{-1}_x}.
\end{align*}
Notice that the last term here converges to zero due to the well-posedness of the $H_\kappa$ flow.  In this way, the proof of Theorem~\ref{T:main} is reduced to showing that the $H_\kappa$ flow closely tracks the full $H_\text{5th}$ flow (at least for $\kappa$ large).

But for the commutativity of the flows, this would not be significant progress.  In view of \cite{MR2455780,MR2446185}, we expect the data to solution map for \eqref{5th} to be very irregular.  Indeed, it is this very irregularity that makes it difficult to conceive of any method by which one may control the difference of two solutions.  By the commutativity of the flows, however, we can write
$$
q_n(t) - e^{tJH_\kappa} q_n(0) = \bigl[e^{tJ(H^\text{5th} - H_\kappa)} - \Id\bigr] \bigl(e^{tJH_\kappa} q_n(0)\bigr) .
$$
Thus, we no longer need to estimate the divergence of two solutions; rather, we merely need to bound the way in which a single solution (generated by the difference of the two Hamiltonians) diverges from its initial data.  The price to pay here is that we must control this difference flow (as we shall call it) for a much richer class of initial data, namely,
\begin{align}
Q := \bigl\{ e^{tJH_\kappa} q_n(0) : n\in\N,\ t\in[-T,T],\ \kappa\geq 1\bigr\}.
\end{align}

We will be able to control the difference flow uniformly on $Q$ because we can show that it is uniformly bounded and equicontinuous in $H^{-1}(\R)$.  These assertions follow readily from the results of \cite{KVZ} in a manner demonstrated already in \cite{KV}.  In fact, the arguments developed in this paper show that $Q$ is precompact in $H^{-1}(\R)$; however, we will not need this in what follows.

We turn now to the heart of the matter, namely, controlling the difference flow.  It is not difficult to write down the equation dictating the evolution of $q$ under this flow; see \eqref{E:H_kappa flow}.  The problem lies in making sense of this equation at such low regularity.  In this regard, the difference flow is no better that the original equation \eqref{5th}; the regularized Hamiltonian $H_\kappa$ only provides good cancellation at low frequencies --- it is regularized!  The first step (appearing already in \cite{KV}) is to make a change of variables, replacing the original unknown $q(t)$ by $2\vk-1/g(\vk;q(t))$.  Here $g$ denotes the diagonal Green's function associated to the Schr\"odinger operator with potential $q$ and $\vk\geq 1$ is an energy parameter, which may be regarded as frozen.

It was shown already in \cite{KV} that (for $q$ small) this change of variables is a diffeomorphism from $H^{-1}(\R)$ to $H^1(\R)$.  The virtue of this change of variables is that it regularizes the nonlinearity.  For example, under the flow \eqref{5th},
\begin{align}\label{E:5th 1/g}
\ddt \, \tfrac{1}{2g(x;\vk,q(t))} &= \partial_x \Bigl(\tfrac{-q''(t,x)+3q(t,x)^2-4\vk^2q(t,x)+8\vk^4}{g(x;\vk,q(t))} \Bigr).
\end{align}

Here we see that the greatest obstruction to making sense of RHS\eqref{E:5th 1/g}, at least as a tempered distribution, is the appearance of $q^2$.  Indeed, this term (which serves as the figurehead of a raft of related problems) is precisely what restricted the analysis in \cite[Appendix]{KV} to treating initial data in $L^2(\R)$.

At first glance the remedy seems obvious (we have announced it already), namely, local smoothing.  Indeed, the \emph{a priori} bound \eqref{E:T:LS} provides more than enough regularity to make sense of $q^2$ for solutions to \eqref{5th}.  But here is the problem: we are trying to control the difference flow, not \eqref{5th}.  Thus, we will need a local smoothing effect for the difference flow.  On the other hand, our ambition is to show that solutions of the difference flow do not move far from their initial data, which seems fundamentally in contradiction to the local smoothing effect --- smoothing happens \emph{because} high-frequencies move away quickly.  Thus, we must complete a delicate balancing act: showing that the difference flow exhibits sufficient high-frequency transport so as to have a local smoothing effect that is strong enough to prove that the difference flow actually transports its initial data a negligible distance in $H^{-1}(\R)$.  Finding a path through this narrow divide is one of the two principal achievements of this paper.  It is accomplished by exhibiting numerous subtle cancelations and by squeezing optimal estimates out of a number of paraproducts that arise throughout the analysis.

The arguments described so far can only show that if $q_\kappa(t)$ is a solution to the difference flow, then
$$
\lim_{\kappa\to\infty} \Bigl\| \phi(x) \Bigl[\tfrac{1}{2g(x;\vk,q_\kappa(t))} - \tfrac{1}{2g(x;\vk,q_\kappa(0))} \Bigr] \Bigr\|_{L^\infty H^{-5} ([-T,T]\times\R)} = 0
$$
for some Schwartz function $\phi$.  The localization $\phi$ is inevitable, because the local smoothing effect is indeed \emph{local}. There is also a considerable loss of regularity compared to the diffeomorphism property, which requires convergence in $H^1$, rather than $H^{-5}$.

The loss of regularity is the lesser problem here and was overcome already in \cite{KV}.  The key observation was that equicontinuous sets of initial data remain equicontinuous under the flow due to known conservation laws (cf. Proposition~\ref{P:H-1 bound}).  This then guarantees that the Green's functions (and their reciprocals) remain equicontinuous, because the mapping of $q$ to $g$ commutes with translations.  One then uses the elementary fact that an $H^1$-equicontinuous sequence converging at lower regularity converges also in $H^{1}$.  Norm convergence of the Green's functions is then transferred to $H^{-1}$ convergence of the $q_n$ via the diffeomorphism property.

In order to complete the proof of Theorem~\ref{T:main}, we need a tightness argument that will upgrade the localized convergence to global convergence.  This is considerably more subtle than equicontinuity and constitutes the second principal achievement of this paper.  Our immediate discussion will focus on how we prove such a property for the solutions $q$ to \eqref{5th}.  Together with boundedness and equicontinuity, this tightness will yield compactness of orbits (over bounded time intervals), which then transfers to the Green's functions via the diffeomophism property.

Given that we are in possession of a microscopic conservation law adapted to regularity $H^{-1}(\R)$, it is natural to imagine that tightness can be proved by simply localizing this conservation law near infinity and controlling the increment, say using local smoothing.  Closer inspection, however, reveals that this would be circular reasoning.  Local smoothing is \emph{proved} by localizing the energy and observing that the dominant term in the increment is coercive.  

The key to breaking this cycle is proving that the high-frequency contribution to the local smoothing norm is small; see Proposition~\ref{P:LS5}.  The low-frequency contribution is controlled by using the global $L^\infty_t H^{-1}_x$ bound.

Proposition~\ref{P:LS5} also plays a key role in proving continuous dependence on the initial data in the local smoothing norm; see Theorem~\ref{T:LS}.  Specifically, it reduces our attention to the low-frequency contribution, whose continuity follows from that shown in Theorem~\ref{T:main}.

Taken together, our boundedness, equicontinuity, and tightness arguments show a strong compactness phenomenon for \eqref{5th}: Given a bounded time interval $[-T,T]$ and an $H^{-1}(\R)$-precompact set of initial data, the corresponding orbits all lie inside a compact subset of $H^{-1}(\R)$.  This is weaker than saying that the orbits are precompact in $C_t H^{-1}([-T,T]\times\R)$, which will follow from Theorem~\ref{T:main}.  The missing ingredient is equicontinuity in time (cf. the Arzela--Ascoli Theorem).  This is one of two key roles played by the difference flow.  The second is guaranteeing the uniqueness of subsequential limits (which exist by compactness) of sequences of solutions with convergent initial data.

An alternate approach to showing the uniqueness of subsequential limits would be to identify a suitable collection of properties that intrinsically and uniquely identify the solutions constructed in this paper. This is an important open problem.  Note that we regard the solutions we construct as canonical, since by Theorem~\ref{T:main} no distinct notion of solution can lead to well-posedness.  For further musings on this question (framed in the KdV setting), see \cite{WN}.

The paper is organized as follows:  We begin Section~\ref{S:2} by introducing notation and reviewing some basic estimates.  We then move on to a review of the material developed in \cite{KV} that we will need; see Subsection~\ref{SS:diagonal}.  We then develop a variety of commutator estimates, which in turn inspire our choice of a family of local smoothing norms; see Definition~\ref{D:LS}.

Section~\ref{S:para} is devoted to the analysis of various paraproducts that arise in the subsequent analysis.  An guiding principle here is that we must obtain large negative powers of the frequency parameter $\vk$ in all our estimates. This is only possible (at the low regularity at which we work) by the proper use of the local smoothing norm.  

The centerpiece of Section~\ref{S:LS} is the proof of Proposition~\ref{P:LS5}.  As discussed above, this goes beyond merely providing the basic local smoothing estimate \eqref{E:T:LS}.  It also demonstrates that the high-frequency contribution to the local smoothing norm is small (for equicontinuous sets of initial data).  This is then deployed in Section~\ref{S:C} to prove compactness of trajectories; see Proposition~\ref{P:compact}.

In Section~\ref{S:dLS}, we prove local smoothing for the difference flow.  This is Proposition~\ref{P:dLS}, which is then used in Corollary~\ref{C:dLS} to control the divergence of the $H_\kappa$ and $H_\text{5th}$ flows.  The analysis in this section is relatively short, because we rely on numerous cancellations exhibited earlier in the analysis, particularly in Section~\ref{S:LS}.

The paper ends with Section~\ref{S:WP} which brings together all the foregoing analysis to complete the proofs of Theorems~\ref{T:main} and~\ref{T:LS}.

\subsection*{Acknowledgements} R.~K. was supported by NSF grants DMS-1600942 and DMS-1856755.  M.~V. was supported by NSF grant DMS-1763074.

\section{Preliminaries}\label{S:2}

We begin by reviewing our basic notation and a few elementary results.

Unless indicated otherwise, spacetime norms are taken over the slab $[-1,1]\times\R$:
$$
\| q \|_{L^p_t H^s} = \bigl\| \| q(t) \|_{H^s(\R)} \bigr\|_{L^p(dt;[-1,1])}
$$
In addition to the usual $H^s$ spaces, we employ the notation
\begin{equation}\label{E:Hs}
\| f \|_{H_\vk^s (\R)}^2 := \int (\xi^2+4\vk^2)^{s} |\hat f(\xi)|^2 \,d\xi,
\end{equation}
where our convention for the Fourier transform is
\begin{equation*}
\hat f(\xi) = \tfrac{1}{\sqrt{2\pi}} \int e^{-i\xi x} f(x)\,dx \qtq{so that} \| \hat f \|_{L^2} = \| f \|_{L^2} . 
\end{equation*}
The particular formulation of \eqref{E:Hs}, including the factor of $4$, is explained by its appearance in \eqref{E:L:HS}. 

We will frequently use the elementary facts
\begin{align}\label{elem mult}
\| w f \|_{H^{\pm 1}_\vk} \lesssim \bigl\{ \| w \|_{L^\infty} + \| w' \|_{L^\infty} \bigr\} \| f \|_{H^{\pm1}_\vk}
\qtq{and} \| w f \|_{H^{\pm 1}_\vk} \lesssim \| w \|_{H^{1}}^{ } \| f \|_{H^{\pm1}_\vk} .
\end{align}
Note that by duality, the results for $H^{-1}_\vk$ are equivalent to those for $H^1_\vk$.

We will use the Littlewood--Paley decomposition extensively in our treatment of paraproducts.  It is based on a smooth partition of unity in Fourier space:  Fix $\varphi:\R\to[0,1]$ that is $C^\infty$ and satisfies $\varphi(\xi)=1$ if $|\xi|\leq 1$ and $\varphi(\xi)=0$ if $|\xi|\geq 2$.  We then define Littlewood--Paley pieces
as follows:
$$
\widehat{f_N}(\xi) = \begin{cases} \varphi(\xi) \hat f(\xi) & \text{: if $N=1$,}\\
		[\varphi(\xi/N)-\varphi(2\xi/N)] \hat f(\xi) &\text{: if $N\in\{2,4,8,\ldots\}$}. \end{cases}
$$
Evidently, $f=\sum f_N$ where the sum is over $N\in\{1,2,4,8,\ldots\}$.

We write $\I_p$ for the Schatten classes (= trace ideals) defined over the Hilbert space $L^2(\R)$, with the particular convention that $\I_\infty$ denotes the space of bounded operators endowed with the operator norm.  This differs from the text \cite{MR2154153} where $\I_\infty$ denotes compact operators (with the same norm).  These classes of operators obey the H\"older inequality in the form
$$
\| A B C \|_{\I_p} \leq \| A \|_{\I_{p_1}} \| B \|_{\I_{p_2}} \| C \|_{\I_{p_3}}\qtq{whenever} \tfrac1p=\tfrac1{p_1} + \tfrac1{p_2} + \tfrac1{p_3}.
$$

Throughout the paper, $R_0(\vk)$ denotes the resolvent
\begin{equation}\label{E:R0 kernel}
R_0(\vk) = (-\partial_x^2 +\vk^2)^{-1} \qtq{with kernel} \langle \delta_x, R_0(\vk) \delta_y\rangle = \tfrac{1}{2\vk} e^{-\vk|x-y|} .
\end{equation}
We shall only consider $\vk\geq 1$ and so $R_0(\vk)$ is positive definite. By its square-root, we shall always mean the positive definite square-root.

The natural compactness criterion for subsets of $H^{-1}(\R)$ is easily intuited from the classical case of $L^p(\R^d)$ settled already by Kolmogorov, Tamarkin, and Riesz (cf.~\cite{Riesz}).  As in this classical case, the following is readily proved by using smooth mollification and smooth truncation to reduce matters to the Arzela--Ascoli Theorem:

\begin{lemma}\label{L:Riesz}
A bounded subset $Q\subseteq H^{-1}(\R)$ is precompact in $H^{-1}(\R)$ if and only if it is both \emph{equicontinuous}, which is to say
\begin{align}\label{E:equi defn}
\lim_{N\to\infty}\ \sup_{q\in Q}\ \int_{|\xi|\geq N} \frac{|\hat q(\xi)|^2\,d\xi}{\xi^2+4} =0,
\end{align}
and \emph{tight}, which means that
\begin{align}\label{E:tight defn}
\lim_{R\to\infty}\ \sup_{q\in Q}\ \sup\{ \langle f, q\rangle : \|f\|_{H^1}\leq 1 \text{ and } \supp(f)\subseteq \R\setminus[-R,R]\bigr\} =0.
\end{align}
\end{lemma}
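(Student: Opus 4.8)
The plan is to verify the two implications separately: \emph{precompact} $\Rightarrow$ \emph{equicontinuous and tight} is routine and I would reduce it to a statement about a single $q$, while the converse is the substantive part, which I would prove by approximating $Q$ --- uniformly in $H^{-1}(\R)$ --- by the image of a compact operator and then invoking total boundedness.

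For the routine direction, the key observation is that for fixed $N$ the quantity under the supremum in \eqref{E:equi defn} equals $\|P_{\ge N}q\|_{H^{-1}}^2$ up to the harmless replacement of the weight $\xi^2+4$ by $\xi^2+1$, and for fixed $R$ the quantity in \eqref{E:tight defn} is the norm of the functional $f\mapsto\langle f,q\rangle$ on the subspace $\{f\in H^1:\supp f\subseteq\R\setminus[-R,R]\}$; in particular, both are subadditive in $q$ and dominated by $\|q\|_{H^{-1}}$. Hence, given $\eps>0$, I would choose a finite $\eps$-net $q_1,\dots,q_m$ for $Q$, pick $N$ (resp.\ $R$) so large that each $q_j$ contributes less than $\eps$, and conclude by subadditivity that every $q\in Q$ contributes $O(\eps)$. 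For a single $q$, vanishing of the $N$-quantity as $N\to\infty$ is dominated convergence; vanishing of the $R$-quantity as $R\to\infty$ follows by approximating $q$ in $H^{-1}(\R)$ by a compactly supported $L^2$ function, against which the relevant pairings vanish identically once $R$ exceeds the size of the support.

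For the substantive direction, fix $\psi$ with $\hat\psi\in C_c^\infty$, $0\le\hat\psi\le1$, $\hat\psi\equiv1$ on $[-1,1]$, set $\psi_N(x)=N\psi(Nx)$, and fix $\theta\in C_c^\infty$ with $\theta\equiv1$ on $[-1,1]$, $\theta_R(x)=\theta(x/R)$. Define $T_{N,R}q:=\theta_R\,(\psi_N*q)$. Two facts suffice: (i) $T_{N,R}$ maps bounded subsets of $H^{-1}(\R)$ to precompact ones; and (ii) $\sup_{q\in Q}\|q-T_{N,R}q\|_{H^{-1}}\to0$ on letting first $N\to\infty$ and then $R\to\infty$. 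Granting these, $Q$ lies within any prescribed $H^{-1}$-distance of a precompact set, hence is totally bounded, hence precompact. For (i), convolution with $\psi_N$ maps $H^{-1}(\R)$ boundedly into $H^1(\R)$ (since $\hat\psi(\cdot/N)$ is supported in $\{|\xi|\le2N\}$, so the weight loss is $O(N^2)$), and multiplication by $\theta_R$ then produces a bounded subset of $H^1$ supported in $[-2R,2R]$, which is precompact in $L^2(\R)\hookrightarrow H^{-1}(\R)$ by Rellich--Kondrachov. For (ii), the triangle inequality gives
\[
\|q-T_{N,R}q\|_{H^{-1}}\le\|q-\psi_N*q\|_{H^{-1}}+\|(1-\theta_R)(\psi_N*q)\|_{H^{-1}},
\]
and the first term equals $\bigl(\int_{|\xi|\ge N}|1-\hat\psi(\xi/N)|^2(1+\xi^2)^{-1}|\hat q(\xi)|^2\,d\xi\bigr)^{1/2}$, which is controlled by the equicontinuity modulus \eqref{E:equi defn} and so tends to $0$ as $N\to\infty$, uniformly over $Q$.

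The main obstacle is the second term $\|(1-\theta_R)(\psi_N*q)\|_{H^{-1}}$: I would like to bound it by the tightness modulus \eqref{E:tight defn}, but cannot do so directly, since $\psi_N$ has noncompact support and thus $(1-\theta_R)(\psi_N*q)$ is not literally a piece of $q$ near infinity. I would resolve this by duality together with the Schwartz decay of $\psi_N$: writing the term as $\sup_{\|f\|_{H^1}\le1}\langle\tilde\psi_N*[(1-\theta_R)f],\,q\rangle$ with $\tilde\psi_N(x)=\psi_N(-x)$, and splitting $h:=\tilde\psi_N*[(1-\theta_R)f]=(1-\theta_{R/4})h+\theta_{R/4}h$. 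The first summand is supported in $\R\setminus[-R/2,R/2]$ with $H^1$-norm $\lesssim_N\|f\|_{H^1}$ (multiplication by a bounded, bounded-derivative function and convolution with a Schwartz function being bounded on $H^1$), so its pairing with $q$ is controlled by the tightness modulus at scale $R/2$; the second summand is supported in $[-R/2,R/2]$, while $h$ is a convolution of $\tilde\psi_N$ against a function supported in $\{|x|\ge R\}$, so there it inherits the rapid decay of $\psi_N$ at distances $\gtrsim R/2$, giving $\|\theta_{R/4}h\|_{H^1}\lesssim_{N,k}R^{-k}\|f\|_{H^1}$ for every $k$. With $N$ fixed (using the first term), letting $R\to\infty$ sends both contributions to $0$ uniformly over $Q$, establishing (ii) and completing the proof.
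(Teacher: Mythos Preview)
Your proof is correct and follows essentially the same strategy the paper indicates: smooth mollification in frequency and smooth truncation in space to approximate $Q$ uniformly by a precompact set, with the paper citing Arzela--Ascoli where you invoke Rellich--Kondrachov (which amounts to the same thing here). One minor wording issue: your phrase ``letting first $N\to\infty$ and then $R\to\infty$'' is the reverse of what you actually do at the end (choose $N$ by equicontinuity, then with $N$ fixed choose $R$ by tightness); the latter order is the correct one, since your bounds on the second term carry $N$-dependent constants. Also, with your convention $\theta_{R/4}(x)=\theta(4x/R)$, the cutoff $(1-\theta_{R/4})h$ is supported in $\R\setminus[-R/4,R/4]$ rather than $\R\setminus[-R/2,R/2]$, but this is harmless for the argument.
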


Evidently, the equicontinuity and tightness criteria could also be formulated using a smooth cutoff (to large values of $\xi$ and $x$, respectively). Although a sharp Fourier cutoff is acceptable, one cannot use a sharp spatial cutoff because this is not a bounded operator in $H^{-1}(\R)$.

\subsection{The diagonal Green's function}\label{SS:diagonal}
This subsection is primarily devoted to recounting material from \cite{KV}, which can be consulted for further details.  A workhorse of the analysis therein is the following computation:

\begin{lemma}\label{L:HS}
For $q \in H^{-1}(\R)$,
\begin{equation}\label{E:L:HS}
\bigl\| \sqrt{R_0(\vk)} q  \sqrt{R_0(\vk)}\bigr\|_{\mathfrak{I}_2}^2 = \tfrac{1}{\vk} \int \tfrac{|\hat{q}(\xi)|^2}{\xi^2+4\vk^2}\, d\xi
= \tfrac1\vk \| q \|_{H^{-1}_\vk}^2.
\end{equation}
\end{lemma}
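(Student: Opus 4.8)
I would prove this by first establishing the identity for Schwartz $q$ --- where every manipulation below is manifestly legitimate --- and then extending to general $q\in H^{-1}(\R)$ by density. For the extension I would record the mapping properties that make the operator $\sqrt{R_0(\vk)}\,q\,\sqrt{R_0(\vk)}$ meaningful for distributional $q$: multiplication by $q$ is bounded $H^1(\R)\to H^{-1}(\R)$ with norm $\lesssim\|q\|_{H^{-1}}$ (because $H^1(\R)\cdot H^1(\R)\subseteq H^1(\R)$ in one dimension), while for fixed $\vk\ge1$ the operator $\sqrt{R_0(\vk)}$ is bounded $L^2(\R)\to H^1(\R)$ and $H^{-1}(\R)\to L^2(\R)$; consequently $q\mapsto\sqrt{R_0(\vk)}\,q\,\sqrt{R_0(\vk)}$ is linear and continuous from $H^{-1}(\R)$ into the bounded operators. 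Once the identity is known on the dense set $\Schw(\R)$, it says precisely that this map is $\vk^{-1/2}$ times an isometry from $(\Schw(\R),\|\cdot\|_{H^{-1}_\vk})$ into $\I_2$; since $H^{-1}_\vk$ and $H^{-1}$ carry equivalent norms for fixed $\vk$, a Cauchy-sequence argument (comparing the $\I_2$-limit with the operator-norm limit) then yields both $\sqrt{R_0(\vk)}\,q\,\sqrt{R_0(\vk)}\in\I_2$ and the stated formula for every $q\in H^{-1}(\R)$.

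For Schwartz $q$, I abbreviate $A:=\sqrt{R_0(\vk)}$, a bounded positive operator with a real, symmetric kernel $A(x,y)\ge0$ obeying $\int A(x,z)\,A(x,w)\,dx = \tfrac1{2\vk}e^{-\vk|z-w|}$ --- this is nothing but the kernel form of $A^2=R_0(\vk)$, using \eqref{E:R0 kernel}. Then $AqA$ has kernel $b(x,y)=\int A(x,z)\,q(z)\,A(z,y)\,dz$, and since $q$ is Schwartz the fourfold integral below converges absolutely (bound the square of the $R_0(\vk)$ kernel by $\tfrac1{4\vk^2}e^{-2\vk|z-w|}$ and apply Young), which licenses Fubini:
\begin{align*}
\bigl\| A q A \bigr\|_{\I_2}^2
= \int\!\!\int |b(x,y)|^2\,dx\,dy
&= \int\!\!\int\!\!\int\!\!\int A(x,z)A(x,w)\,q(z)q(w)\,A(z,y)A(w,y)\,dz\,dw\,dx\,dy\\
&= \int\!\!\int q(z)\,q(w)\,\Bigl(\tfrac1{2\vk}e^{-\vk|z-w|}\Bigr)^{\!2}\,dz\,dw
= \frac1{4\vk^2}\int\!\!\int q(z)\,q(w)\,e^{-2\vk|z-w|}\,dz\,dw.
\end{align*}

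To conclude, I would observe that $\tfrac1{4\vk^2}e^{-2\vk|x|}=\tfrac1\vk\cdot\tfrac1{2(2\vk)}e^{-2\vk|x|}$ is exactly $\tfrac1\vk$ times the kernel of $R_0(2\vk)=(-\partial_x^2+4\vk^2)^{-1}$; hence the last double integral equals $\tfrac1\vk\,\langle q, R_0(2\vk)\,q\rangle$, and Plancherel combined with the Fourier multiplier $(\xi^2+4\vk^2)^{-1}$ of $R_0(2\vk)$ turns this into $\tfrac1\vk\int\frac{|\hat q(\xi)|^2}{\xi^2+4\vk^2}\,d\xi$, which is $\tfrac1\vk\|q\|_{H^{-1}_\vk}^2$ by the definition \eqref{E:Hs}. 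That settles the Schwartz case, and the density step above finishes the lemma. There is no real analytic obstacle here: the only point that needs a little care is precisely the passage to distributional $q$, i.e.\ confirming that $\sqrt{R_0(\vk)}\,q\,\sqrt{R_0(\vk)}$ still makes sense and that the identity survives the limit, which is exactly what the mapping properties and density argument in the first paragraph are for.
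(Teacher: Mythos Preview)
Your proof is correct. The paper does not actually supply a proof of this lemma; it is recalled from \cite{KV} as a known computation. The approach most commonly given (and the one underlying \cite{KV}) works on the Fourier side: conjugating by the Fourier transform, $\sqrt{R_0(\vk)}\,q\,\sqrt{R_0(\vk)}$ has integral kernel $\tfrac{1}{\sqrt{2\pi}}\,\hat q(\xi-\eta)\,(\xi^2+\vk^2)^{-1/2}(\eta^2+\vk^2)^{-1/2}$, so the $\I_2$-norm squared becomes $\tfrac1{2\pi}\iint |\hat q(\xi-\eta)|^2(\xi^2+\vk^2)^{-1}(\eta^2+\vk^2)^{-1}\,d\xi\,d\eta$, and the inner convolution integral is evaluated (e.g.\ by residues) to $\tfrac{2\pi}{\vk}(\zeta^2+4\vk^2)^{-1}$.

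Your route stays in physical space and avoids that contour integral entirely: you recognize $(R_0(\vk)\text{-kernel})^2$ directly as $\vk^{-1}$ times the $R_0(2\vk)$-kernel, which is a nice shortcut. Both arguments are short; yours is arguably more elementary, while the Fourier version makes the appearance of the symbol $(\xi^2+4\vk^2)^{-1}$ more transparent. Your density extension via the $H^1\cdot H^1\subset H^1$ algebra property and comparison of $\I_2$ and operator-norm limits is the right way to pass to general $q\in H^{-1}$, and is carried out carefully.
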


For $q\in B_\delta$, $\vk\geq 1$, and $\delta>0$ sufficiently small, this lemma guarantees that it is possible to use a Neumann series to construct the resolvent
\begin{equation}\label{E:Neumann}
\!\!R(\vk)=\bigl( -\partial^2 + q +\vk^2\bigr)^{-1} = \sum_{\ell=0}^\infty (-1)^\ell \sqrt{R_0(\vk)} \bigl( \sqrt{R_0(\vk)} q \sqrt{R_0(\vk)}\, \bigr)^\ell\! \sqrt{R_0(\vk)}.
\end{equation}

Using this series, one may show that $R(\vk)$ admits a continuous integral kernel.  Restricting this to the diagonal, yields what we term the \emph{diagonal Green's function}:
\begin{equation}\label{E:g defn}
g(x;\vk,q) := \langle \delta_x, R(\vk) \delta_x\rangle = \tfrac{1}{2\vk} + \sum_{\ell=1}^\infty h_\ell(x;\vk,q)
\end{equation}
where
\begin{equation}\label{E:h defn}
h_\ell(x;\vk,q) := (-1)^\ell \bigl\langle \sqrt{R_0(\vk)} \delta_x,\  \bigl( \sqrt{R_0(\vk)} q \sqrt{R_0(\vk)}\, \bigr)^\ell
	\sqrt{R_0(\vk)}\delta_x\bigr\rangle.
\end{equation}

Arguing by duality and using Lemma~\ref{L:HS}, one readily sees that
\begin{align}\label{E:h in H1}
\| h_\ell \|_{H^{1}_\vk} & \leq \vk^{-(\ell+1)/2} \| q \|_{H^{-1}_\vk} ^\ell \quad\text{for all $\ell\geq 1$.}
\end{align}
From \eqref{E:h in H1} and Cauchy--Schwartz in Fourier space, we deduce that
\begin{equation}\label{E:h in Linfty}
\| h_\ell \|_{L^\infty}  \leq \vk^{-(\ell+2)/2} \| q \|_{H^{-1}_\vk} ^\ell ,
\end{equation}
which shows that $g$ will be non-vanishing for all $\vk\geq 1$ if $\delta>0$ is sufficiently small.

From these estimates and the  inverse function theorem, one can then show:

\begin{lemma}[Diffeomorphism property]\label{L:diffeo} For $\delta>0$ sufficiently small, both
$$
q\mapsto g-\tfrac1{2\vk}\qtq{and} q\mapsto 2\vk - \tfrac1{g}
$$
are diffeomorphisms from $B_\delta$ into $H^1_\vk(\R)$ for every $\vk\geq 1$; both map zero to zero.
\end{lemma}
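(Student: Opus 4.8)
The plan is to obtain both assertions from the analytic inverse function theorem, treating $\Phi(q):=g(\,\cdot\,;\vk,q)-\tfrac1{2\vk}$ first and then deducing the statement for $q\mapsto 2\vk-\tfrac1g$ by postcomposition. For $q\in B_\delta$ with $\delta$ small, Lemma~\ref{L:HS} gives $\|\sqrt{R_0(\vk)}\,q\,\sqrt{R_0(\vk)}\|_{\I_2}=\vk^{-1/2}\|q\|_{H^{-1}_\vk}$, which is $\lesssim\delta$ uniformly in $\vk\geq1$ since $\|q\|_{H^{-1}_\vk}\leq\|q\|_{H^{-1}_1}\lesssim\|q\|_{H^{-1}}$; hence the Neumann series \eqref{E:g defn}--\eqref{E:h defn} converges. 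As $h_\ell$ is a bounded symmetric $\ell$-linear form in $q$ --- which is essentially the content of \eqref{E:h in H1} --- the series $\Phi(q)=\sum_{\ell\geq1}h_\ell$ is a convergent power series and therefore real-analytic on $B_\delta$ with values in $H^1_\vk(\R)$; moreover $\Phi(0)=0$ and $\|\Phi(q)\|_{H^1_\vk}\lesssim\vk^{-1}\|q\|_{H^{-1}_\vk}$ by summing \eqref{E:h in H1}.

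The degree-one term is $h_1(x;\vk,q)=-\langle R_0(\vk)\delta_x,\,q\,R_0(\vk)\delta_x\rangle=-\tfrac{1}{4\vk^2}\int e^{-2\vk|x-y|}q(y)\,dy$, i.e.\ the Fourier multiplier $\widehat{d\Phi(0)q}(\xi)=-\vk^{-1}(\xi^2+4\vk^2)^{-1}\hat q(\xi)$. Comparing with the weight in \eqref{E:Hs}, $d\Phi(0)$ is $\vk^{-1}$ times an isometry of $H^{-1}_\vk(\R)$ onto $H^1_\vk(\R)$, with inverse the multiplier $-\vk(\xi^2+4\vk^2)$; in particular it is a bounded linear isomorphism between these spaces. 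Writing $\Phi=d\Phi(0)\circ(\Id+\Psi)$ with $\Psi:=d\Phi(0)^{-1}\sum_{\ell\geq2}h_\ell$, the multilinear bounds \eqref{E:h in H1} yield $\|\Psi(q_1)-\Psi(q_2)\|_{H^{-1}_\vk}\lesssim\delta\,\|q_1-q_2\|_{H^{-1}_\vk}$ on $B_\delta$, uniformly in $\vk\geq1$. Shrinking $\delta$, the map $\Id+\Psi$ is thus a bi-Lipschitz homeomorphism of $B_\delta$ onto an open set; it is analytic with everywhere-invertible differential $\Id+d\Psi(q)$ (Neumann series), hence an analytic diffeomorphism onto its image, and composing with the linear isomorphism $d\Phi(0)$ gives the first claim for $\Phi$.

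For the second map, \eqref{E:h in Linfty} keeps $g$ within $\tfrac1{4\vk}$ of $\tfrac1{2\vk}$ pointwise (for $\delta$ small), so $g$ is bounded away from $0$ and $\tilde g:=g-\tfrac1{2\vk}$ satisfies $\|\tilde g\|_{H^1_\vk}\lesssim\vk^{-1}\delta$. Since $H^1_\vk(\R)$ embeds in $L^\infty$ and is closed under multiplication by \eqref{elem mult}, the expansion $2\vk-\tfrac1g=\sum_{k\geq1}(-1)^{k-1}(2\vk)^{k+1}\tilde g^{\,k}$ converges and defines a real-analytic map into $H^1_\vk$ on the ball $\{\|\tilde g\|_{H^1_\vk}<c\vk^{-1}\}$, vanishing at $\tilde g=0$ with differential $(2\vk)^2\,\Id$ there; as in the previous paragraph it is a diffeomorphism onto an open set once $\delta$ is small. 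The bound $\|\Phi(q)\|_{H^1_\vk}\lesssim\vk^{-1}\delta$ ensures $\Phi(B_\delta)$ lies in this ball, so $q\mapsto2\vk-\tfrac1g$ is the composition of two diffeomorphisms and the claim follows.

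The one point requiring care is uniformity in $\vk$: the conclusion must hold with a single $\delta$ good for all $\vk\geq1$, which is exactly why everything above is phrased in the $\vk$-weighted norms of \eqref{E:Hs} and rests on the $\vk$-uniform estimates \eqref{E:h in H1}, \eqref{E:h in Linfty}, and \eqref{elem mult}; the remaining inverse-function-theorem mechanics are routine. A secondary bookkeeping nuisance is that the image of the second map is only bounded in $H^1_\vk$ by $O(\vk\delta)$ rather than $O(1)$, but this causes no difficulty, since we claim only that the map lands \emph{in} $H^1_\vk(\R)$.
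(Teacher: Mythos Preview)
Your proposal is correct and is precisely what the paper has in mind: the paper does not give a detailed proof but merely indicates ``From these estimates and the inverse function theorem, one can then show'' the lemma, referring to \eqref{E:h in H1} and \eqref{E:h in Linfty}. You have filled in exactly those details---using the explicit form \eqref{h1 nice} of $h_1$ to identify $d\Phi(0)$ as the isomorphism $-\vk^{-1}R_0(2\vk):H^{-1}_\vk\to H^1_\vk$, controlling the higher-order terms via \eqref{E:h in H1} to render $\Id+\Psi$ a Lipschitz perturbation of the identity (uniformly in $\vk$), and handling the second map by postcomposing with the analytic inversion $\tilde g\mapsto 2\vk-(\tfrac1{2\vk}+\tilde g)^{-1}$, which is well defined thanks to \eqref{E:h in Linfty}.
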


A natural prerequisite for well-posedness in $H^{-1}(\R)$ is an \emph{a priori} bound on this norm, say, for Schwartz solutions.  In the case of KdV, such results were proved in \cite{KVZ,KT,MR2683250}; these can be readily adapted to \eqref{5th} due to their presence in the same integrable hierarchy.  In order to prove local smoothing, however, it will be essential for us to have a microscopic conservation law attendant to this low regularity.  In this paper, we will use the conserved density
\begin{equation}\label{E:rho defn}
\rho(x;\vk,q) := 2\vk^2 - \tfrac{\vk}{g(x;\vk,q)} + 4 \vk^2 [R_0(2\vk) q] (x),
\end{equation}
which is shown in \cite{KV} to be positive and integrable for $q\in B_\delta$, $\vk\geq 1$, and $\delta>0$ sufficiently small.  Moreover, for such parameters we have
\begin{equation}\label{E:rho alpha}
\int_\R \rho(x;\vk,q) \,dx  = 2\vk\alpha(\vk,q) \approx \int_\R \frac{|\widehat{q}(\xi)|^2}{\xi^2+4\vk^2} \, d\xi.
\end{equation}

\begin{warning}
The definition of $\rho$ here differs from that used in \cite{KV} by the numerical factor $2\vk$.  This change connects $\rho$ more closely with the $H^{-1}_\vk$ norm and will simplify our analysis of local smoothing estimates in a comparable way.
\end{warning}

The quantity $\alpha(q;\vk)$ appearing in \eqref{E:rho alpha} coincides with the renormalized logarithm of the perturbation determinant discussed in \cite{KVZ,MR2683250}; its asymptotic expansion appeared already in \eqref{E:alpha formal}.  In this way, the conservation of $\alpha$ under \eqref{5th} can be inferred from the general framework of the inverse scattering technique.  Alternately, one may deduce this from the associated microscopic conservation law derived in \cite[Appendix]{KV}.  Concretely, for Schwartz solutions to \eqref{5th},
\begin{equation}\label{mscl}
\partial_t \rho + \partial_x j_\text{5th} = 0,
\end{equation}
where
\begin{align}\label{E:rho dot 5}
j_\text{5th} &= - \tfrac{2\vk}{g(\vk)} [ 16\vk^5 g(\vk) - 8 \vk^4 + 4 \vk^2 q + q'' - 3 q^2 ] \notag \\
&\quad -  4\varkappa^2 R_0(2\varkappa) \big[q^{(4)} - 5 (q^2)'' + 5 (q')^2 + 10 q^3\big];
\end{align}
indeed the first term here originates in the time derivative of the reciprocal of the Green's function, while the second term arises by substituting \eqref{5th} into the last term in \eqref{E:rho defn}.

It will also be important for us to have analogous information regarding the $H_\kappa$ flow, that is, the flow induced by the Hamiltonian \eqref{E:H kappa defn} via the symplectic structure \eqref{E:PS}.  Under this flow,
\begin{equation}\label{E:H_kappa flow}
\ddt q = \bigl\{ - 64 \kappa^7 g(\kappa) + 32\kappa^6 - 16 \kappa^4 q + 4 \kappa^2 \bigl[-q'' + 3 q^2\bigr]\bigr\}'
\end{equation}
and correspondingly, $\partial_t \rho + \partial_x j_\kappa = 0$ with
\begin{align}\label{E:rho dot k}
j_\kappa &=  \tfrac{32\kappa^7\vk}{\kappa^2-\varkappa^2} \tfrac{g(\kappa)}{g(\varkappa)} 	- 8 \kappa^2\vk \tfrac{2\kappa^2+2\vk^2 - q}{g(\vk)}
	- \tfrac{32\kappa^2\vk^6}{\kappa^2-\varkappa^2} \notag \\
&\quad - 16 \kappa^2 \varkappa^2 R_0(2\varkappa) \bigl[ -16\kappa^5 g(\kappa) +8\kappa^4 - 4 \kappa^2 q - q'' + 3 q^2 \bigr].
\end{align}
Once again, we have grouped the terms according to their origin.  It is important here to distinguish between $\vk\geq1$, which denotes the energy parameter in $\rho$, and $\kappa\geq 1$ which describes the flow under consideration.

As remarked earlier, these computations provide an elementary justification for the conservation of $\alpha(q)$ under these flows and so, by \eqref{E:rho alpha}, of the following:

\begin{proposition}[A priori bound]\label{P:H-1 bound}
For $\delta>0$ sufficiently small, 
\begin{equation}\label{E:P:H-1}
\bigl\| e^{tJ\nabla H_\kappa+ s J\nabla H_\text{5th}+\tau J\nabla P} q \bigr\|_{H^{-1}_\vk(\R)} \approx \| q \|_{H^{-1}_\vk(\R)},
\end{equation}
uniformly for $t,s,\tau\in\R$, $\kappa,\vk\geq 1$, and $q\in B_\delta\cap\Schw(\R)$.  Moreover, if $Q\subset B_\delta\cap\Schw(\R)$ is $H^{-1}$-equicontinuous, then so is
$$
Q_* = \bigl\{ e^{tJ\nabla H_\kappa+ s J\nabla H_\text{5th}+\tau J\nabla P} q : t,s,\tau\in\R,\ \kappa\geq 1,\text{ and }q\in Q\bigr\}.
$$
\end{proposition}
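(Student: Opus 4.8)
\emph{Proof proposal.} The plan is to deduce both assertions from a single fact: for every $\vk\ge1$ the integral $\int_\R\rho(x;\vk,q)\,dx$ is conserved along all three flows, and, by \eqref{E:rho alpha}, this integral is comparable to $\|q\|_{H^{-1}_\vk}^2$ uniformly on $B_\delta$. Since the $P$ flow is merely spatial translation, it leaves invariant every norm $\|\cdot\|_{H^{-1}_\vk}$, leaves $\rho$ invariant up to translation, and does not affect the equicontinuity defect \eqref{E:equi defn}; so I would set $\tau=0$ at the outset and work only with the $H_\kappa$ and $H_\text{5th}$ flows. I would also record that Schwartz data give Schwartz solutions of each of these flows: for $H_\text{5th}$ this follows from the global well-posedness in high-regularity spaces cited in the introduction together with persistence of regularity, and for $H_\kappa=64\kappa^7\alpha(\kappa)-16\kappa^4 P+4\kappa^2 H_\text{KdV}$ from the same fact for $H_\text{KdV}$ (from \cite{KV}) and for the $\alpha$ flow (a real-analytic ODE, by \cite{KVZ}), composed.

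For \eqref{E:P:H-1}, set $q(t,s):=e^{tJ\nabla H_\kappa+sJ\nabla H_\text{5th}}q$; by the previous paragraph this is a Schwartz function, jointly continuous in $(t,s)$ into $H^{-1}(\R)$. So long as $q(t,s)$ lies in $B_\delta$ (where $\rho$ is defined), I would integrate the microscopic conservation laws \eqref{mscl} and $\partial_t\rho+\partial_x j_\kappa=0$ over $x\in\R$; the fluxes \eqref{E:rho dot 5} and \eqref{E:rho dot k} tend to zero at spatial infinity for Schwartz $q$ (their limiting constants are arranged to cancel), so $\int\rho\,dx$ is conserved along each of the two flows, and hence, since the flows commute, along their composition: $\int\rho(x;\vk,q(t,s))\,dx=\int\rho(x;\vk,q)\,dx$ for all $t,s$ and all $\vk\ge1$. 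Combined with \eqref{E:rho alpha} this gives $\|q(t,s)\|_{H^{-1}_\vk}\approx\|q\|_{H^{-1}_\vk}$, which at $\vk=1$ feeds a continuity argument showing that the orbit never leaves $B_\delta$; this both legitimizes the preceding step and proves \eqref{E:P:H-1}, with $Q_*$ bounded in $H^{-1}$ and contained in $B_\delta$.

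For the equicontinuity statement, I would first isolate the elementary fact that, for a \emph{bounded} set $Q\subseteq H^{-1}(\R)$, equicontinuity in the sense of \eqref{E:equi defn} is equivalent to $\lim_{\vk\to\infty}\sup_{q\in Q}\|q\|_{H^{-1}_\vk}^2=0$: one implication is $\int_{|\xi|\ge N}\frac{|\hat q(\xi)|^2}{\xi^2+4}\,d\xi\lesssim\|q\|_{H^{-1}_N}^2$, and the reverse follows by splitting the frequency integral defining $\|q\|_{H^{-1}_\vk}^2$ at a scale dictated by the equicontinuity of $Q$, the remaining low-frequency piece being at most $\vk^{-2}$ times the (uniformly bounded) $H^{-1}$-norm. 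Granting this, equicontinuity of $Q$ and \eqref{E:rho alpha} give $\sup_{q\in Q}\int\rho(x;\vk,q)\,dx\to0$ as $\vk\to\infty$; by the conservation established above (and since $Q_*\subseteq B_\delta$ is bounded, from the first part) the same holds with $Q_*$ in place of $Q$; invoking \eqref{E:rho alpha} once more gives $\sup_{q\in Q_*}\|q\|_{H^{-1}_\vk}^2\to0$, and hence, by the equivalence, $Q_*$ is equicontinuous.

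I expect the one genuinely delicate point to be closing the continuity argument that traps the orbit inside $B_\delta$: the qualitative comparison \eqref{E:rho alpha} does not by itself forbid the orbit from drifting out of $B_\delta$, so one must quantify it — either by noting that the non-quadratic part of $\int\rho\,dx$ is genuinely higher order in $q$, so that the implied constants tend to $1$ as $\delta\downarrow0$ (via the Neumann expansion \eqref{E:g defn}--\eqref{E:h in Linfty} and the Schatten-class bound of Lemma~\ref{L:HS}), or by first using the scaling symmetry \eqref{E:scaling} to reduce to data of arbitrarily small norm. Everything else — integration by parts in the conservation laws, vanishing of the fluxes, commutativity, and the frequency-space manipulations above — is soft.
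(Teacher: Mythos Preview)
Your proposal is correct and follows essentially the same approach as the paper: conservation of $\int\rho(\,\cdot\,;\vk,q)\,dx=2\vk\alpha(\vk,q)$ under each constituent flow together with the two-sided comparison \eqref{E:rho alpha} gives \eqref{E:P:H-1}, and the equicontinuity claim then follows precisely from the uniformity in $\vk$ via the characterization $\sup_{q\in Q}\|q\|_{H^{-1}_\vk}\to0$ that you spell out (which is what the paper means by ``follows directly from \eqref{E:P:H-1} due to the uniformity in $\vk$,'' deferring the details to \cite[\S4]{KV}). Your discussion of the bootstrap keeping the orbit inside $B_\delta$ is a point the paper leaves implicit; your suggested resolution via shrinking $\delta$ so the implied constants approach $1$ is the standard one.
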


The equicontinuity claim here follows directly from \eqref{E:P:H-1} due to the uniformity in $\vk$; this is discussed in Section~4 of \cite{KV}.

It will be essential for our analysis to understand the large-$\vk$ behavior of $g(x;\vk)$.  Our next lemma provides important information in this direction.  The exact formulation of the identities that follow is dictated by the need to exhibit certain cancellations later on.  At this moment, however, it is instructive to imagine that our goal is to show that the RHS\eqref{E:H_kappa flow} converges to the RHS\eqref{5th'} as $\kappa\to\infty$.  We do not include a corresponding expression for the cubic terms because it is so complicated as to be of little use in the subsequent analysis.  Although the key identities \eqref{h1 nice} and \eqref{h2 nice} were derived already in \cite{KV}, this result does not appear there and so we include a proof.

\begin{lemma}\label{L:series expansion}
For $q \in H^{-1}(\R)$ and $\vk\geq 1$, we have
\begin{align}
4\vk^2\bigl[ 16 \vk^5 h_1 + 4\vk^2 q + q'' \bigr] &= 4\vk^3 h_1^{(4)} = -q^{(4)} +  \vk h_1^{(6)} \label{E:h1 exp} \\
16 \vk^5 h_2 + 3 \vk^2 (h_1'')^2 - 3q^2 &= - 4 \vk^4 \bigl[ 5 (h_1')^2 - 5\partial_x^2 (h_1^2) \bigr] \notag \\
&\quad  + 4 \vk^4 \partial_x^2 R_0(2\vk) \bigl[(h_1')^2 + 2\partial_x^2 (h_1^2)\bigr].  \label{E:h2 exp}
\end{align}
\end{lemma}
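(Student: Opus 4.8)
The plan is to prove the two displayed identities by exploiting the fact that $h_1$ and $h_2$ are explicit functions of $q$ built from the free resolvent $R_0(\vk)$. The key starting point is the known formula $h_1 = -\sqrt{R_0(\vk)}\,q\,\sqrt{R_0(\vk)}\,\delta_x$ evaluated on the diagonal; concretely, since $R_0(\vk)$ has kernel $\tfrac{1}{2\vk}e^{-\vk|x-y|}$, one finds after integrating that $h_1 = -R_0(2\vk)q$ up to the numerical normalization, or equivalently that $h_1$ solves $(-\partial_x^2 + 4\vk^2)h_1 = -q$ (this is already implicit in \eqref{E:rho defn}). Using this, the chain $q = -(-\partial_x^2+4\vk^2)h_1 = h_1'' - 4\vk^2 h_1$ lets us rewrite $4\vk^2 q + q''$ purely in terms of derivatives of $h_1$. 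First I would substitute $q = h_1'' - 4\vk^2 h_1$ into $16\vk^5 h_1 + 4\vk^2 q + q''$ and collect terms: the $h_1$ and $h_1''$ pieces should telescope so that what survives is a multiple of $h_1^{(4)}$, giving the first equality in \eqref{E:h1 exp}; the second equality then follows by applying $(-\partial_x^2+4\vk^2)$ once more (i.e. using $q^{(4)} = h_1^{(6)} - 4\vk^2 h_1^{(4)}$) and matching coefficients. This part is entirely mechanical once the defining relation for $h_1$ is in hand.

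For \eqref{E:h2 exp} the situation is more delicate because $h_2$ is quadratic in $q$. The plan is to write $h_2(x) = \langle \sqrt{R_0}\delta_x, \sqrt{R_0}\,q\,R_0\,q\,\sqrt{R_0}\,\delta_x\rangle$ and expand the kernel explicitly. Using the exponential kernel of $R_0(\vk)$ one can compute $h_2(x)$ as a double integral $\iint \tfrac{1}{(2\vk)^3} e^{-\vk|x-y|}e^{-\vk|y-z|}e^{-\vk|z-x|} q(y) q(z)\,dy\,dz$; this is a symmetric bilinear form in $q$ that can be massaged. The cleanest route is probably to work at the level of the relation that $g$ (hence each $h_\ell$) satisfies — namely the well-known Riccati/Gel'fand–Dikii type identity for the diagonal Green's function, $-\tfrac12 g'' g + \tfrac14 (g')^2 + (q+\vk^2) g^2 = \tfrac14$ — and extract from it the order-$q^2$ relation after substituting $g = \tfrac{1}{2\vk} + h_1 + h_2 + \cdots$. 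Collecting the quadratic-in-$q$ terms of this identity should yield an equation of the form $4\vk^2 h_2 + (\text{terms quadratic in } h_1) = 0$ up to derivatives, which one then converts, using $q = h_1'' - 4\vk^2 h_1$ again, into the stated form involving $3q^2$, $(h_1'')^2$, $(h_1')^2$, $\partial_x^2(h_1^2)$, and the extra term with $\partial_x^2 R_0(2\vk)$.

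The main obstacle I expect is the bookkeeping in \eqref{E:h2 exp}: one must show that a specific combination of $(h_1')^2$ and $\partial_x^2(h_1^2)$ reproduces $3q^2 - 16\vk^5 h_2 - 3\vk^2(h_1'')^2$ exactly, and the appearance of $\partial_x^2 R_0(2\vk)[\,\cdots\,]$ on the right signals that the naive algebraic identity has a nonlocal remainder that only collapses after applying $(-\partial_x^2 + 4\vk^2)$ or, equivalently, after "inverting" the relation between $q$ and $h_1$ in the quadratic terms. The trick will be to recognize that $q^2$ is \emph{not} a local polynomial in $h_1$ and its derivatives — only $(-\partial_x^2+4\vk^2)$ applied to a local expression differs from $4\vk^2 q^2$ by the controlled term $4\vk^4\partial_x^2 R_0(2\vk)[\cdots]$. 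So concretely I would: (i) establish the defining relations for $h_1$ and $h_2$ from the Neumann series; (ii) verify \eqref{E:h1 exp} by direct substitution; (iii) derive the quadratic Riccati relation among $\tfrac{1}{2\vk}, h_1, h_2$; (iv) rewrite everything in terms of $h_1$ using $q = h_1'' - 4\vk^2 h_1$, carefully tracking which terms are genuinely local and which require the $R_0(2\vk)$ correction; and (v) match coefficients to land on \eqref{E:h2 exp}. Steps (iii)–(iv) are where the real care is needed, and the $H^{-1}$ regularity is handled automatically since every identity is first checked for Schwartz $q$ and both sides are continuous in $q\in H^{-1}$ by the estimates \eqref{E:h in H1}–\eqref{E:h in Linfty}.
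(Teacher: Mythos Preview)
Your overall strategy is sound, but there is a normalization slip you must fix before anything works: from the kernel computation one gets $h_1 = -\tfrac{1}{\vk}R_0(2\vk)q$, not $h_1 = -R_0(2\vk)q$, so the correct defining relation is $q = \vk h_1'' - 4\vk^3 h_1$, not $q = h_1'' - 4\vk^2 h_1$. With the wrong normalization the telescoping in \eqref{E:h1 exp} fails (you will be left with a stray $16\vk^5 h_1 - 16\vk^4 h_1$), and all the quadratic bookkeeping in \eqref{E:h2 exp} goes off as well. Once this is corrected, your substitution argument for \eqref{E:h1 exp} is essentially equivalent to the paper's, which simply verifies the same identity on the Fourier side as a symbol computation.

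For \eqref{E:h2 exp} your route is genuinely different. The paper does not touch the Riccati identity for $g$; instead it writes down an explicit one-dimensional integral representation for $\widehat{h_2}(\xi)$ (obtained by integrating out one Fourier variable by residues) and then reduces \eqref{E:h2 exp} to a bare algebraic identity among rational functions of $\xi$, $\eta$, $\xi-\eta$, and $\vk$, which is checked by hand. Your proposal---extract the quadratic-in-$q$ part of the Gel'fand--Dikii relation $-\tfrac12 g''g + \tfrac14(g')^2 + (q+\vk^2)g^2 = \tfrac14$, solve for $h_2$ via $R_0(2\vk)$, and then substitute $q = \vk h_1'' - 4\vk^3 h_1$---does work and lands on \eqref{E:h2 exp} after the manipulation $\partial_x^2 R_0(2\vk)[-3\partial_x^2(h_1^2)+12\vk^2 h_1^2] = 3\partial_x^2(h_1^2)$. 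The trade-off: the paper's approach is fully self-contained (the $\widehat{h_2}$ formula follows directly from \eqref{E:h defn}), whereas yours imports the Riccati identity, which is classical but not proved in this paper and would need to be supplied. On the other hand, your route makes the structural reason for the $R_0(2\vk)$ correction transparent---it appears precisely because the quadratic relation gives $(-\partial_x^2+4\vk^2)h_2$ rather than $h_2$ itself.
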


\begin{proof}
From the explicit kernel \eqref{E:R0 kernel} for $R_0$, we deduce 
\begin{equation}\label{h1 nice}
h_1(x) = - \langle \delta_x , R_0(\vk) q R_0(\vk) \delta_x \rangle =- \tfrac{1}{\vk} R_0(2\vk) q (x) .
\end{equation}
Thus, the claims \eqref{E:h1 exp} follow from the symbol identities
\begin{equation*}
4\vk^2\bigl[ - \tfrac{16 \vk^4}{\xi^2+4\vk^2} + 4\vk^2 -\xi^2 \bigr]  = - \tfrac{4\vk^2\xi^4}{\xi^2+ 4 \vk^2} = -\xi^4 + \tfrac{\xi^6}{\xi^2+4\kappa^2}.
\end{equation*}

We turn now to the quadratic term $h_2$, for which we have (cf. \cite[Appendix]{KV}),
\begin{align}\label{h2 nice}
\widehat{h_2}(\xi) 
&= \frac{1}{2\vk \sqrt{2\pi}} \int_\R \frac{\xi^2+ (\xi-\eta)^2+\eta^2+24\vk^2}{(\xi^2+4\vk^2)((\xi-\eta)^2+4\vk^2)(\eta^2+4\vk^2)} \hat{q}(\xi-\eta) \hat{q}(\eta) \, d\eta.
\end{align}
We note that the definition \eqref{E:h defn} yields a double-integral representation for $\hat h_2$; to obtain \eqref{h2 nice}, one then needs to integrate out one variable.  This is easily done by the method of residues, for example.

In this way, \eqref{E:h2 exp} reduces to the algebraic identity
\begin{align*}
&\quad\frac{8\vk^4 [\xi^2+ (\xi-\eta)^2+\eta^2+24\vk^2]}{(\xi^2+4\vk^2)((\xi-\eta)^2+4\vk^2)(\eta^2+4\vk^2)} + \frac{3 \eta^2(\xi-\eta)^2}{((\xi-\eta)^2+4\vk^2)(\eta^2+4\vk^2)}  - 3 \\
&= - \frac{20\vk^2[-\eta(\xi-\eta)+\xi^2]}{((\xi-\eta)^2+4\vk^2)(\eta^2+4\vk^2)} + \frac{4\vk^2\xi^2[\eta(\xi-\eta)+2\xi^2]}{(\xi^2+4\vk^2)((\xi-\eta)^2+4\vk^2)(\eta^2+4\vk^2)}. \qedhere
\end{align*}
\end{proof}

\subsection{Commutator estimates}\label{SS:commutator}

In this subsection, we present several commutator estimates, which will then inform our definition of a local smoothing norm. 

\begin{lemma}[Basic commutator identity]\label{L:basic_commutator}
\begin{align*}
[R_0, \phi ] &= R_0 (\phi'\partial + \partial \phi') R_0 = R_0 (2\phi'\partial + \phi'') R_0 = R_0 (2\partial\phi' - \phi'') R_0.
\end{align*}
\end{lemma}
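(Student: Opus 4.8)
The plan is to reduce the whole statement to the elementary resolvent-commutator identity. Write $L := -\partial^2 + \vk^2$, so that $R_0 = R_0(\vk) = L^{-1}$ (recall $\vk\ge 1$, so $L$ is a positive invertible operator). In the applications $\phi$ is smooth with all derivatives bounded, which I will assume; this ensures that multiplication by $\phi$, $\phi'$, $\phi''$ maps $H^2(\R)$ (and $L^2(\R)$) boundedly into $L^2(\R)$, so that every operator product appearing below is a genuine bounded operator on $L^2(\R)$. The starting point is the algebraic identity
\[
[R_0,\phi] = R_0\phi - \phi R_0 = R_0(\phi L - L\phi)R_0 = R_0[\phi,L]R_0 ,
\]
which follows at once from $LR_0 = R_0 L = \Id$: insert $\Id = R_0 L$ to the left of $\phi R_0$ and $\Id = LR_0$ to the right of $R_0\phi$. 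Thus everything comes down to computing the first-order operator $[\phi,L]$ and then rewriting it in the three forms appearing in the statement.

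Next I would compute $[\phi,L]$ from the Leibniz rule. Since $\phi$ commutes with $\vk^2$, we have $[\phi,L] = [\phi,-\partial^2]$, and for $g$ smooth,
\[
\phi(-g'') - \bigl(-(\phi g)''\bigr) = (\phi g)'' - \phi g'' = \phi'' g + 2\phi' g' ,
\]
so that $[\phi,L] = 2\phi'\partial + \phi''$ as operators. Substituting this into the displayed identity yields $[R_0,\phi] = R_0(2\phi'\partial + \phi'')R_0$, which is the second of the three claimed expressions.

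It then remains to observe that the other two expressions agree with $2\phi'\partial + \phi''$ as differential operators. This is immediate from the product rule $\partial\phi' = \phi'\partial + \phi''$ (meaning $\partial(\phi' g) = \phi' g' + \phi'' g$): indeed $\phi'\partial + \partial\phi' = \phi'\partial + (\phi'\partial + \phi'') = 2\phi'\partial + \phi''$, and likewise $2\partial\phi' - \phi'' = 2(\phi'\partial + \phi'') - \phi'' = 2\phi'\partial + \phi''$. Sandwiching each of these (equal) operators between two copies of $R_0$ gives all three identities in the statement.

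I do not expect any real obstacle here; the only point requiring a little care is the justification that the formal manipulations take place among honest bounded operators on $L^2(\R)$, which is handled either by the mapping properties $R_0\colon L^2\to H^2$, $\partial\colon H^2\to H^1$, and boundedness of multiplication by $\phi,\phi',\phi''$, or — even more simply — by first verifying the identity on the dense subspace $\Schw(\R)$ and then extending by continuity.
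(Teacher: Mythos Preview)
Your argument is correct and is exactly the paper's approach: the paper's one-line proof simply records the identity $[-\partial^2+\vk^2,\phi] = -\partial\phi' - \phi'\partial$ (equivalently $[\phi,L]=\phi'\partial+\partial\phi'$), from which all three displayed forms follow by the Leibniz rewritings you give. You have merely spelled out the details.
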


\begin{proof}
Follows directly from $[-\partial^2+\vk^2,\phi] = - \partial\phi' - \phi'\partial$.
\end{proof}

\begin{lemma}[Multiplicative commutation]\label{L:w commutator}
Assume $w:\R\to (0,\infty)$ satisfies
\begin{equation}\label{E:w hyp}
|w''(x)| + |w'(x)|\lesssim w(x) \qtq{and} \frac{w(y)}{w(x)}\lesssim e^{|x-y|/2}
\end{equation}
uniformly for $x,y\in\R$.  Then
\begin{equation}\label{w under over}
\bigl\| w \partial R_0(\vk) \tfrac{1}{w} \bigr\|_{L^p\to L^p} \lesssim \vk^{-1} \qtq{and} \bigl\| w R_0(\vk) \tfrac{1}{w} \bigr\|_{L^p\to L^p} \lesssim \vk^{-2}
\end{equation}
for every $1\leq p\leq\infty$.  Moreover, the operator $B=B(\vk)$ defined by the identity
\begin{equation}\label{E:w commutator}
w( x) R_0(\vk) = \sqrt{R_0(\vk)} (\Id+B) \sqrt{R_0(\vk)}\, w( x) \qtq{satisfies} \| B\|_\op \lesssim \vk^{-1}
\end{equation}
uniformly for $\vk\geq1$.
\end{lemma}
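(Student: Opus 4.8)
The plan is to obtain the two $L^p$ bounds by Schur's test applied to explicit integral kernels, and to derive the bound on $B$ by invoking the commutator identity of Lemma~\ref{L:basic_commutator} twice. For \eqref{w under over} I would argue as follows. Using the kernel $\tfrac1{2\vk}e^{-\vk|x-y|}$ of $R_0(\vk)$ recorded in \eqref{E:R0 kernel}, one checks that $wR_0(\vk)\tfrac1w$ and $w\partial R_0(\vk)\tfrac1w$ are the integral operators with kernels $\tfrac{w(x)}{w(y)}\tfrac1{2\vk}e^{-\vk|x-y|}$ and $-\tfrac12\operatorname{sgn}(x-y)\tfrac{w(x)}{w(y)}e^{-\vk|x-y|}$, respectively. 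The second condition in \eqref{E:w hyp} is symmetric in $x,y$ and hence also gives $\tfrac{w(x)}{w(y)}\lesssim e^{|x-y|/2}$; since $\vk\ge1$, this dominates both kernels in absolute value by constant multiples of $\tfrac1{2\vk}e^{-\vk|x-y|/2}$ and $e^{-\vk|x-y|/2}$. These majorants are symmetric in $(x,y)$ with row (equivalently column) integrals $\lesssim\vk^{-2}$ and $\lesssim\vk^{-1}$, so Schur's test delivers \eqref{w under over} for every $1\le p\le\infty$ at once.

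For $B$, solving \eqref{E:w commutator} and using $\tfrac1{\sqrt{R_0}}R_0\tfrac1{\sqrt{R_0}}=\Id$ together with $wR_0\tfrac1w-R_0=[w,R_0]\tfrac1w$ gives $B=\tfrac1{\sqrt{R_0}}[w,R_0]\tfrac1w\tfrac1{\sqrt{R_0}}$. Writing $v:=\tfrac1w$ and substituting $[w,R_0]=-R_0(2w'\partial+w'')R_0$ from Lemma~\ref{L:basic_commutator} removes the left factor, since $\tfrac1{\sqrt{R_0}}R_0=\sqrt{R_0}$, leaving $B=-\sqrt{R_0}(2w'\partial+w'')R_0\,v\,\tfrac1{\sqrt{R_0}}$. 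Applying the commutator identity a second time as $R_0v=vR_0+R_0(2v'\partial+v'')R_0$, and again using $R_0\tfrac1{\sqrt{R_0}}=\sqrt{R_0}$, converts this into
\[
B=-\sqrt{R_0}(2w'\partial+w'')\,v\,\sqrt{R_0}\;-\;\sqrt{R_0}(2w'\partial+w'')\,R_0\,(2v'\partial+v'')\,\sqrt{R_0},
\]
in which no unbounded factor $\tfrac1{\sqrt{R_0}}$ survives.

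It then remains to bound the operator norm of each of these two terms by $\vk^{-1}$. For this I would use that $\partial$ commutes with $R_0$ and $\sqrt{R_0}$, that $\|\sqrt{R_0}\|_{\op}=\vk^{-1}$ and $\|\partial\sqrt{R_0}\|_{\op}\le1$, and — the key point — that although $w'$, $w''$, $v'=-w'/w^2$, $v''$ are individually unbounded, each can be matched with a factor of $v$, or with another such derivative, to produce a \emph{bounded} multiplication operator. Indeed \eqref{E:w hyp} yields $\|w'v\|_{L^\infty}+\|w''v\|_{L^\infty}+\|w'v'\|_{L^\infty}\lesssim1$, while its elementary consequences $|v'|+|v''|\lesssim v$ give the pointwise bound $|w^{(k)}(x)\,v^{(j)}(y)|\lesssim w(x)/w(y)\lesssim e^{|x-y|/2}$, whence Schur's test gives $\|w^{(k)}R_0v^{(j)}\|_{\op}\lesssim\vk^{-2}$ for $k,j\in\{1,2\}$. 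Pushing the interior derivatives outward via $w'\partial=\partial w'-w''$ and $\partial v=v\partial+v'$ and commuting $\partial$ through $\sqrt{R_0}$, the first term above becomes a combination of $\sqrt{R_0}(w'v)(\partial\sqrt{R_0})$ and $\sqrt{R_0}(2w'v'+w''v)\sqrt{R_0}$, of norms $\lesssim\vk^{-1}$ and $\lesssim\vk^{-2}$, while the second term becomes a combination of four pieces, each of the form $(\partial\sqrt{R_0})\,[w^{(k)}R_0v^{(j)}]\,(\partial\sqrt{R_0})$ with possibly an outer $\partial\sqrt{R_0}$ replaced by $\sqrt{R_0}$, and hence of norm $\lesssim\vk^{-2}$. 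Summing gives $\|B\|_{\op}\lesssim\vk^{-1}$. I expect the only genuine subtlety to be the bookkeeping in this last step: organizing the two commutator expansions so that every unbounded factor $\tfrac1{\sqrt{R_0}}$ is eliminated while every unbounded factor $w^{(k)}$ or $v^{(j)}$ remains paired with a compensating one, so that only bounded multiplication operators and bounded Fourier multipliers ever occur; the resulting individual estimates are routine.
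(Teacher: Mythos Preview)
Your proof of \eqref{w under over} via Schur's test is exactly what the paper does. Your treatment of $B$ is correct but takes a different route. The paper introduces the analytic family
\[
B(z) := (-\partial^2+\vk^2)^z R_0(\vk) (w'\partial + \partial w') R_0(\vk) \tfrac{1}{w} (-\partial^2+\vk^2)^{1-z},
\]
notes that $B(1/2)=\pm B$, and then bounds $\|B(0)\|_\op$ and $\|B(1)\|_\op$ by $O(\vk^{-1})$ using only the first hypothesis in \eqref{E:w hyp} together with the already-proved bounds \eqref{w under over}; the result follows by complex interpolation. Your approach instead iterates Lemma~\ref{L:basic_commutator} to kill both factors of $R_0^{-1/2}$ and reduce $B$ to a finite sum of products of bounded Fourier multipliers and multiplication operators (or Schur-bounded integral operators of the form $w^{(k)}R_0 v^{(j)}$), which you then estimate directly. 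The paper's argument is shorter and recycles \eqref{w under over} cleanly, while yours is more elementary in that it avoids complex interpolation entirely; the trade-off is the extra bookkeeping you flag at the end, which you have handled correctly.
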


\begin{proof}
The estimate \eqref{w under over} follows from Schur's test by using the explicit kernel for $R_0$ and just the second inequality in \eqref{E:w hyp}.

Let us now consider the operators
\begin{equation}
B(z) := (-\partial^2+\vk^2)^z R_0(\vk) (w'\partial + \partial w') R_0(\vk) \tfrac{1}{w} (-\partial^2+\vk^2)^{1-z} .
\end{equation}
Note that by Lemma~\ref{L:basic_commutator}, this reduces to $B$ when $z=1/2$.  Our goal is to bound $B(1/2)$ via complex interpolation.  As imaginary powers of positive operators are unitary, this reduces the proof of \eqref{E:w commutator} to showing that
$$
\| B(0) \|_\op + \| B(1) \|_\op   \lesssim  \vk^{-1}.
$$

The latter is the simpler of the two.  As
$$
B(1) = (2w'\partial + w'') R_0(\vk) \tfrac{1}{w},
$$
we need only apply the first relation in \eqref{E:w hyp} and then \eqref{w under over}.

Analogously, $B(0)$ can be handled by writing 
\begin{align*}
B(0) &= R_0(\vk) (2\partial w' - w'') R_0(\vk) \tfrac{1}{w} (-\partial^2+\vk^2) \\
&= R_0(\vk) (2\partial \tfrac{w'}{w} - \tfrac{w''}{w}) -  R_0(\vk) (2\partial w' - w'') R_0(\vk) \bigl(2\partial \tfrac{w'}{w^2} - (\tfrac{w'}{w^2})' \bigr)
\end{align*} 
and then employing \eqref{E:w hyp} and \eqref{w under over}.
\end{proof}

In connection with proving local smoothing, we shall have to deal extensively with localizing weights.  With this in mind, it is convenient to make a definitive choice of such a weight for use throughout the paper.  We select
\begin{equation}\label{E:psi defn}
\psi( x ) := \sech\bigl(\tfrac{x}{99}\bigr) \qtq{and} \psi_z( x ) := \psi( x -z ).
\end{equation}
While much about this choice is arbitrary, let us quickly mention two particular considerations. First,
\begin{equation}\label{E:del psi}
\bigl| (\partial^s_x \psi^m)(x) \bigr| \lesssim \psi^m(x)  
\end{equation}
for any pair of integers $m,s\geq 1$.  In particular, $w(x)=\psi(x)^m$ satisfies the first constraint in \eqref{E:w hyp}. Our second consideration was that the number $99$ is large enough to guarantee that $w(x)=\psi(x)^m$ also satisfies the second hypotheses of Lemma~\ref{L:w commutator} for all powers $1\leq m\leq 12$.

With the choice of cutoff made, we may now introduce the norm that will be central to all our local smoothing analysis:

\begin{definition}\label{D:LS}
For $\vk\geq 1$, we define
\begin{equation}\label{E:D:LS}
\bigl\| q \bigr\|_{\vLS} := \sup_{z\in\R} \, \bigl\| (\psi_z^6q)'' \bigr\|_{L^2_t H^{-1}_\vk ([-1,1]\times\R)} .
\end{equation}
\end{definition}

The time interval is fixed as $[-1,1]$ both for expository simplicity and because allowing for a general time interval, say $[-T,T]$, does not produce meaningfully better results.  Indeed, high-frequency wave packets can accumulate their entire local-smoothing norm in an arbitrarily short time interval and so our bounds will not improve in the limit $T\to 0$.  Conversely, for long time intervals, our arguments do not yield better results than can be obtained \emph{a posteriori} by covering $[-T,T]$ with unit intervals.

The power $6$ appearing in \eqref{E:D:LS} is chosen for its divisibility properties --- it will allow us to redistribute weights among multiple copies of $q$ without introducing fractions.  In view of such changes in powers and because derivatives of the weight will appear from commutators arising in the analysis, it is important that we prove the following:

\begin{lemma}\label{L:change}
Given \( \phi \in \Schw(\R) \),
\begin{equation}\label{E:L:change}
 \| (\phi q)'' \|_{L^2_t H^{-1}_\vk} \lesssim_\phi \| q \|_{\vLS}  + \| q \|_{L^\infty_t H^{-1}_\vk} .
\end{equation}
Analogously, for any $s\in\{0,1,2\}$, we have
\begin{equation}\label{E:L:change'}
\| \partial^s (\phi q) \|_{L^2_t H^{-1}_\vk} + \| \phi \partial^s q \|_{L^2_t H^{-1}_\vk}
	\lesssim_\phi \vk^{\frac{s-2}{3}} \Bigl\{ \| q \|_{L^\infty_t H^{-1}} + \| q \|_{\vLS}  \Bigr\} .
\end{equation}
\end{lemma}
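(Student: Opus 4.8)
The plan is to prove both estimates by the same mechanism: inserting a second, slightly larger cutoff built from $\psi$, and then trading powers of $\vk$ against derivatives using the frequency-localization inherent in the $H^{\pm1}_\vk$ norms together with the Littlewood--Paley machinery. First I would fix some $\psi$-based cutoff $\chi$ with $\chi\equiv 1$ on $\supp\phi$ and $\chi = \psi_{z}^{6}$ for a suitable finite collection of centres $z$ (or, more cleanly, dominate $\phi$ by a finite sum of translates $\psi_{z_j}^{6}$), so that $\phi q = \phi\,(\chi q)$ and all commutators that appear have Schwartz, rapidly-decaying coefficients with the bounds \eqref{E:del psi} at their disposal. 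This reduces \eqref{E:L:change} to estimating $(\phi\,(\psi_z^6 q))''$ in terms of $(\psi_z^6 q)''$ and lower-order terms; expanding the two derivatives by the Leibniz rule produces the leading term $\phi\,(\psi_z^6 q)''$, which is controlled by \eqref{elem mult} and the definition of $\|\cdot\|_{\vLS}$, plus terms with at most one derivative on $\psi_z^6 q$, all multiplied by Schwartz functions. Those lower-order pieces are then absorbed into $\|q\|_{L^\infty_t H^{-1}_\vk}$ via \eqref{elem mult} once one observes $\|q(t)\|_{H^{-1}_\vk}\le\|q(t)\|_{L^\infty_t H^{-1}_\vk}$, after paying a harmless $L^2_t\hookleftarrow L^\infty_t$ on the fixed interval $[-1,1]$.

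For \eqref{E:L:change'}, the new feature is the explicit power $\vk^{(s-2)/3}$, which must be extracted by interpolating between two extreme cases. The plan is to split $q=q_{\le\vk}+q_{>\vk}$ (or, more finely, by a Littlewood--Paley sum). On the high-frequency part, derivatives are essentially free: $\|\partial^s f\|_{H^{-1}_\vk}\lesssim \|f\|_{H^{s-1}_\vk}$ and, crucially, on frequencies $\gtrsim\vk$ one has $(\xi^2+4\vk^2)^{(s-1)/2}\lesssim \vk^{s-2}(\xi^2+4\vk^2)^{1/2}\cdot(\text{gain})$, so that two of the three derivatives cost nothing while the pieces are paid for by $\|q\|_{\vLS}$ — this is where the square $(\psi_z^6 q)''$ in Definition~\ref{D:LS} is exactly matched. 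On the low-frequency part, $\|\partial^s f\|_{H^{-1}_\vk}\lesssim \vk^{s}\|f\|_{H^{-1}_\vk}$ would be too lossy, so instead one uses that on frequencies $\lesssim\vk$ all the $H^{s}_\vk$ norms are comparable up to a single power $\vk^{s-2}$ relative to $H^{-2}_\vk$, and then recovers the claimed $\vk^{(s-2)/3}$ by optimizing the frequency cutoff (choosing the split at frequency $\vk^{2/3}$ rather than $\vk$): high frequencies above $\vk^{2/3}$ gain from $\|q\|_{\vLS}$, low frequencies below $\vk^{2/3}$ lose only $(\vk^{2/3})^{s}\cdot\vk^{-2}\approx\vk^{(s-2)/3}\cdot\vk^{-2/3}$, and one checks the two contributions balance. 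The same cutoff trick as above handles the commutators from moving $\phi$ (or $\psi_z^6$) past $\partial^s$, using \eqref{E:del psi} and \eqref{elem mult}; the term $\phi\partial^s q$ is treated identically after writing $\phi\partial^s q = \partial^s(\phi q) - [\,\partial^s,\phi\,]q$ and noting the commutator has strictly lower order in $\partial$.

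The main obstacle I anticipate is not any single estimate but the careful bookkeeping of \emph{which} power of $\vk$ goes with \emph{which} frequency range, so that the optimization producing exactly $\vk^{(s-2)/3}$ comes out right and uniformly in $\vk\ge 1$ and in the supremum over centres $z$; in particular one must make sure the local-smoothing norm, which only controls $(\psi_z^6 q)''$ (two derivatives, squared weight), is invoked in precisely the frequency regime where two free derivatives are available, and that the weight powers stay within the range $1\le m\le 12$ for which Lemma~\ref{L:w commutator} and the hypotheses \eqref{E:w hyp} were arranged. A secondary technical point is that a sharp frequency cutoff at $\vk^{2/3}$ is not bounded on $H^{-1}_\vk$-type spaces paired against multiplication by $\phi$, so one should use the smooth Littlewood--Paley projections $f_N$ and sum the geometric series in $N$, which is routine but needs to be written with the commutator $[\phi, P_{\le N}]$ estimated via the standard kernel bounds; none of this is deep, but it is where the proof's length resides.
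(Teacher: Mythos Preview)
Your overall strategy is right, but there is one genuine gap and one substantial overcomplication.

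\textbf{The gap in \eqref{E:L:change}.} After Leibniz, the cross term $(\phi\psi_z^6)'(\psi_z^6 q)'$ carries one derivative on $\psi_z^6 q$. You claim this is absorbed into $\|q\|_{L^\infty_t H^{-1}_\vk}$ via \eqref{elem mult}, but $\|f'\|_{H^{-1}_\vk}$ is \emph{not} controlled by $\|f\|_{H^{-1}_\vk}$; at best it is controlled by $\|f\|_{L^2}$, which you do not have. The paper fixes this by the elementary interpolation
\[
\|(\psi_z^6 q)'\|_{H^{-1}_\vk} \le \|(\psi_z^6 q)''\|_{H^{-1}_\vk}^{1/2}\,\|\psi_z^6 q\|_{H^{-1}_\vk}^{1/2},
\]
which follows from Cauchy--Schwarz on the Fourier side since $\tfrac{\xi^2}{\xi^2+4\vk^2}=\bigl(\tfrac{\xi^4}{\xi^2+4\vk^2}\bigr)^{1/2}\bigl(\tfrac{1}{\xi^2+4\vk^2}\bigr)^{1/2}$. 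Separately, your device of choosing $\chi\equiv 1$ on $\supp\phi$, or a \emph{finite} sum of translates $\psi_{z_j}^6$, does not cover a general Schwartz $\phi$ with unbounded support. The paper instead uses the continuous partition $\int\psi_z^{12}\,dz\equiv\tfrac{512}{7}$ and integrates in $z$, controlling the resulting weight by $\int\|\phi\psi_z^6\|_{H^3}\,dz\lesssim_\phi 1$.

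\textbf{The overcomplication in \eqref{E:L:change'}.} All of your Littlewood--Paley splitting, the optimization at $\vk^{2/3}$, and the worry about $[\phi,P_{\le N}]$ are unnecessary. The paper obtains the power $\vk^{(s-2)/3}$ in one line by applying the pointwise symbol inequality
\[
\frac{\xi^{2s}}{\xi^2+4\vk^2} \lesssim \vk^{\frac{2(s-2)}{3}}\Bigl\{\frac{1}{\xi^2+4}+\frac{\xi^4}{\xi^2+4\vk^2}\Bigr\}
\]
directly to the function $\phi q$ (Plancherel), yielding $\|\partial^s(\phi q)\|_{H^{-1}_\vk}\lesssim \vk^{(s-2)/3}\{\|\phi q\|_{H^{-1}}+\|(\phi q)''\|_{H^{-1}_\vk}\}$, and then invokes \eqref{E:L:change}. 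Because the frequency decomposition is applied to $\phi q$ as a whole, no commutator with a projection ever arises. Your optimization at $\vk^{2/3}$ is hidden in the symbol inequality (that is exactly where the two terms on the right balance), but there is no need to make it explicit. The remaining term $\phi\partial^s q$ is then handled, as you say, by writing $\phi q'=(\phi q)'-\phi'q$ and $\phi q''=(\phi q)''-2\phi'q'-\phi''q$.
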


\begin{proof}
We begin with \eqref{E:L:change}.  By construction,
$$
\int_\R \psi_z^{12}(x) \,dz  \equiv \tfrac{512}{7} \qtq{and so}
	\| \partial^2 (\phi q) \|_{H^{-1}_\vk} \leq \tfrac{7}{512}\int \| \partial^2 (\phi \psi_z^{12} q) \|_{H^{-1}_\vk} \,dz,
$$
by the triangle inequality.

On the other hand, using \eqref{elem mult}, 
\begin{align*}
\| (\phi \psi_z^{12} q)'' \|_{H^{-1}_\vk} &\lesssim \| (\phi\psi_z^6) (\psi_z^6 q)'' \|_{H^{-1}_\vk} + \| (\phi\psi_z^6)' (\psi_z^6 q)' \|_{H^{-1}_\vk} + \| (\phi\psi_z^6)'' \psi_z^6 q \|_{H^{-1}_\vk}  \\
&\lesssim \bigl\| \phi\psi_z^6 \bigr\|_{H^3} \Bigl\{ \| (\psi_z^6 q)'' \|_{H^{-1}_\vk} + \| (\psi_z^6 q)'' \|_{H^{-1}_\vk}^{1/2} \cdot \| q \|_{H^{-1}_\vk}^{1/2} + \| q \|_{H^{-1}_\vk} \Bigr\}.
\end{align*}
The estimate \eqref{E:L:change} now follows by integrating in time and noting that
\begin{equation*}
\int \bigl\| \phi\psi_z^6 \bigr\|_{H^3} \,dz \lesssim_\phi 1.
\end{equation*}

Let us now turn our attention to \eqref{E:L:change'}.  By Plancherel,
\begin{align}\label{change s}
\| \partial^s(\phi q) \|_{H^{-1}_\vk}  \lesssim \vk^{\frac{s-2}3} \Bigl\{ \| \phi q \|_{H^{-1}} + \| (\phi q)'' \|_{H^{-1}_\vk} \Bigr\}
\end{align}
holds pointwise in time.  Thus, by \eqref{elem mult} and \eqref{E:L:change} we have
\begin{align}\label{change s...}
\| \partial^s (\phi q) \|_{L^2_t H^{-1}_\vk} \lesssim_\phi \vk^{\frac{s-2}{3}} \Bigl\{ \| q \|_{L^\infty_t H^{-1}} + \| q \|_{\vLS}  \Bigr\} .
\end{align}
The remaining parts of \eqref{E:L:change'} now follow by writing $\phi q' = (\phi q)' - \phi'q $ and $\phi q'' = (\phi q)'' - 2\phi'q' - \phi''q$.
\end{proof}

The very subtlest parts of our analysis require us to exhibit certain cancellations that appear when performing two commutations in a symmetrical way.  This will be important, for example, in the proofs of Lemmas~\ref{L:2for2} and~\ref{L:cubic}.

\begin{lemma}[Double commutators]\label{L:2xC}
There is a finite collection of operators $\{A_i^{\ },A_i'\}$ satisfying
\begin{equation}\label{A:2xC}
\| A \|_{H_\vk^{-1}\rightarrow H_\vk^{1}} \lesssim 1,
\end{equation}
so that, writing $R_0=R_0(\vk)$ we have
\begin{equation}\label{R0 vs psi sym}
R_0 \psi^{12} R_0 - \psi^6 R_0^2 \psi^6 = \vk^{-2}\sum \psi^6 A_i^{\ } A_i' \psi^6 .
\end{equation}
Moreover, there is another finite collection of operators $\{A_i^{\ },A_i'\}$ satisfying \eqref{A:2xC} so that
\begin{equation}\label{psi vs q vs R0}
\psi^2 R_0 q R_0 \psi^2 = R_0 \psi^4 q R_0 - 4 \vk^2 R_0^2 [\psi^3\psi' q]' R_0^2 - 4\partial R_0^2 [\psi^3\psi' q]' \partial R_0^2  + \vk^{-2}\sum  A_i^{\ } \psi^2 q A_i'.
\end{equation}
\end{lemma}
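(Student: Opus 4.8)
The plan is to derive both identities by systematically expanding the resolvent commutators using Lemma~\ref{L:basic_commutator}, and then organizing the resulting terms into the two buckets allowed by the statement: terms that are manifestly $\vk^{-2}$ times something bounded $H^{-1}_\vk \to H^1_\vk$ (after sandwiching by $\psi^6$, resp. $\psi^2 q$), and — in the case of \eqref{psi vs q vs R0} — a few explicit ``main terms'' that must be exhibited exactly because they are not $O(\vk^{-2})$. Throughout I will use the fact, established in the discussion after \eqref{E:psi defn}, that $w=\psi^m$ satisfies the hypotheses \eqref{E:w hyp} for $1\le m\le 12$, so that Lemma~\ref{L:w commutator} applies; in particular the operator $B$ of \eqref{E:w commutator} is $O(\vk^{-1})$ in operator norm, and $\sqrt{R_0}\,\partial^s\sqrt{R_0}$-type factors convert operator bounds into $H^{-1}_\vk \to H^1_\vk$ bounds via Lemma~\ref{L:HS}.

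For \eqref{R0 vs psi sym}, I would start from the identity $R_0\psi^{12}R_0 - \psi^6 R_0^2\psi^6 = \bigl[R_0,\psi^6\bigr]\psi^6 R_0 - \psi^6 R_0\bigl[\psi^6,R_0\bigr] = [R_0,\psi^6]\,\psi^6 R_0 + \psi^6 R_0 [R_0,\psi^6]$, wait — more cleanly, write $R_0\psi^{12}R_0 = (R_0\psi^6)(\psi^6 R_0)$ and move one $\psi^6$ across each resolvent: $R_0\psi^6 = \psi^6 R_0 + [R_0,\psi^6]$ and similarly on the right. The difference then collapses to $[R_0,\psi^6]\,\psi^6 R_0 + \psi^6 R_0\,[R_0,\psi^6] + [R_0,\psi^6]^2$; by Lemma~\ref{L:basic_commutator} each commutator is $R_0(2\phi'\partial+\phi'')R_0$ with $\phi=\psi^6$, and \eqref{E:del psi} lets me pull out a $\psi^6$ on each side while keeping the leftover derivatives of $\psi$ bounded. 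The factor $\vk^{-2}$ appears because each commutator carries a ``spare'' resolvent: schematically $[R_0,\psi^6]=\psi^6\cdot(\text{bounded}\cdot\sqrt{R_0}\partial\sqrt{R_0}\cdot\sqrt{R_0}\,\vk^{-1}\cdots)$; counting powers, a single commutator between two $R_0$'s contributes the needed $\vk^{-2}$, and products of two commutators are even smaller. Repeatedly commuting $\psi^6$ to the outside and absorbing $\psi$-derivatives into the $A_i,A_i'$ (using \eqref{E:del psi} and \eqref{elem mult}), every term lands in the form $\vk^{-2}\psi^6 A_i A_i'\psi^6$ with $A_i,A_i'$ bounded $H^{-1}_\vk\to H^1_\vk$. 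This direction is essentially bookkeeping.

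For \eqref{psi vs q vs R0}, the same strategy applies but one must be more careful since commuting $\psi^2$ past $R_0 q R_0$ produces a term $R_0 (2(\psi^2)'\partial + (\psi^2)'')R_0 \, q R_0$ (and its mirror) whose leading piece is $O(\vk^{-1})$, not $O(\vk^{-2})$ — this is the source of the explicit correction terms. The plan is: first replace $\psi^2 R_0 = R_0\psi^2 + [\psi^2,R_0]$ on both ends, so that $\psi^2 R_0 q R_0\psi^2 = R_0\psi^4 q R_0 + R_0\psi^2 q\,[R_0,\psi^2] + [\psi^2,R_0]\,\psi^2 q R_0 + [\psi^2,R_0]\,q\,[R_0,\psi^2]$; the last term is $O(\vk^{-2})$ after extracting a $\psi^2 q$-sandwich structure and can be absorbed into $\sum A_i\psi^2 q A_i'$. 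For the two single-commutator terms, use $[R_0,\psi^2] = R_0(2(\psi^2)'\partial+(\psi^2)'')R_0$, note $(\psi^2)' = 2\psi\psi'$ so $\psi^2 q\cdot(\psi^2)' = 2\psi^3\psi' q$ and likewise the other factor of $\psi^2$, and symmetrize: the combination $R_0\psi^2 q R_0(2\partial\cdot 2\psi^3\psi'/\cdots)$ and its mirror should, after an integration by parts in the kernel (i.e. moving $\partial$), reconstitute exactly $-4\vk^2 R_0^2[\psi^3\psi' q]'R_0^2 - 4\partial R_0^2[\psi^3\psi' q]'\partial R_0^2$, up to further $O(\vk^{-2})$ remainders; the $\vk^2$ appears because I have deliberately written the correction with only two ``spare'' resolvents' worth of decay absorbed, leaving $\vk^2\cdot R_0^2$ which is $O(1)$ but whose fluctuation away from the explicit form is $O(\vk^{-2})$.

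The main obstacle will be the second identity: correctly tracking the derivatives that fall on $\psi$ versus on $q$, and verifying that after symmetrizing the two mirror-image single-commutator terms and integrating by parts, the non-$O(\vk^{-2})$ residue is \emph{exactly} the stated combination $-4\vk^2 R_0^2[\psi^3\psi' q]'R_0^2 - 4\partial R_0^2[\psi^3\psi' q]'\partial R_0^2$ and not that plus some other $O(1)$ term. This requires care with the algebra $[R_0,\psi^2] = R_0(2\psi\psi'\partial + \partial\, 2\psi\psi')R_0$ (the third form in Lemma~\ref{L:basic_commutator}, symmetric in $\partial$ and $\phi'$) and with how $\vk^2 R_0 = \Id - \partial^2 R_0$ is used to trade a resolvent for the identity plus $\partial^2 R_0$ — it is precisely this trade that generates both the $\vk^2 R_0^2[\cdots]'R_0^2$ piece and the $\partial R_0^2[\cdots]'\partial R_0^2$ piece from a single commutator, with the discrepancy being a genuinely lower-order $O(\vk^{-2})$ term. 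Once the explicit terms are peeled off, bounding the remainders is routine: every leftover factor has at least two resolvents' worth of smoothing to spare after accounting for one derivative, so Lemma~\ref{L:w commutator}, Lemma~\ref{L:HS}, \eqref{E:del psi}, and \eqref{elem mult} close the estimate \eqref{A:2xC}.
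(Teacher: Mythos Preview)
Your plan for \eqref{psi vs q vs R0} is essentially correct and matches the paper's approach, though where you speak vaguely of ``symmetrizing and integrating by parts'' the paper invokes a clean algebraic identity
\[
\partial R_0 f - f\partial R_0 = \vk^2 R_0 f' R_0 + \partial R_0 f'\partial R_0,
\]
which shows that the $O(\vk^{-1})$ residual, after subtracting the two explicit correction terms, vanishes \emph{identically}. You should expect to need this identity (or an equivalent Fourier computation) rather than a soft argument.

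Your treatment of \eqref{R0 vs psi sym}, however, has a genuine gap. First, the algebra is off: writing $C=[R_0,\psi^6]$ and $R_0\psi^6=\psi^6 R_0+C$, one finds
\[
R_0\psi^{12}R_0 - \psi^6 R_0^2\psi^6 = C\,\psi^6 R_0 - \psi^6 R_0\, C,
\]
a genuine commutator (with a minus sign, and no $C^2$ term). More importantly, your claim that ``a single commutator between two $R_0$'s contributes the needed $\vk^{-2}$'' is false. A single commutator $C$ carries a factor $\partial R_0^2$ whose $H^{-1}_\vk\to H^1_\vk$ norm is only $O(\vk^{-1})$; thus each of $C\psi^6 R_0$ and $\psi^6 R_0 C$, after extracting $\psi^6$ on both ends, is only of size $\vk^{-1}$ in the factored sense --- not $\vk^{-2}$. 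Repeatedly commuting $\psi^6$ to the outside does not improve this, since moving $\psi$-factors around gains no powers of $\vk$.

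What saves the day is precisely the \emph{difference} structure: the leading $O(\vk^{-1})$ parts of the two terms cancel. The paper makes this explicit by expanding $R_0\psi^6=\psi^6 R_0 + 12\psi^5\psi'\partial R_0^2 + \vk^{-2}\psi^6 A$ (and its adjoint), multiplying out, and then showing that the surviving cross terms
\[
12\psi^5\psi'\partial R_0^3\psi^6 - 12\psi^6\partial R_0^3\psi^5\psi'
= 12\psi^6\Bigl[-\bigl(\tfrac{\psi'}{\psi}\bigr)'R_0^3 - \partial\bigl[R_0^3,\tfrac{\psi'}{\psi}\bigr]\Bigr]\psi^6
\]
are themselves a commutator, hence $O(\vk^{-2})$. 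This second cancellation --- the ``double commutator'' that gives the lemma its name --- is the missing idea in your argument for the first identity.
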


\begin{proof}
We begin with two basic operator identities
\begin{gather}
R_0 f = f  R_0 + 2f' \partial R_0^2 + R_0 f'' ( 3\partial^2+\vk^2)R_0^2 + 2 R_0 f''' \partial R_0^2 \label{ci1}\\
\partial R_0 f - f \partial R_0 = \vk^2 R_0 f' R_0 + \partial R_0 f' \partial R_0 \label{ci2}
\end{gather}
valid for any smooth function $f$.  These can be verified by iterating Lemma~\ref{L:basic_commutator}, or by writing the corresponding integral kernels in Fourier variables.  For example, \eqref{ci2} corresponds to
$$
\bigl[\tfrac{i\xi}{\xi^2+\vk^2} - \tfrac{i\eta}{\eta^2+\vk^2}\bigr] \hat f(\xi-\eta)
	= \tfrac{i(\vk^2-\xi\eta)(\xi-\eta)}{(\xi^2+\vk^2)(\eta^2+\vk^2)} \hat f(\xi-\eta).
$$

Combining \eqref{ci1} with Lemma~\ref{L:w commutator} and \eqref{E:del psi} shows
\begin{align}
R_0 \psi^6 = \psi^6  R_0 + 12 \psi^5\psi' \partial R_0^2 + \vk^{-2} \psi^6 A \qtq{with} \| A \|_{H_\vk^{-1} \rightarrow H_\vk^1} \lesssim 1. \label{R0psi}
\end{align}
By taking adjoints (in the $L^2\to L^2$ sense), we deduce that
\begin{align}
\psi^6 R_0  = R_0 \psi^6 - 12 \partial R_0^2 \psi^5\psi'  + \vk^{-2} A^{*} \psi^6 \qtq{with} \| A^* \|_{H_\vk^{-1} \rightarrow H_\vk^1} \lesssim 1. \label{psiR0}
\end{align}

Multiplying out \eqref{R0psi} and \eqref{psiR0} brings us very close to proving \eqref{R0 vs psi sym}; the only terms not conforming to the desired representation in an obvious way are
\begin{align*}
12 \psi^5\psi' \partial R_0^3 \psi^6 - 12 \psi^6  \partial R_0^3 \psi^5\psi'
	&= 12 \psi^6 \Bigl[\tfrac{\psi'}{\psi} \partial R_0^3 -  \partial R_0^3 \tfrac{\psi'}{\psi} \Bigr] \psi^6 \\
	&= 12 \psi^6 \Bigl[ - \bigl(\tfrac{\psi'}{\psi}\bigr)' R_0^3 -  \partial \bigl[R_0^3, \tfrac{\psi'}{\psi}\bigr] \Bigr] \psi^6.
\end{align*}
To handle this, we write $[R_0^3, \phi]=R_0^2 [R_0, \phi]+ R_0 [R_0, \phi]R_0 + [R_0, \phi]R_0^2$ and apply Lemmas~\ref{L:basic_commutator} and~\ref{L:w commutator}.

To prove \eqref{psi vs q vs R0}, we begin with the following analogue of \eqref{R0psi}:
\begin{align}
R_0 \psi^2 = \psi^2  R_0 + 4 \psi\psi' \partial R_0^2 + \vk^{-2} \psi^2 A \qtq{with} \| A \|_{H_\vk^{-1} \rightarrow H_\vk^1} \lesssim 1. \label{R0psi'}
\end{align}

Let us multiply out LHS\eqref{psi vs q vs R0} using \eqref{R0psi'} and its adjoint.  When we compare the result with RHS\eqref{psi vs q vs R0}, we find ourselves left to represent
\begin{align*}
- 4 \partial R_0^2 \psi^3 \psi'  q R_0 +4 R_0 \psi^3\psi' q  \partial R_0^2
	+ 4\vk^2 R_0^2 [\psi^3\psi' q]' R_0^2 + 4\partial R_0^2 [\psi^3\psi' q]' \partial R_0^2
\end{align*}
in the form $\sum A_i \psi^2 q A_i'$.  Actually, this term is even better; \eqref{ci2} shows that it is zero.
\end{proof}

\section{Paraproducts}\label{S:para}

The initial thrust of this section is to develop basic estimates on the diagonal Green's function in terms of the local smoothing norm.  In view of the series \eqref{E:g defn}, this amounts to the discussion of the paraproducts $h_\ell$.  Later (beginning with Lemma~\ref{L:g up}), we treat certain nonlinear combinations of these terms that arise naturally in our analysis. 

Although the paraproducts that follow fit the mold of the Coifmann--Meyer theory, their symbols rapidly become so complicated as to render that approach untenable.  Rather, we employ a method that synthesizes the traditional Littlewood--Paley techniques with trace-ideal technology.

Commutation will also play a major role, because we will need to obtain localizing factors next to every copy of $q$ in the expressions \eqref{E:h defn} in order to employ the local smoothing norm.  We can then break into Littlewood--Paley pieces and deploy our basic trace-ideal estimates, such as,
\begin{align}
\bigl\|\sqrt{R_0(\vk)} f_N \sqrt{R_0(\vk)}\bigr\|_{\I_2} &\lesssim \min\{\vk^{-\frac32}N, \vk^{-\frac12}N^{-2} \} \bigl\{ \|f\|_{H^{-1}} + \bigl\|\partial^2 f\bigr\|_{H^{-1}_\vk} \bigr\}, \label{est HS}
\end{align}
which follows from Lemma \ref{L:HS}, and
\begin{align}
\bigl\|\sqrt{R_0(\vk)} f_N \sqrt{R_0(\vk)}\bigr\|_{\I_\infty} &\lesssim \min\{\vk^{-2}N^{\frac32}, \vk^{-\frac12} \} \|f\|_{H^{-1}}, \label{est op}
\end{align}
which also utilizes the Bernstein estimate $\|f_N\|_{L^\infty}\lesssim N^{3/2} \|f\|_{H^{-1}}$.

Evidently, these inequalities incorporate two distinct modes of estimation and ensure that we employ the optimal one for each pair $N,\vk$.  Experience has shown us that this optimization, as well as the proper assignment of these estimates (based on frequency comparisons), is essential in order to obtain sufficient decay (as $\vk\to\infty$) to complete the analysis in the sections that follow.

\begin{proposition}\label{P:X_l thingy}
Given $2\leq\ell\leq 4$, a collection of operators $A_1,\ldots,A_{\ell+1}$ obeying
\begin{equation}\label{op As hyp}
\| A_j \|_{H_\vk^{-1}\rightarrow H_\vk^{1}} \leq 1,
\end{equation}
a collection of Schwartz functions $\phi_1,\ldots,\phi_\ell$, and $q:[-1,1]\to B_\delta$, we write
\begin{equation}\label{Fell form}
F_\ell(t) := A_{1} \phi_{1} q(t) A_{2} \phi_{2} q(t) A_3 \hdots A_\ell \phi_{\ell} q(t) A_{\ell+1} .
\end{equation}
Then
\begin{align}\label{E:X_l thingy}
\bigl\| F_\ell \bigr\|_{L^1_t\I_1} 
\lesssim \vk^{-\frac{4}{3}-\frac{3}{2}\ell} \delta^{\ell-2} \Bigr\{ \delta^2 + \|q\|_{\vLS}^2 \Bigr\} .
\end{align}
The implicit constant depends on $\phi_1,\ldots,\phi_\ell$, but is independent of $\vk\geq 1$.
\end{proposition}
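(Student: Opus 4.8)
The plan is to estimate $\|F_\ell(t)\|_{\I_1}$ pointwise in time by inserting a resolution of the identity $1 = \sum_N (\phi_j q)_N$ at each of the $\ell$ factors $\phi_j q$, thereby writing $F_\ell(t)$ as a sum over frequency tuples $(N_1,\ldots,N_\ell)$, and then bounding each summand using H\"older's inequality in the Schatten classes together with the two workhorse estimates \eqref{est HS} and \eqref{est op}. The first step is cosmetic but important: using the factorization $A_j = A_j \sqrt{R_0} \cdot \sqrt{R_0}^{-1}$ one rewrites each block $A_j \phi_j q A_{j+1}$ so that each copy of $\phi_j q$ is flanked by $\sqrt{R_0}$'s, i.e. as $(\text{bounded on }L^2) \cdot \sqrt{R_0}(\phi_j q)_{N_j}\sqrt{R_0} \cdot (\text{bounded on }L^2)$; the hypothesis \eqref{op As hyp} that $A_j\colon H^{-1}_\vk \to H^1_\vk$ with norm $\le 1$ is precisely what makes the operators $(-\partial^2+\vk^2)^{1/2} A_j (-\partial^2+\vk^2)^{1/2}$, and hence the sandwiched versions $\sqrt{R_0}^{-1} A_j \sqrt{R_0}^{-1}$ combined appropriately, bounded on $L^2$ with constant $\lesssim 1$. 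After this reduction, $F_\ell(t)$ is a finite sum of operators of the form $\prod_{j=1}^{\ell} B_j \sqrt{R_0}(\phi_j q)_{N_j}\sqrt{R_0}$ with $\|B_j\|_{\op}\lesssim 1$, modulo harmless reshuffling of the outer $A_1, A_{\ell+1}$.

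The core estimate is then: apply the Schatten H\"older inequality $\|X_1 \cdots X_\ell\|_{\I_1} \le \prod \|X_j\|_{\I_{p_j}}$ with $\sum 1/p_j = 1$, assigning $p_j = 2$ to exactly two of the factors and $p_j = \infty$ to the remaining $\ell - 2$. The assignment is not arbitrary: for each frequency tuple one assigns the two $\I_2$ slots to the two \emph{largest} frequencies $N_j$, and $\I_\infty$ to the rest. For the $\I_2$ factors we use \eqref{est HS}, for the $\I_\infty$ factors \eqref{est op}; in both, the quantity $\|\phi_j q\|_{H^{-1}} + \|\partial^2(\phi_j q)\|_{H^{-1}_\vk}$ appears, which by Lemma~\ref{L:change} (specifically \eqref{E:L:change}) is $\lesssim_\phi \|q\|_{\vLS} + \|q\|_{L^\infty_t H^{-1}_\vk}$, and since $q(t)\in B_\delta$ uniformly, the $L^\infty_t H^{-1}_\vk$ part is $\lesssim \delta$. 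Thus each factor contributes a $\vk$-and-$N$-dependent prefactor times either $\{\|q\|_{\vLS} + \delta\}$ (for the two big-frequency slots) or $\delta$ (for the small ones); multiplying, one gets the factor $\delta^{\ell-2}\{\delta^2 + \|q\|_{\vLS}^2\}$ after crudely bounding $(\|q\|_{\vLS}+\delta)^2 \lesssim \|q\|_{\vLS}^2 + \delta^2$. It then remains to integrate in $t$ (the $\I_2$-norms carry the $L^2_t$ integrability, so H\"older in time gives $L^1_t$) and to sum the purely numerical series $\sum_{(N_1,\ldots,N_\ell)}(\text{min-expressions in }\vk, N)$ and check it is $\lesssim \vk^{-4/3 - 3\ell/2}$.

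The main obstacle — and the only genuinely delicate point — is verifying that the frequency sum produces exactly the claimed power $\vk^{-4/3-3\ell/2}$ and, in particular, that the sum converges. This is where the min-structure of \eqref{est HS} and \eqref{est op} is essential: \eqref{est HS} gives $\vk^{-3/2}N$ for low $N$ and $\vk^{-1/2}N^{-2}$ for high $N$ (crossover near $N\sim\vk$), and \eqref{est op} likewise interpolates between $\vk^{-2}N^{3/2}$ and $\vk^{-1/2}$. One organizes the sum by which frequency is largest (say $N_1$, by symmetry, up to a combinatorial factor $\ell$), assigns both $\I_2$ slots using the two largest frequencies, and sums geometrically: the low-frequency regime of each factor contributes positive powers of $N_j$ that sum to a constant once capped by the largest frequency, while the high-frequency tail contributes negative powers that sum absolutely; the borderline logarithm at $N_j\sim\vk$ is absorbed by giving up an arbitrarily small power of $\vk$, which is affordable since we have room (the target exponent is far from sharp). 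A careful bookkeeping of the exponents — two factors of $\vk^{-1/2}N^{-2}$-type or $\vk^{-3/2}N$-type for the $\I_2$ slots, $\ell-2$ factors of $\vk^{-1/2}$-type (the $\I_\infty$ estimate's better branch) for the rest — yields $\vk^{-1/2}\cdot\vk^{-1/2}\cdot \vk^{-(\ell-2)/2}$ from the ``easy'' branches, and extracting the extra $\vk^{-4/3}$ and the remaining powers requires using the $\partial^2$-improved branch of \eqref{est HS} (the $\vk^{\frac{s-2}{3}}$ gain in Lemma~\ref{L:change}) on at least one factor; I would track this via the same $\vk^{1/3}$-per-derivative trade-off that appears in \eqref{change s}. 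Everything else — the operator-norm reductions, the Schatten H\"older step, the time integration — is routine given the lemmas already established.
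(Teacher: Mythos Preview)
Your approach is exactly that of the paper: factor each $A_j = \sqrt{R_0}\,B_j\,\sqrt{R_0}$ with $\|B_j\|_{\op}\lesssim 1$, decompose each $\phi_j q$ into Littlewood--Paley pieces, apply Schatten--H\"older with the two highest frequencies in $\I_2$ (via \eqref{est HS}) and the rest in $\I_\infty$ (via \eqref{est op}), and then sum the resulting numerical series in $N_1,\dots,N_\ell$.

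Two points in your accounting need correction. First, the ``harmless reshuffling of the outer $A_1,A_{\ell+1}$'' is not harmless: after the factorization you have
\[
F_\ell = \sqrt{R_0}\,B_1 \bigl[\sqrt{R_0}\,\phi_1 q\,\sqrt{R_0}\bigr] B_2 \cdots B_\ell \bigl[\sqrt{R_0}\,\phi_\ell q\,\sqrt{R_0}\bigr] B_{\ell+1}\sqrt{R_0},
\]
and the two outer copies of $\sqrt{R_0}$ contribute an essential factor $\|\sqrt{R_0}\|_{\op}^2\lesssim\vk^{-2}$ that you have dropped. Second, your explanation of the remaining $\vk$-decay is muddled: there is no further ``$\partial^2$-improved branch'' to invoke beyond what is already encoded in the $\min$ of \eqref{est HS}. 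The paper simply carries out the frequency sum explicitly --- the crossovers are at $N\sim\vk^{1/3}$ for the $\I_2$ factors and $N\sim\vk$ for the $\I_\infty$ factors --- and after relabeling so that $N_1\geq\cdots\geq N_\ell$, summing first over $N_1$ (yielding $\min\{\vk^{-7/6},\vk^{-1/2}N_2^{-2}\}$), then over $N_3,\dots,N_\ell$ (yielding $\min\{\vk^{-1/2}\log(2+N_2/\vk),\vk^{-2}N_2^{3/2}\}^{\ell-2}$), and finally splitting the $N_2$ sum at $\vk^{1/3}$ and $\vk$, one finds the sum is $\lesssim\vk^{2/3-3\ell/2}$. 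Combined with the $\vk^{-2}$ from the outer resolvents, this gives the claimed $\vk^{-4/3-3\ell/2}$.
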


\begin{proof}
We begin by noting that \eqref{op As hyp} guarantees that
$$
A_j = \sqrt{R_0(\vk)}  B_j \sqrt{R_0(\vk)} \qtq{with} \| B_j \|_\op \lesssim 1.
$$
Decomposing all $\phi_j q$ into their Littlewood--Paley pieces and using H\"older's inequality in Schatten classes, we deduce that
\begin{align*}
\| F_\ell \|_{L^1_t \I_2} \lesssim \sum_{N_1,\ldots,N_\ell}\bigl\| \sqrt{R_0(\vk)} \bigr\|_\op^2
	\prod_{j=1}^\ell \bigl\| \sqrt{R_0(\vk)} (\phi_j q)_{N_j} \sqrt{R_0(\vk)} \bigr\|_{L^{p_j}_t\mathfrak{I}_{p_j}}
\end{align*}
provided $\sum 1/p_j =1$.  Actually, we only need a simple form of H\"older's inequality, namely, when two of the exponents are $2$ and all others are $\infty$.  However, it is important that we place the two highest frequency terms in $\I_2$, while the remaining terms we place in operator norm.  Concretely, we use
\begin{align}
\bigl\| \sqrt{R_0(\vk)} (\phi q)_N \sqrt{R_0(\vk)} \bigr\|_{L^2_t \I_2}&\lesssim_\phi \min\{\vk^{-\frac32}N, \vk^{-\frac12}N^{-2} \}\bigl[ \delta + \|q\|_{\vLS} \bigr], \label{est HS'}\\
\bigl\| \sqrt{R_0(\vk)} (\phi q)_N \sqrt{R_0(\vk)} \bigr\|_{L^\infty_t \I_\infty}   &\lesssim_\phi \min\{\vk^{-2}N^{\frac32}, \vk^{-\frac12} \} \delta, \label{est op'}
\end{align}
which follow from \eqref{est HS} and \eqref{est op} by applying \eqref{elem mult} and Lemma~\ref{L:change}.  In this way, matters reduce to controlling the sum over frequencies, which we relabel so that they are ordered.  We are led to bound
\begin{align*}
&\sum_{N_1 \geq \cdots \geq N_\ell \geq 1} \ \prod_{j=1}^2 \min \{ \vk^{-\frac{3}{2}} N_j , \vk^{-\frac{1}{2}} N_j^{-2}\}
	\prod_{j=3}^\ell \min\{ \vk^{-\frac{1}{2}}, \vk^{-2} N_j^{\frac{3}{2}} \}\\
&\lesssim \sum_{N_1 \geq N_2 } \Big( \prod_{j=1}^2 \min\{ \vk^{-\frac{3}{2}} N_j , \vk^{-\frac{1}{2}} N_j^{-2} \} \Big) \min \{ \vk^{-\frac{1}{2}}\log( 2+ \tfrac{N_2}{\vk}), \vk^{-2} N_2^{\frac{3}{2}} \}^{\ell-2}\\
&\lesssim \sum_{N_2} \min\{ \vk^{-\frac{7}{6}}, \vk^{-\frac{1}{2}} N_2^{-2}\} \min\{ \vk^{-\frac{3}{2}} N_2 , \vk^{-\frac{1}{2}} N_2^{-2} \}\min \{ \vk^{-\frac{1}{2}}\log( 2+ \tfrac{N_2}{\vk}), \vk^{-2} N_2^{\frac{3}{2}} \}^{\ell-2}  \\
&\lesssim \sum_{1\leq N_2 \leq \vk^{\frac{1}{3}}} \!\!\! \vk^{\frac{4}{3}-2\ell} N_2^{-2+\frac{3}{2}\ell}
	+ \!\!\! \sum_{\vk^{\frac{1}{3}}\leq N_2 \leq \vk} \!\!\! \vk^{3-2\ell} N_2^{-7+\frac{3}{2}\ell}
	+ \sum_{N_2\geq \vk} \vk^{-\frac{\ell}{2}} N_2^{-4}\log(2 + \tfrac{N_2}{\vk})^{\ell-2} \\
&\lesssim \vk^{\frac{2}{3}-\frac{3}{2}\ell} + \vk^{-4-\frac{\ell}{2}} \lesssim \vk^{\frac{2}{3}-\frac{3}{2}\ell}
\end{align*}
and the result follows.  Evidently, the result can be extended to $\ell\geq 5$, albeit with a different power of $\vk$.
\end{proof}

\begin{corollary}\label{C:h in L^1}
Fix $\phi\in\Schw(\R)$.  For any $q:[-1,1]\to B_\delta$ and $\vk\geq 1$,
\begin{align}
\| \phi h_3 \|_{L^1_{t,x}} &\lesssim \vk^{-5-\frac{5}{6}} \delta \bigr\{ \delta^2 + \|q\|_{\vLS}^2 \bigr\} ,\label{eq:commutator_cubic_f}\\
\sum_{\ell\geq 4} \| \phi h_\ell \|_{L^1_{t,x}} &\lesssim 	\vk^{-7-\frac{1}{3}} \delta^{2} \bigr\{ \delta^2 + \|q\|_{\vLS}^2 \bigr\}. \label{eq:commutator_quartic_f}
\end{align}
\end{corollary}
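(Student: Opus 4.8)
The plan is to realize both estimates as instances of Proposition~\ref{P:X_l thingy} after reducing the pointwise $L^1_{t,x}$ norm of $\phi h_\ell$ to a trace-class norm. First recall from the definition \eqref{E:h defn} that $h_\ell(x) = (-1)^\ell \langle \sqrt{R_0}\delta_x, (\sqrt{R_0}q\sqrt{R_0})^\ell \sqrt{R_0}\delta_x\rangle$, so that $\phi(x) h_\ell(x)$ is (up to sign) the diagonal kernel of the operator $\phi^{1/2} \sqrt{R_0} (\sqrt{R_0} q \sqrt{R_0})^\ell \sqrt{R_0} \phi^{1/2}$ — or, more conveniently, we simply note that for a trace-class operator $T$ with continuous kernel, $\int |\phi(x)|\,|T(x,x)|\,dx \le \|\, \sqrt{|\phi|}\, T\, \sqrt{|\phi|}\,\|_{\I_1}$ after absorbing the sign of $\phi(x)$ into a unimodular multiplication operator. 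Thus it suffices to bound $\|\sqrt{|\phi|}\sqrt{R_0}\,(\sqrt{R_0}q\sqrt{R_0})^\ell\,\sqrt{R_0}\sqrt{|\phi|}\|_{L^1_t\I_1}$, where $\sqrt{|\phi|}\in\Schw$ since $\phi\in\Schw$ (choosing a nonnegative Schwartz majorant of $|\phi|$ if necessary, which only costs a constant).

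The second step is to commute a localizing weight next to each copy of $q$. Between consecutive factors of $q$ sits a $\sqrt{R_0}\cdot\sqrt{R_0}=R_0$, and at the two ends we have $\sqrt{|\phi|}\sqrt{R_0}$. The point is to rewrite the operator in the form \eqref{Fell form}: bracket $\sqrt{R_0}$'s into operators $A_j$ of the type appearing in Proposition~\ref{P:X_l thingy} (those satisfy $\|A_j\|_{H^{-1}_\vk\to H^1_\vk}\lesssim 1$, which is exactly $\|\,\sqrt{R_0}B_j\sqrt{R_0}\,\|$ with $\|B_j\|_\op\lesssim 1$ as used in the proof there), and use \eqref{elem mult} together with the rapid decay of $\phi$ to insert Schwartz cutoffs $\phi_i$ flanking each $q$. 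Concretely, $\sqrt{R_0}$ itself has $\|\sqrt{R_0}\|_{H^{-1}_\vk\to H^1_\vk}\lesssim\vk^{-1}\lesssim 1$, and the two end pieces $\sqrt{|\phi|}\sqrt{R_0}$ are likewise admissible $A$'s. With $\ell$ copies of $q$, $\ell+1$ such operators appear, matching the structure of \eqref{Fell form} exactly.

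Once in this form, Proposition~\ref{P:X_l thingy} applies directly for $\ell=3$ and $\ell=4$, giving $\|\phi h_\ell\|_{L^1_{t,x}}\lesssim \vk^{-\frac43-\frac32\ell}\delta^{\ell-2}\{\delta^2+\|q\|_{\vLS}^2\}$, i.e. $\vk^{-\frac43-\frac92}=\vk^{-\frac{35}{6}}=\vk^{-5-\frac56}$ for $\ell=3$, and $\vk^{-\frac43-6}\delta^2=\vk^{-\frac{22}{3}}\delta^2=\vk^{-7-\frac13}\delta^2$ for $\ell=4$. This already yields \eqref{eq:commutator_cubic_f} and the $\ell=4$ term of \eqref{eq:commutator_quartic_f}. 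For the tail $\sum_{\ell\geq 5}$, as the remark at the end of Proposition~\ref{P:X_l thingy} indicates, the same argument runs with a different (more negative) power of $\vk$; one extracts $\delta^{\ell-4}$ from $\ell-4$ of the $q$-factors in operator norm via \eqref{est op'} — each contributing a bounded factor $\lesssim\vk^{-1/2}\delta$ — so the series sums to a geometric series in $\delta\vk^{-1/2}\ll 1$, dominated by $\vk^{-7-\frac13}\delta^2\{\delta^2+\|q\|_{\vLS}^2\}$. The main obstacle is bookkeeping in the second step: one must check that after commuting weights past the $R_0$'s (which, by Lemma~\ref{L:w commutator} and the identities in Lemma~\ref{L:2xC}, generates only terms with the same or better $\vk$-decay and extra Schwartz factors), every resulting term still fits the template \eqref{Fell form} with $A_j$'s obeying \eqref{op As hyp} up to a harmless constant — and that the constant blowup from the finitely many commutator terms is controlled uniformly in $\vk$, which it is because each commutator gains a power of $\vk^{-1}$ or $\vk^{-2}$.
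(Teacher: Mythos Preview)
Your reduction to a trace-class estimate is sound: indeed $\int |\phi(x)\,K(x,x)|\,dx \le \|\sqrt{|\phi|}\,T\,\sqrt{|\phi|}\|_{\I_1}$ holds for any trace-class $T$ with continuous kernel $K$, by the singular-value decomposition and Cauchy--Schwarz.  The trouble comes in the ``second step''.

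First, a false statement: $\sqrt{R_0}$ is \emph{not} bounded $H^{-1}_\vk\to H^1_\vk$; it gains only one derivative, so it maps $H^{-1}_\vk\to L^2$.  This is not fatal, since between consecutive $q$'s you actually have a full $R_0$, and the end pieces are $\sqrt{|\phi|}\,R_0$ and $R_0\,\sqrt{|\phi|}$, which are admissible.  But what you then obtain is an expression of the form $A_1\, q\, A_2\, q\cdots q\, A_{\ell+1}$ with \emph{no} Schwartz cutoffs $\phi_j$ next to the $q$'s.  Proposition~\ref{P:X_l thingy} requires those cutoffs: its proof feeds $(\phi_j q)_N$ into \eqref{est HS'}, and that estimate uses $\|q\|_{\vLS}$, which is intrinsically local.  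Saying ``use \eqref{elem mult} together with the rapid decay of $\phi$ to insert Schwartz cutoffs'' is precisely the step that needs an argument, and your symmetric splitting $\phi=\sqrt{|\phi|}\cdot\sqrt{|\phi|}$ does not provide one.  The difficulty is that the localization sits outside $\ell+1$ resolvents, while you need a piece of it next to each $q$; \eqref{elem mult} alone does not transport weights across $R_0$.

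The paper handles this differently in three ways.  First, rather than a symmetric splitting, it uses the duality $\|\phi h_\ell\|_{L^1_{t,x}}=\sup_{\|f\|_{L^\infty}\le 1}\int\!\!\int f\phi\, h_\ell\,dx\,dt = \sup_f \int \Tr\{f\phi\, R_0(qR_0)^\ell\}\,dt$, so that all of $\phi$ sits on one side and $f$ is harmless in operator norm.  Second, it reduces via a partition of unity to the case $\phi=\psi^\ell$, so that $\phi$ factors as a product of $\ell$ copies of $\psi$.  Third --- and this is the mechanism you are missing --- it \emph{peels off} one $\psi$ at a time via Lemma~\ref{L:w commutator}:
\[
\psi^\ell R_0\, q\, R_0\, q \cdots q\, R_0
= \bigl[\psi^\ell R_0 \tfrac{1}{\psi^\ell}\bigr]\psi q\,\bigl[\psi^{\ell-1}R_0\tfrac{1}{\psi^{\ell-1}}\bigr]\psi q\cdots \bigl[\psi R_0\tfrac{1}{\psi}\bigr]\psi q\, R_0,
\]
which now has exactly the shape $A_1\,\psi q\, A_2\,\psi q\cdots\psi q\, A_{\ell+1}$ with each $A_j$ satisfying \eqref{op As hyp}.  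This is what makes Proposition~\ref{P:X_l thingy} applicable.

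For the tail $\ell\ge 5$, your idea of stripping off $\ell-4$ factors in operator norm is along the right lines, but as written it has the same gap: the four remaining factors still need localization.  The paper instead cycles the trace so that $\psi^4=\psi^2\cdot\psi^2$ splits symmetrically about $f$; for $\ell=4$ the resulting expression is positive, so $f\equiv 1$ is the maximizer and one obtains $\|\psi^4 h_4\|_{L^1_{t,x}}=\|(\sqrt{R_0}q\sqrt{R_0})^2\sqrt{R_0}\,\psi^2\|_{L^2_t\I_2}^2$.  For $\ell\ge 5$ one then bounds by this quantity times $\|\sqrt{R_0}q\sqrt{R_0}\|_{L^\infty_t\I_\infty}^{\ell-4}\lesssim \delta^{\ell-4}$, which sums.
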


\begin{proof}
Our proof rests on the following representation 
\begin{align}\label{hell as trace}
\| \phi h_\ell \|_{L^1_{t,x}} &= \sup_{f} \  \int_{-1}^1 \Tr\Bigl\{f \phi R_0 \bigl(q R_0\bigr)^\ell \Bigr\} \,dt
\end{align}
where the supremum is over $f\in L^\infty([-1,1]\times \R)$ of unit norm.  By the same partition of unity argument exhibited in the proof of Lemma~\ref{L:change}, we see that it suffices to assume that $\phi$ is some positive power of $\psi$.  For example, if we prove \eqref{eq:commutator_cubic_f} with $\phi=\psi^3$, it then follows that it holds uniformly for translates (due to the uniformity under translation of $q$) and finally, we have
$$
\| \phi h_3 \|_{L^1_{t,x}} \leq \tfrac{512}{7} \| \phi \psi_z^9 \|_{L^1_z L^\infty_{t,x}} \| \psi_z^3 h_3  \|_{L^\infty_z L^1_{t,x}}
	\lesssim_\phi \vk^{-5-\frac{5}{6}} \delta \bigr\{ \delta^2 + \|q\|_{\vLS}^2 \bigr\} .
$$

Beginning with \eqref{eq:commutator_cubic_f}, we write
\begin{equation}\label{11:51}
\psi^3 R_0 q R_0 q R_0 q R_0 = \bigl[\psi^3 R_0 \tfrac{1}{\psi^3}\bigr] \psi q \bigl[\psi^2 R_0 \tfrac{1}{\psi^2}\bigr]  \psi q \bigl[\psi R_0 \tfrac{1}{\psi}\bigr] \psi q \bigl[ R_0 \bigr].
\end{equation}
Notice that by Lemma~\ref{L:w commutator}, each operator in square brackets is bounded as a mapping $H_\vk^{-1}\rightarrow H_\vk^{1}$.  In this way, \eqref{eq:commutator_cubic_f} follows from  Proposition~\ref{P:X_l thingy}.

Analogously, the $\ell=4$ term in \eqref{eq:commutator_quartic_f} follows from
$$
\psi^{4} R_0 (q R_0)^3 = \bigl[ \psi^{4} R_0 \tfrac{1}{\psi^{4}}\bigr] \psi q \bigl[\psi^3 R_0 \tfrac{1}{\psi^3}\bigr] \psi q \bigl[\psi^2 R_0 \tfrac{1}{\psi^2}\bigr]
		\psi q \bigl[\psi R_0 \tfrac{1}{\psi}\bigr] \psi q \bigl[ R_0 \bigr].
$$

It remains to treat $\ell>4$, which we shall do by reducing consideration to the case $\ell=4$.  Cycling the trace in \eqref{hell as trace} yields
\begin{align}\label{h ell as trace'}
\!\!\Tr\Bigl\{\!\:\!f \psi^4 R_0 \bigl(q R_0\bigr)^\ell \Bigr\} &=  \Tr\Bigl\{\!\:\!\bigl(\!\sqrt{R_0} q \sqrt{R_0}\,\bigr)^{2} \!\sqrt{R_0} \,\psi^{2} f \psi^{2} \!\sqrt{R_0}
	\bigl(\!\sqrt{R_0} q \sqrt{R_0}\,\bigr)^{l-2} \Bigr\}.
\end{align}
When $\ell=4$, the operators to the left and right of $f$ are adjoints of one another.  Therefore, the trace is monotone in $f$ and so maximized by taking $f\equiv 1$.  Thus,
\begin{align*}
\| \psi^4 h_4 \|_{L^1_{t,x}} &= \bigl\| \bigl(\sqrt{R_0} q \sqrt{R_0}\bigr)^{2} \sqrt{R_0} \, \psi^{2} \bigr\|_{L^2_t \I_2}^2 .
\end{align*}

Returning to \eqref{h ell as trace'} for general $\ell\geq 5$, we then deduce that
\begin{align*}
\| \psi^4 h_\ell \|_{L^1_{t,x}} &\leq  \bigl\| \bigl(\sqrt{R_0} q \sqrt{R_0}\bigr)^{2} \sqrt{R_0}\, \psi^{2} \bigr\|_{L^2_t\I_2}^2 \bigl\| \sqrt{R_0} q \sqrt{R_0} \bigr\|_{L^\infty_t \mathfrak{I}_\infty}^{\ell-4} \leq \| \psi^{4} h_4 \|_{L^1_{t,x}}  \delta^{\ell-4}.
\end{align*}
As $\delta\ll 1$, this can be summed to yield \eqref{eq:commutator_quartic_f}.
\end{proof}

Our next proposition and corollary are close analogues of the previous ones; however, we now estimate the $H^1_\vk$ norm and (more importantly) only use one copy of the local smoothing norm to do so.

\begin{proposition}\label{P:Y_l thingy}
Given $\ell\geq 2$, a collection of operators $A_1,\ldots,A_{\ell+1}$ obeying \eqref{op As hyp}, a collection of Schwartz functions $\phi_1,\ldots,\phi_\ell$, and $f,q:[-1,1]\to B_\delta$, we write
$$
F_\ell(t) := \sqrt{R_0(\vk)} \, f(t) A_{1} \phi_{1} q(t) A_{2} \phi_{2} q(t) A_3 \hdots A_\ell \phi_{\ell} q(t) \sqrt{R_0(\vk)} .
$$
Then 
\begin{align}\label{E:Y_l thingy}
\| F_\ell \|_{L^1_t \I_1} \lesssim  \vk^{-\frac\ell2-\frac{13}6} \delta^{\ell-1} \bigr\{ \delta + \| q\|_{\vLS} \bigr\}  \| f\|_{L^2_t H^{-1}_\vk}.
\end{align}
The implicit constant is independent of $\vk\geq 1$.
\end{proposition}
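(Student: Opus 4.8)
The plan is to follow the proof of Proposition~\ref{P:X_l thingy} closely; the one structural novelty is that here only a \emph{single} copy of the local smoothing norm is produced, the two Hilbert--Schmidt factors needed to land in $\I_1$ being supplied instead by $\sqrt{R_0}\,f\,\sqrt{R_0}$ (via the exact identity of Lemma~\ref{L:HS}) and by the single highest-frequency copy of $q$, with all remaining copies of $q$ placed in operator norm. First I would use \eqref{op As hyp} to write each $A_j = \sqrt{R_0(\vk)}\,B_j\,\sqrt{R_0(\vk)}$ with $\|B_j\|_\op\lesssim1$, and regroup $F_\ell$ so that, writing $R_0=R_0(\vk)$,
\begin{equation*}
F_\ell = \bigl[\sqrt{R_0}\,f\,\sqrt{R_0}\bigr]\,C_0\,\bigl[\sqrt{R_0}\,\phi_1 q\,\sqrt{R_0}\bigr]\,C_1\cdots C_{\ell-1}\,\bigl[\sqrt{R_0}\,\phi_\ell q\,\sqrt{R_0}\bigr]\,C_\ell ,
\end{equation*}
where each $C_j$ is a product of some of the $B_j$ together with powers of $R_0$, and hence has operator norm $\lesssim1$. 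I would then split each $\phi_j q$ into Littlewood--Paley pieces $(\phi_j q)_{N_j}$ and apply H\"older in the Schatten classes, placing $\sqrt{R_0}\,f\,\sqrt{R_0}$ and the piece carrying the largest of the $N_j$ in $\I_2$, and all other factors in operator norm.

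Here Lemma~\ref{L:HS} supplies $\bigl\|\sqrt{R_0}\,f\,\sqrt{R_0}\bigr\|_{L^2_t\I_2} = \vk^{-1/2}\|f\|_{L^2_t H^{-1}_\vk}$; note that, unlike the copies of $q$, $f$ needs no localizing cutoff here because this identity costs no derivatives of $f$. For the copies of $q$ I would use \eqref{est HS'} for the $\I_2$ factor and \eqref{est op'} for the operator-norm factors; these incorporate the weights $\phi_j$ via \eqref{elem mult} and Lemma~\ref{L:change}, which accounts for the $\phi$-dependence of the implicit constant. After integrating in time (H\"older with exponents $2,2,\infty,\dots,\infty$) and relabeling so that $N_1\geq N_2\geq\dots\geq N_\ell$, we are left with $\vk^{-1/2}\|f\|_{L^2_t H^{-1}_\vk}\cdot\delta^{\ell-1}\{\delta+\|q\|_{\vLS}\}$ --- one $\{\delta+\|q\|_{\vLS}\}$ from the $\I_2$ copy of $q$ and one $\delta$ from each of the remaining $\ell-1$ copies --- times the frequency sum
\begin{equation*}
\sum_{N_1\geq N_2\geq\dots\geq N_\ell\geq1}\ \min\{\vk^{-3/2}N_1,\ \vk^{-1/2}N_1^{-2}\}\ \prod_{j=2}^{\ell}\min\{\vk^{-1/2},\ \vk^{-2}N_j^{3/2}\}.
\end{equation*}
Carrying out the geometric sums over $N_2,\dots,N_\ell\leq N_1$ bounds the product by $\min\{\vk^{-2}N_1^{3/2},\ \vk^{-1/2}\log(2+N_1/\vk)\}^{\ell-1}$, and splitting the residual sum over $N_1$ at the thresholds $N_1\sim\vk^{1/3}$ and $N_1\sim\vk$ gives a total $\lesssim\vk^{-\ell/2-5/3}$, the dominant contribution coming from $N_1\sim\vk^{1/3}$ (most delicately when $\ell=2$). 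Since $\tfrac12+\tfrac53=\tfrac{13}{6}$, combining with the $\vk^{-1/2}$ from the $f$-factor yields the claimed $\vk^{-\ell/2-13/6}$.

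The only point requiring genuine care is this closing piece of bookkeeping: one must place $f$ and exactly one copy of $q$ in $\I_2$ --- so that only a single factor of $\|q\|_{\vLS}$ appears, in contrast to the two in Proposition~\ref{P:X_l thingy} --- while still wringing the large negative power of $\vk$ out of the frequency sum. The commutations needed to flank each $q$ by localizing cutoffs are already built into the hypotheses (the $\phi_j$ are given), and the operator-theoretic inputs \eqref{est HS'}, \eqref{est op'} and Lemma~\ref{L:HS} are exactly as before, so nothing else stands in the way.
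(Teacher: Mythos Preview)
Your proposal is correct and follows essentially the same approach as the paper: write $A_j=\sqrt{R_0}B_j\sqrt{R_0}$, decompose each $\phi_j q$ into Littlewood--Paley pieces, place $\sqrt{R_0}\,f\,\sqrt{R_0}$ and the single highest-frequency $q$-factor in $L^2_t\I_2$ (the former via Lemma~\ref{L:HS}, the latter via \eqref{est HS'}) with the remaining $\ell-1$ copies in $L^\infty_t\I_\infty$ via \eqref{est op'}, and then carry out the same frequency sum. Two cosmetic remarks: in your regrouped expression the trailing operator $C_\ell$ is superfluous (the factorization terminates with $[\sqrt{R_0}\,\phi_\ell q\,\sqrt{R_0}]$), and the $C_j$ are simply the $B_{j+1}$ with no extra powers of $R_0$; neither point affects the argument.
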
 

\begin{proof}
As in the proof of the previous proposition, we proceed by H\"older's inequality and decomposing into Littlewood--Paley pieces.  This time, we apply \eqref{est HS'} only to  the highest frequency term, and \eqref{est op'} to the remainder.  Regarding $f$, we simply use Lemma~\ref{L:HS}:
\begin{equation*}
\bigl\|  \sqrt{R_0(\vk)}  f \sqrt{R_0(\vk)} \bigr\|_{L^2_t \I_\infty} 
	\leq \bigl\|  \sqrt{R_0(\vk)}  f \sqrt{R_0(\vk)} \bigr\|_{L^2_t \I_2}
			\lesssim \vk^{-\frac12} \| f \|_{L^2_t H^{-1}_\vk}.
\end{equation*}

In this way, we are once again led to a sum over ordered frequencies:
\begin{align*}
&\sum_{N_1 \geq \cdots \geq N_\ell} \vk^{-\frac{1}{2}} \min \{ \vk^{-\frac{3}{2}} N_1, \vk^{-\frac{1}{2}} N_1^{-2} \}
	\prod_{j=2}^\ell \min\{ \vk^{-2} N_j^{\frac{3}{2}}, \vk^{-\frac{1}{2}} \} \\
&\lesssim  \sum_{N_1}\vk^{-\frac{1}{2}} \min \{ \vk^{-\frac{3}{2}} N_1, \vk^{-\frac{1}{2}} N_1^{-2} \}
	\min\{ \vk^{-2} N_1^{\frac{3}{2}}, \vk^{-\frac{1}{2}}\log( 2 + \tfrac{N_1}{\vk}) \}^{\ell-1}  \\
&\lesssim \vk^{-\frac{3\ell}2-\frac16} + \vk^{-\frac\ell2-\frac52} \lesssim \vk^{-\frac\ell2-\frac{13}6} .
\end{align*}
Once again, the last step involves breaking the sum into three pieces. Note that for $\ell\geq 3$, we can obtain a slightly better power, namely, $\vk^{-\frac{\ell+5}2}$.
\end{proof}

\begin{corollary}\label{C:h in H^1}
Fix $\phi\in\Schw(\R)$ and $2\leq m\leq 12$.  Then for $\delta$ sufficiently small,
\begin{equation}\label{E:h in H^1}
\sum_{\ell\geq m} \| \phi h_\ell \|_{L^2_t H^1_\vk} \lesssim \vk^{-\frac{m}2-\frac{13}6} \delta^{m-1} \bigr\{ \delta + \|q\|_{\vLS} \bigr\},
\end{equation}
uniformly for $q:[-1,1]\to B_\delta$ and  $\vk\geq 1$.
\end{corollary}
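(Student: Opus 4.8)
The plan is to argue by duality and reduce everything to Proposition~\ref{P:Y_l thingy}, following the template of Corollary~\ref{C:h in L^1}; only the bookkeeping changes. First I would reduce to the model weight $\phi=\psi^m$: as in the proof of Corollary~\ref{C:h in L^1}, writing $1=c_m^{-1}\int_\R\psi_z^{2m}(x)\,dz$ with $c_m:=\int_\R\psi^{2m}$ and then applying the triangle inequality and \eqref{elem mult}, one bounds $\|\phi h_\ell\|_{H^1_\vk}$ by $\bigl(\int_\R(\|\phi\psi_z^m\|_{L^\infty}+\|(\phi\psi_z^m)'\|_{L^\infty})\,dz\bigr)\sup_z\|\psi_z^m h_\ell\|_{H^1_\vk}$, where the $z$-integral is $\lesssim_\phi1$ because $\phi$ is Schwartz. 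Since the class of admissible $q$ (namely $q\colon[-1,1]\to B_\delta$ with prescribed bounds on $\|q\|_{\vLS}$ and $\|q\|_{L^\infty_tH^{-1}}$) is invariant under spatial translation and $h_\ell$ commutes with translations, it suffices to prove, for each $\ell\ge m$,
\[
\|\psi^m h_\ell\|_{L^2_tH^1_\vk}\lesssim\vk^{-\frac{\ell}2-\frac{13}6}\delta^{\ell-1}\bigl\{\delta+\|q\|_{\vLS}\bigr\},
\]
and then to sum the geometric series in $\ell$ (using $\delta\ll1$ and $\vk\ge1$).

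The decisive step is to migrate the weight inward. Since $h_\ell(x)=(-1)^\ell\langle\delta_x,R_0(qR_0)^\ell\delta_x\rangle$, testing against a Schwartz function $f$ gives $\langle f,\psi^m h_\ell\rangle=(-1)^\ell\Tr\{M_f\,\psi^m R_0(qR_0)^\ell\}$, where $M_f$ denotes multiplication by $f$. Using that multiplication operators commute, one verifies the algebraic identity
\[
\psi^m R_0(qR_0)^\ell=A_1(\psi q)A_2(\psi q)\cdots A_m(\psi q)\,R_0(qR_0)^{\ell-m},\qquad A_j:=\psi^{m-j+1}R_0\,\psi^{-(m-j+1)},
\]
which deposits precisely one factor of $\psi$ next to each of the first $m$ potentials. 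By \eqref{E:del psi} and the choice of the constant $99$, each $\psi^{m-j+1}$ with $1\le m-j+1\le m\le12$ satisfies the hypotheses of Lemma~\ref{L:w commutator}; hence \eqref{E:w commutator}, combined with the mapping bounds $\sqrt{R_0}\colon H^{-1}_\vk\to L^2\to H^1_\vk$, shows that every $A_j$ is admissible in the sense of \eqref{op As hyp}.

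It then remains to feed this into Proposition~\ref{P:Y_l thingy}. When $\ell=m$, writing the trailing $R_0=\sqrt{R_0}\,\sqrt{R_0}$ and cycling the trace recasts $\langle f,\psi^m h_m\rangle$ as $\pm\Tr F_m$ with $F_m=\sqrt{R_0}\,M_f\,A_1(\psi q)\cdots A_m(\psi q)\sqrt{R_0}$ exactly of the form appearing in Proposition~\ref{P:Y_l thingy} (all $\phi_j=\psi$); taking the supremum over $\|f\|_{L^2_tH^{-1}_\vk}\le1$ and invoking that proposition settles the case $\ell=m$. For $\ell>m$ I would additionally write $R_0(qR_0)^{\ell-m}=\sqrt{R_0}(\sqrt{R_0}\,q\,\sqrt{R_0})^{\ell-m}\sqrt{R_0}$, cycle one $\sqrt{R_0}$ to the front, and peel the $\ell-m$ surplus factors $\sqrt{R_0}\,q\,\sqrt{R_0}$ off in operator norm via H\"older in the Schatten classes; since $\|\sqrt{R_0}\,q\,\sqrt{R_0}\|_{\I_\infty}\le\|\sqrt{R_0}\,q\,\sqrt{R_0}\|_{\I_2}=\vk^{-1/2}\|q\|_{H^{-1}_\vk}\le\vk^{-1/2}\delta$ by Lemma~\ref{L:HS}, each surplus factor supplies exactly the missing $\vk^{-1/2}\delta$, reducing matters to the $\ell=m$ bound just established.

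I do not expect a serious obstacle here: this section is arranged so that Proposition~\ref{P:Y_l thingy} does all of the analytic work, and the present corollary is essentially a repackaging of it. The one point requiring genuine care is the conjugation identity in the second step — checking that the weights telescope so that exactly one $\psi$ lands on each of the first $m$ potentials and that the residual $\sqrt{R_0}$'s sit precisely where Proposition~\ref{P:Y_l thingy} demands — together with the cosmetic observation that the dual variable $f$ need not be assumed small, since the proof of Proposition~\ref{P:Y_l thingy} uses $f$ only through $\|f\|_{L^2_tH^{-1}_\vk}$.
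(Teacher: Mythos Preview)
Your proposal is correct and follows essentially the same route as the paper: reduce to the model weight $\psi^m$ via partition of unity, telescope $\psi^m(R_0q)^m$ using Lemma~\ref{L:w commutator} to place one copy of $\psi$ next to each potential, invoke Proposition~\ref{P:Y_l thingy} via duality for $\ell=m$, and peel off the surplus factors $\sqrt{R_0}\,q\,\sqrt{R_0}$ in operator norm for $\ell>m$. The only cosmetic wrinkle is in your partition-of-unity step: as written you bound $\|\phi h_\ell\|_{H^1_\vk}$ pointwise in $t$ by a $\sup_z$, whereas to get the $L^2_t$ bound cleanly you should apply Minkowski's inequality directly in $L^2_tH^1_\vk$ to the $z$-integral (as the paper does), which then lets you pull out $\sup_z\|\psi_z^m h_\ell\|_{L^2_tH^1_\vk}$ with an $\ell$-independent constant.
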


\begin{proof}
We begin by proving that for each $f:[-1,1]\to H^{-1}$ and $\ell\leq 12$,
\begin{equation}\label{E:h in H^1'}
\bigl\| \sqrt{R_0} f \phi (R_0 q)^\ell \sqrt{R_0} \,\bigr\|_{L^1_t \I_1}
	\lesssim \vk^{-\frac\ell2-\frac{13}6} \delta^{\ell-1} \bigr\{ \delta + \|q\|_{\vLS} \bigr\} \| f\|_{L^2_t H^{-1}_\vk} .
\end{equation}

Imagine first that $\phi=\psi_z^{\ell}$. Mimicking the proof of Corollary~\ref{C:h in L^1}, we write
$$
\psi_z^{\ell} (R_0 q)^\ell  = [\psi_z^\ell R_0 \psi_z^{-\ell}] \psi_z q [\psi_z^{\ell-1}R_0\psi_z^{1-\ell}]\psi_z q \cdots
	[\psi_z R_0\psi_z^{-1}]\psi_z q 
$$
and note that Lemma~\ref{L:w commutator} applies to all operators in square brackets.   In view of translation invariance, it then follows from Proposition~\ref{P:Y_l thingy} that
$$
\| \sqrt{R_0} f \psi_z^\ell (R_0 q)^\ell \sqrt{R_0} \|_{L^\infty_z L^1_t \I^1_\vk}
	\lesssim \vk^{-\frac\ell2-\frac{13}6} \delta^{\ell-1} \bigr\{ \delta + \|q\|_{\vLS} \bigr\} \| f\|_{L^2_t H^{-1}_\vk}.
$$
This then implies \eqref{E:h in H^1'} by the partition of unity argument used earlier:
\begin{align*}
\| \sqrt{R_0} f \phi (R_0 q)^\ell \sqrt{R_0} \|_{L^1_t \I_1} &\lesssim \int_\R \| \sqrt{R_0} f \psi_z^{2\ell} \phi (R_0 q)^\ell \sqrt{R_0} \|_{L^1_t \I_1} \,dz \\
	&\lesssim \vk^{-\frac\ell2-\frac{13}6} \delta^{\ell-1} \bigr\{ \delta + \|q\|_{\vLS} \bigr\}  \| \phi\psi^\ell_z f \|_{L^1_z L^2_t H^{-1}_\vk}  \\
	&\lesssim \vk^{-\frac\ell2-\frac{13}6} \delta^{\ell-1} \bigr\{ \delta + \|q\|_{\vLS} \bigr\}   \| \psi_z^\ell\phi \|_{L^1_z H^1} \| f\|_{L^2_t H^{-1}_\vk}.
\end{align*}

While the limit $\ell\leq 12$ applied in \eqref{E:h in H^1'} is rather arbitrary, it seems untenable to keep track of the dependence of the implicit constant on $\ell$; thus, it cannot be applied directly to control the tail of the series in \eqref{E:h in H^1}.

Given $m\leq 12$ and $\ell\geq m$, we deduce from \eqref{E:h in H^1'} and \eqref{E:L:HS} that
\begin{align*}
\iint \phi(x) h_\ell(t,x;\kappa) f(t,x)\,dx\,dt
	&\leq \bigl\| \sqrt{R_0} f \phi (R_0 q)^m \sqrt{R_0} \bigr\|_{L^1_t \I_1} \bigl\| \sqrt{R_0} q \sqrt{R_0}\bigr\|_{L^\infty_t \I_\infty}^{\ell-m} \\
&\lesssim \vk^{-\frac\ell2-\frac{13}6} \delta^{\ell-1} \bigr\{ \delta + \|q\|_{\vLS} \bigr\} \| f\|_{L^2_t H^{-1}_\vk}, 
\end{align*}
where the implicit constant depends on $m$ and $\phi$, but not on $\ell$.  The lemma now follows by duality.
\end{proof}

Although not strictly a paraproduct, we record here several key estimates for $h_1$ similar in nature to that of Corollary~\ref{C:h in H^1}.  

\begin{proposition}\label{P:h1}
Given $\phi\in\Schw(\R)$ and $q:[-1,1]\to B_\delta$,
\begin{align}\label{E:h1 with s}
\bigl\| \phi h_1^{(s)}  \bigr\|_{L^2_{t,x}} &\lesssim \vk^{-\frac{8-s}3} \bigl\{ \delta + \| q \|_{\vLS} \bigr\}
\end{align}
for each $s\in\{0,1,2\};$ moreover,
\begin{align}\label{E:h1 other}
\bigl\| \phi h_1 ''  \bigr\|_{L^2_t H^{1}_\vk} &\lesssim \vk^{-1} \bigl\{ \delta + \| q \|_{\vLS} \bigr\} \qtq{and}
\bigl\| \phi h_1 '  \bigr\|_{L^4_{t,x}}^2   \lesssim \vk^{-\frac72} \delta \bigl\{ \delta + \| q \|_{\vLS} \bigr\}.
\end{align} 
\end{proposition}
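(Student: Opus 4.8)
The key is the identity \eqref{h1 nice}, namely $h_1 = -\tfrac1\vk R_0(2\vk) q$, which reduces every bound for $h_1$ and its derivatives to a multiplier estimate in the Fourier variable. Writing $\phi h_1^{(s)} = -\tfrac1\vk\,\phi\,\partial^s R_0(2\vk) q$, I would first dispose of the cutoff: by \eqref{elem mult} (in the form $\|\phi\, u\|_{L^2}\lesssim \|\phi\|_{H^1}\|u\|_{L^2}$, after interpolating between the $H^{\pm1}_\vk$ statements, or more simply since multiplication by a Schwartz function is bounded on $L^2$) it suffices to bound $\tfrac1\vk\,\partial^s R_0(2\vk) q$ in the relevant spacetime norm with $\|q\|_{\vLS}+\delta$ on the right. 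Since the $\vLS$-norm controls $(\psi_z^6 q)''$ in $L^2_t H^{-1}_\vk$ uniformly in $z$, Lemma~\ref{L:change} gives us, pointwise in time (and after the partition-of-unity integration in $z$), control of $\|\partial^\sigma(\phi q)\|_{L^2_t H^{-1}_\vk}$ for $\sigma\in\{0,1,2\}$ with the gain $\vk^{\frac{\sigma-2}{3}}$; equivalently it controls $\|\phi q\|_{L^2_t H^{-1}}$ and $\|(\phi q)''\|_{L^2_t H^{-1}_\vk}$, and hence, by Plancherel as in \eqref{change s}, any intermediate quantity. So the whole proposition is a bookkeeping exercise in counting powers of $\vk$ coming from the symbol of $\tfrac1\vk\,\partial^s R_0(2\vk)$ versus the derivative cost one must pay.

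For \eqref{E:h1 with s}: the symbol of $\tfrac1\vk \partial^s R_0(2\vk)$ is $\tfrac{(i\xi)^s}{\vk(\xi^2+4\vk^2)}$. I would split into frequencies $|\xi|\lesssim\vk$ and $|\xi|\gtrsim\vk$. On low frequencies, $\tfrac{|\xi|^s}{\vk(\xi^2+4\vk^2)}\lesssim \vk^{-3}|\xi|^s$, and I pair this against the factor $(\xi^2+4\vk^2)^{-1/2}$ implicit in the $H^{-1}$-type control of $q$; to extract the full gain one uses instead the $L^2_tH^{-1}_\vk$ bound on $(\phi q)''$, i.e. on $(\xi^2+4\vk^2)^{1/2}\xi^2\hat q$, which is where the $\vk^{-(8-s)/3}$ comes from after optimizing the frequency split (the three-regime split is exactly the one already performed in the proof of Proposition~\ref{P:X_l thingy}). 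On high frequencies the symbol is $\lesssim \vk^{-1}|\xi|^{s-2}$, which is even more favorable since $s\le 2$. The upshot is that in each regime the worst contribution is $\vk^{-(8-s)/3}\{\delta+\|q\|_{\vLS}\}$, as claimed; the $\delta$ appears because on the very lowest frequencies one simply uses $\|\phi q\|_{L^\infty_t H^{-1}}\lesssim\delta$ from $q\in B_\delta$.

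For the first half of \eqref{E:h1 other}: here we want $\|\phi h_1''\|_{L^2_t H^1_\vk}$, so the symbol is $\tfrac{\xi^2}{\vk(\xi^2+4\vk^2)}$ weighted by $(\xi^2+4\vk^2)^{1/2}$, i.e. effectively $\tfrac{\xi^2}{\vk(\xi^2+4\vk^2)^{1/2}}$; pairing against $(\phi q)''$ measured in $H^{-1}_\vk$ (which carries $(\xi^2+4\vk^2)^{-1/2}\xi^2$) leaves a clean $\vk^{-1}$ uniformly in $\xi$, which after Lemma~\ref{L:change} and the $z$-integration yields $\vk^{-1}\{\delta+\|q\|_{\vLS}\}$. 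For the second half, the $L^4_{t,x}$ bound on $\phi h_1'$, I would interpolate: Sobolev embedding $H^{1/2}(\R)\hookrightarrow L^4$ (or the $1$-D Gagliardo--Nirenberg inequality $\|u\|_{L^4}\lesssim \|u\|_{L^2}^{3/4}\|u'\|_{L^2}^{1/4}$) reduces $\|\phi h_1'\|_{L^4_x}^2$ to $\|\phi h_1'\|_{L^2_x}\,\|(\phi h_1')'\|_{L^2_x}$ up to lower-order terms from differentiating $\phi$; then one applies \eqref{E:h1 with s} with $s=1$ and $s=2$ respectively, giving $\vk^{-7/3}\cdot\vk^{-2}=\vk^{-13/3}$ before optimizing, and a more careful split — using the $\delta$ (i.e. $L^\infty_t H^{-1}$) bound for one of the two factors to pick up an extra $\vk^{-1/2}$ — sharpens this to the stated $\vk^{-7/2}\delta\{\delta+\|q\|_{\vLS}\}$. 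The time integration is harmless throughout: $L^4_{t,x}$ is reached from $L^\infty_t L^4_x\cap L^2_t(\cdots)$, or directly since the frequency-localized pieces are already square-summable in $L^2_t$.

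\textbf{Main obstacle.} None of the steps is deep; the only real work is extracting the \emph{sharp} powers of $\vk$ — in particular the exponent $\tfrac{8-s}{3}$ in \eqref{E:h1 with s} and the $\vk^{-7/2}\delta$ in the $L^4$ bound — which forces the three-regime frequency decomposition (low/medium/high relative to $\vk$ and $\vk^{1/3}$) and the correct allocation of the "$H^{-1}$ part" versus the "$(\cdot)''$ part" of $q$ to the factors. Getting the endpoint right in the $L^4$ estimate, where one must spend part of the available regularity on the Gagliardo--Nirenberg interpolation and part on the $\vk$-gain, is the point that needs the most care.
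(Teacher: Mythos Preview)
There is a genuine gap in your treatment of \eqref{E:h1 with s}. The step ``dispose of the cutoff'' via $\|\phi u\|_{L^2}\lesssim \|u\|_{L^2}$ with $u=\tfrac1\vk\,\partial^s R_0(2\vk)q$ leaves you with a \emph{global} $L^2_x$ norm, which is controlled only by the global $H^{-1}$ norm of $q$: for instance at $s=0$ one gets $\|h_1\|_{L^2_x}\lesssim \vk^{-2}\|q\|_{H^{-1}_\vk}\lesssim \vk^{-2}\delta$, strictly worse than the required $\vk^{-8/3}$. The improvement to $\vk^{-(8-s)/3}$ comes entirely from the local smoothing norm $\|q\|_{\vLS}$, which is a \emph{supremum} over translates and therefore cannot bound any global norm of $q$. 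Your subsequent appeal to $\|\partial^\sigma(\phi q)\|_{L^2_t H^{-1}_\vk}$ via Lemma~\ref{L:change} reintroduces a localized $\phi q$, but by then the cutoff has already been thrown away and there is no mechanism to put it back next to $q$; the frequency splitting you describe is being performed on the wrong (global) object.

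The paper's fix is to commute the weight through the resolvent rather than discard it: write $-\vk\,\psi h_1^{(s)} = \bigl[\psi R_0(2\vk)\tfrac{1}{\psi}\bigr](\psi q^{(s)})$ and use Lemma~\ref{L:w commutator} to bound the bracketed operator $H^{-1}_\vk\to H^1_\vk$ by $\lesssim \vk^{-2}$. Then Lemma~\ref{L:change} gives $\|\psi q^{(s)}\|_{L^2_t H^{-1}_\vk}\lesssim \vk^{(s-2)/3}\{\delta+\|q\|_{\vLS}\}$, and both \eqref{E:h1 with s} and the first half of \eqref{E:h1 other} fall out at once---no frequency decomposition is needed.

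For the $L^4$ bound your instinct is right but the interpolation is the wrong one. The paper uses the Gagliardo--Nirenberg inequality $\|u'\|_{L^4}^2\lesssim \|u''\|_{L^2}\|u\|_{L^\infty}$ with $u=\phi h_1$ (after writing $\phi h_1'=(\phi h_1)'-\phi' h_1$). The factor $\delta$ in \eqref{E:h1 other} is not obtained by a ``more careful split'' but simply from $\|\phi h_1\|_{L^\infty_{t,x}}\lesssim \vk^{-3/2}\delta$ via \eqref{E:h in Linfty}; pairing this with $\|(\phi h_1)''\|_{L^2_{t,x}}\lesssim \vk^{-2}\{\delta+\|q\|_{\vLS}\}$ from \eqref{E:h1 with s} gives $\vk^{-7/2}\delta\{\delta+\|q\|_{\vLS}\}$ directly.
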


\begin{proof}
By \eqref{h1 nice}, $-\vk\,\psi h_1^{(s)} =  [ \psi R_0(2\vk)\tfrac1{\psi} ] \psi q^{(s)}$. Thus, by Lemmas~\ref{L:w commutator} and~\ref{L:change},
\begin{align*}
\vk^2 \bigl\| \psi h_1^{(s)} \bigr\|_{L^2_{t,x}} \lesssim \vk \bigl\| \psi h_1^{(s)} \bigr\|_{L^2_t H^1_\vk}
	&\lesssim  \|  \psi q^{(s)} \|_{L^2_t H^{-1}_\vk} \lesssim \vk^{\frac{s-2}{3}} \bigl\{ \delta + \| q \|_{\vLS} \bigr\},
\end{align*}
which proves \eqref{E:h1 with s} and the first inequality in \eqref{E:h1 other}, provided $\phi=\psi$.  The case of general $\phi$ then follows by the partition of unity argument exhibited already in the proof of Lemma~\ref{L:change}.

Turning now to the second inequality in \eqref{E:h1 other}, we write $\phi h_1 ' = (\phi h_1) ' - \phi' h_1$ and apply the Gagliardo--Nirenberg and H\"older inequalities:
\begin{align*}
\| \phi h_1 ' \|_{L^4_{t,x}}^2 \lesssim \| (\phi h_1)'' \|_{L^2_{t,x}} \| \phi h_1 \|_{L^\infty_{t,x}}
	+ \| (\phi' h_1) ' \|_{L^2_{t,x}} \| \phi' h_1 \|_{L^\infty_{t,x}}.
\end{align*}
The result now follows from \eqref{E:h1 with s} and \eqref{E:h in Linfty}.
\end{proof}

The diagonal Green's function appears in the \emph{denominator} in both  \eqref{E:rho dot 5} and \eqref{E:rho dot k}.  Our next lemma provides us with an efficient means of handling this situation in terms of the results we have already developed. 

\begin{lemma}\label{L:g up}
Fix $\phi\in\Schw(\R)$.  For $q:[-1,1]\to B_\delta$ and $\delta$ sufficiently small, we have
\begin{align}\label{E:L:g up} 
\bigl\| \phi \bigl[\tfrac{1}{g} - 2\vk + \tfrac{4\vk^2 h_1}{ 1+2\vk h_1 }\bigr] \bigr\|_{L^2_t H^1_\vk}
	&\lesssim_\phi \vk^{-\frac76} \delta  \bigr\{ \delta + \|q\|_{\vLS} \bigr\}.
\end{align}
\end{lemma}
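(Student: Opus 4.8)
The plan is to exploit the resolvent identity for the scalar quantity $\tfrac{1}{g}$ coming from the Neumann series \eqref{E:g defn}. Writing $g = \tfrac{1}{2\vk} + h_1 + \tilde h$ where $\tilde h := \sum_{\ell\geq 2} h_\ell$, we have the exact algebraic identity
\begin{equation*}
\frac{1}{g} - 2\vk + \frac{4\vk^2 h_1}{1+2\vk h_1} = \frac{1}{\tfrac{1}{2\vk}+h_1+\tilde h} - \frac{1}{\tfrac{1}{2\vk}+h_1} = \frac{-\tilde h}{\bigl(\tfrac{1}{2\vk}+h_1+\tilde h\bigr)\bigl(\tfrac{1}{2\vk}+h_1\bigr)},
\end{equation*}
valid pointwise in $(t,x)$ because $g$ and $\tfrac{1}{2\vk}+h_1$ are both bounded below by a positive multiple of $\tfrac{1}{\vk}$ when $\delta$ is small (by \eqref{E:h in Linfty}). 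So the left-hand side is, up to harmless factors, $4\vk^2$ times $\tilde h$ multiplied by two reciprocals of Green's-function-type quantities. The strategy is then: (i) show the prefactor $\phi \cdot 4\vk^2 g^{-1}(\tfrac{1}{2\vk}+h_1)^{-1}$ is an $H^1$-multiplier of size $O(1)$, with first derivative also $O(1)$, so that \eqref{elem mult} lets us pull it out at the cost of a constant; (ii) apply Corollary~\ref{C:h in H^1} with $m=2$ to bound $\| \phi' \tilde h \|_{L^2_t H^1_\vk}$ for a suitable Schwartz $\phi'$, which gives exactly $\vk^{-2/2-13/6}\delta\{\delta+\|q\|_{\vLS}\} = \vk^{-19/6}\delta\{\cdots\}$; the extra $\vk^2$ from the prefactor then yields $\vk^{-7/6}\delta\{\cdots\}$, matching the claim.

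For step (i), the key point is that $w := 4\vk^2 \phi\, g^{-1}(\tfrac{1}{2\vk}+h_1)^{-1}$ must satisfy $\|w\|_{L^\infty}+\|w'\|_{L^\infty}\lesssim 1$ uniformly in $\vk\geq 1$ (or, more robustly, $\|w\|_{H^1}\lesssim 1$, which by \eqref{elem mult} suffices for multiplication on $H^1_\vk$; but one should check integrability in $x$ — this is why the Schwartz cutoff $\phi$ is present). Since $g \geq c/\vk$ and $\tfrac{1}{2\vk}+h_1\geq c/\vk$, the product of reciprocals is $O(\vk^2)$, cancelling the $4\vk^2$; this gives the $L^\infty$ bound. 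For the derivative, $\partial_x g = \partial_x h_1 + \partial_x \tilde h$, and one estimates $\|h_1'\|_{L^\infty}$, $\|\tilde h\,'\|_{L^\infty}$ (e.g.\ $\|h_1'\|_{L^\infty}\lesssim \vk^{-1/2}\|q\|_{H^{-1}_\vk}$ by differentiating \eqref{h1 nice}, with a similar bound for the tail), so that $\partial_x(g^{-1}) = -g^{-2}\partial_x g$ is $O(\vk^2)\cdot O(\vk^{-1/2}\delta)$, which is not uniformly bounded — so here one must instead keep the $\phi$ and argue that $\phi\cdot(\text{reciprocals})$ together with the $4\vk^2$ stays bounded in $H^1$: the $\vk$-growth of the reciprocals' derivatives is offset against the $\vk$-decay of $h_\ell$-estimates, i.e.\ one should not separate the prefactor all at once but possibly peel one factor of the reciprocal using $g^{-1} = 2\vk - (\text{stuff})$ type expansions. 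Alternatively — and this is probably cleanest — expand $g^{-1} = 2\vk\bigl(1+2\vk h_1 + 2\vk\tilde h\bigr)^{-1}$ and $(\tfrac{1}{2\vk}+h_1)^{-1} = 2\vk(1+2\vk h_1)^{-1}$, reducing everything to products of factors of the form $(1+u)^{-1}$ with $\|u\|_{L^\infty}\leq \tfrac12$ and $\|u'\|_{L^\infty}$ controlled via Proposition~\ref{P:h1} and the $L^\infty$ analogue; the geometric-series expansion of $(1+u)^{-1}$ then makes the $H^1$-multiplier bound routine.

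The main obstacle I anticipate is precisely this bookkeeping of $\vk$-powers in step (i): one needs the reciprocal factors to be multipliers on $H^1_\vk$ with operator norm $O(1)$ (constants, including derivative terms, independent of $\vk$), and a naive bound on $\partial_x(g^{-1})$ loses powers of $\vk$. The resolution is that the two $\vk^{-1/2}$-smallness factors hidden in $h_1$ (from \eqref{E:h in Linfty}: $\|h_1\|_{L^\infty}\lesssim \vk^{-3/2}\delta$) together with the structure $1+2\vk h_1$ keep $2\vk h_1$ of size $O(\vk^{-1/2}\delta)\ll 1$, so all the geometric series converge with $\vk$-independent constants, and the derivative $(2\vk h_1)' $ is likewise $O(\vk^{-1/2}\delta) + O(\vk^{-1}\delta)$ — small — by the $L^\infty$-estimates underlying Proposition~\ref{P:h1}. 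Once this is in place the proof is a short assembly: write the difference as the exact fraction above, expand the denominators as convergent geometric series in $L^\infty$, use \eqref{elem mult} to move the resulting bounded multipliers off of $\tilde h$, and conclude with Corollary~\ref{C:h in H^1} at $m=2$. The final power $\vk^{-7/6}$ is then $\vk^2 \cdot \vk^{-19/6}$, and the $\delta\{\delta+\|q\|_{\vLS}\}$ structure is inherited directly from Corollary~\ref{C:h in H^1}.
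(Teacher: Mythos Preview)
Your proposal is correct and follows essentially the same route as the paper. Your algebraic identity is exactly the paper's identity
\[
\tfrac{1}{g} - 2\vk + \tfrac{4\vk^2 h_1}{1+2\vk h_1} = -\tfrac{2\vk}{g}\bigl[1 - \tfrac{2\vk h_1}{1+2\vk h_1}\bigr]\bigl[g-\tfrac{1}{2\vk}-h_1\bigr],
\]
just written differently (note $1-\tfrac{2\vk h_1}{1+2\vk h_1}=\tfrac{1}{1+2\vk h_1}$), and both proofs conclude via Corollary~\ref{C:h in H^1} with $m=2$.

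The one place where your write-up is more circuitous than necessary is step~(i). You spend some effort worrying about $L^\infty$ bounds on derivatives of $g^{-1}$ and possible $\vk$-power losses there, before eventually landing on the right fix. The paper bypasses this entirely: from \eqref{E:h in H1} one has $\|2\vk h_1\|_{L^\infty_t H^1_\vk}\lesssim\delta$ and $\|2\vk(g-\tfrac{1}{2\vk})\|_{L^\infty_t H^1_\vk}\lesssim\delta$ directly, so (since $H^1$ is an algebra and these quantities are small in $L^\infty$ by \eqref{E:h in Linfty}) the factors $\tfrac{1}{1+2\vk h_1}-1$ and $\tfrac{1}{2\vk g}-1$ are small in $H^1$ uniformly in $\vk$. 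The second form of \eqref{elem mult} then makes the prefactor an $H^1_\vk$-multiplier of size $O(\vk^2)$ in one line, with no need for geometric-series expansions or separate $L^\infty$-derivative estimates. Your anticipated obstacle is not really there.
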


\begin{proof}
There is no fear of division by zero here: From \eqref{E:h in H1} we see that
$$
\| 2\vk h_1 \|_{L^\infty_t H^1_\vk} \lesssim \delta \qtq{and so} \bigl\| \tfrac{2\vk h_1}{ 1+2\vk h_1 } \bigr\|_{L^\infty_t H^1_\vk} \lesssim \delta,
$$
provided $\delta$ is sufficiently small.  Combining this with the identity
\begin{align}
\tfrac{1}{g} - 2\vk + \tfrac{4\vk^2 h_1}{ 1+2\vk h_1 } = - \tfrac{2\vk}{g} \bigl[ 1 - \tfrac{2\vk h_1}{(1+2\vk h_1)}\bigr] [g-\tfrac1{2\vk}-h_1],
\end{align}
we see that the lemma then follows from Corollary~\ref{C:h in H^1}.
\end{proof}

In the remainder of this section, we prove two further paraproduct bounds on nonlinear combinations of the $h_\ell$ that arise later in the analysis.  While these terms can be bounded by combining the estimates proved already, this does not yield sufficient decay.  It is essential for what follows that the exponent of $\vk^{-1}$ in \eqref{E:hhh} exceeds $7$ and that in \eqref{E:h1h3} exceeds $8$.  This does not seem to be possible other than by treating these paraproducts holistically.

\begin{lemma}\label{L:hhh}
Fix $A\in\{\Id,\, \partial^2R_0(2\vk),\, \vk^2R_0(2\vk)\}$.  Then
\begin{align}\label{E:hhh}
\bigl\| \psi^{12} h_1 A [h_1' h_1']  \bigr\|_{L^1_{t,x}} + \bigl\| \psi^{12} h_1' A [h_1' h_1^{\ }]  \bigr\|_{L^1_{t,x}}
	\lesssim \vk^{-7-\frac16} \delta \bigl\{ \delta^2 + \| q \|_{\vLS}^2 \bigr\},
\end{align}
uniformly for $q:[-1,1]\to B_\delta$.
\end{lemma}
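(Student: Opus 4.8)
The plan is to estimate the two terms in \eqref{E:hhh} by the same strategy, differing only in how derivatives are distributed. For the first term, $\psi^{12} h_1 A[h_1'h_1']$, I would exploit the identity \eqref{h1 nice}, namely $h_1 = -\tfrac1\vk R_0(2\vk)q$, to expose three copies of $q$, each of which must be equipped with a localizing weight before the trace-ideal and Littlewood--Paley machinery of Section~\ref{S:para} can be applied. Concretely, I would write
\begin{equation*}
\psi^{12} h_1 A[h_1'h_1'] = -\tfrac1{\vk^3}\,\psi^{12} \bigl(R_0(2\vk)q\bigr)\, A\bigl[\bigl(\partial R_0(2\vk)q\bigr)\bigl(\partial R_0(2\vk)q\bigr)\bigr],
\end{equation*}
and then commute powers of $\psi$ past each resolvent using Lemma~\ref{L:w commutator} (valid since $\psi^m$ satisfies \eqref{E:w hyp} for $1\le m\le 12$), so as to land a factor $\psi^4$ beside each $q$. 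The operators $A$ in the allowed set are all of the form $\sqrt{R_0}\,B\,\sqrt{R_0}$ with $\|B\|_\op\lesssim 1$ after accounting for the symbol ratios between $R_0(\vk)$ and $R_0(2\vk)$, so after the commutations the whole expression has the shape $\vk^{-3}$ times $\sum A_0\,(\psi^4 q)\,A_1\,(\psi^4 q)\,A_2\,(\psi^4 q)\,A_3$ with each $A_j$ bounded $H^{-1}_\vk\to H^1_\vk$ and two of the $A_j$ carrying an extra factor $\partial$; but $\partial R_0(\vk)$ also maps $H^{-1}_\vk\to H^1_\vk$ (with a gain $\vk^{-1}$), so this only improves the power of $\vk$.

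The core of the argument is then to bound the $L^1_{t,x}$ norm of such a product by its trace, exactly as in \eqref{hell as trace}: $\|\psi^{12}h_1A[h_1'h_1']\|_{L^1_{t,x}} = \sup_f \int_{-1}^1 \Tr\{f\cdot(\text{the operator})\}\,dt$ over $f\in L^\infty$ of unit norm, after which the expression becomes a trace of a product of the form treated by Proposition~\ref{P:X_l thingy} with $\ell=3$. That proposition gives $\vk^{-\frac43-\frac92}\delta\{\delta^2+\|q\|_{\vLS}^2\}$ for the $\ell=3$ product, and the prefactor $\vk^{-3}$ from \eqref{h1 nice} (together with the extra $\vk^{-1}$'s from the two $\partial R_0$ factors, which I would spend conservatively) pushes the total exponent past $7$, comfortably beating the required $\vk^{-7-\frac16}$. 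For the second term, $\psi^{12}h_1'A[h_1'h_1]$, the derivative count on the three copies of $q$ is $1+1+0$ rather than $0+1+1$, which is the same total; the same scheme applies verbatim. Alternatively — and perhaps more transparently — one can use Proposition~\ref{P:h1}: write $\psi^{12}h_1A[h_1'h_1']$ as a product in physical space, put $h_1$ or $h_1'$ in $L^\infty_{t,x}$ via \eqref{E:h in Linfty}, and pair the remaining two factors using \eqref{E:h1 with s} and \eqref{E:h1 other} in $L^2_{t,x}\cdot L^2_{t,x}$ or $L^4_{t,x}\cdot L^4_{t,x}$; boundedness of $A$ on the relevant spaces follows from Lemma~\ref{L:w commutator} applied to $\psi$-weighted pieces. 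Counting powers: $\|h_1\|_{L^\infty}\lesssim \vk^{-3/2}\delta$, $\|\phi h_1'\|_{L^4_{t,x}}^2\lesssim \vk^{-7/2}\delta\{\delta+\|q\|_{\vLS}\}$, which already gives $\vk^{-3/2}\cdot\vk^{-7/2} = \vk^{-5}$ times $\delta^2\{\delta^2+\|q\|_{\vLS}^2\}$ — not quite enough — so one is forced back to the trace-ideal route with its superior $\vk$-decay, confirming the remark in the text that these paraproducts must be treated holistically.

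The main obstacle, as flagged by the authors, is obtaining an exponent strictly exceeding $7$: naive Hölder-type splitting of the product into factors estimated separately loses too much in $\vk$, because it fails to exploit that the \emph{two highest frequencies} can be placed in $\I_2$ while the rest go in operator norm — and it is precisely this optimized assignment (built into \eqref{est HS'}, \eqref{est op'} and the frequency-ordered summation in the proof of Proposition~\ref{P:X_l thingy}) that extracts the decisive extra powers of $\vk^{-1}$. So the real work is organizing the commutations cleanly enough that the resulting operator product is manifestly of the form \eqref{Fell form} with $\ell=3$ and constants independent of $\vk$, and then invoking Proposition~\ref{P:X_l thingy} together with the $\vk^{-3}$ (and better) prefactor from \eqref{h1 nice}. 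Checking that $A\in\{\Id,\partial^2R_0(2\vk),\vk^2R_0(2\vk)\}$ contributes no positive power of $\vk$ — indeed each is $O(1)$ as a map between the weighted Sobolev spaces after absorbing $\psi$-commutators — is a routine but necessary verification.
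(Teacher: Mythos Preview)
Your main approach has a genuine gap: the expression $\psi^{12}h_1 A[h_1'h_1']$ is a pointwise product of scalar functions, not the diagonal of a single operator composition, and so cannot be cast in the form \eqref{Fell form} required by Proposition~\ref{P:X_l thingy}. The trace representation \eqref{hell as trace} works for $h_\ell$ precisely because $h_\ell(x)=\langle\delta_x,R_0(qR_0)^\ell\delta_x\rangle$ is the diagonal of one operator; by contrast, here you have a pointwise product of three functions, each built from $R_0(2\vk)q$, with $A$ applied in between. Writing $\int f\psi^{12}h_1 A[h_1'h_1']\,dx=\Tr\bigl\{f\psi^{12}A[h_1'h_1']\,R_0qR_0\bigr\}$ (using the diagonal form of $h_1$) still leaves $A[h_1'h_1']$ sitting as a multiplication operator whose symbol contains the other two copies of $q$; no operator chain of the type $A_1\phi_1q\, A_2\phi_2q\, A_3\phi_3q\, A_4$ emerges. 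Your subsidiary claim that $\partial R_0(\vk)$ is bounded $H^{-1}_\vk\to H^1_\vk$ with gain $\vk^{-1}$ is also false: the required multiplier bound is $\sup_\xi|\xi|$, which diverges.

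The paper instead works directly in physical space. One first checks via Schur's test that $\psi^8 A\psi^{-8}$ is $L^p$-bounded uniformly in $\vk$ for each admissible $A$, which allows $A$ to be absorbed after splitting $\psi^{12}=\psi^4\cdot\psi^8$. Writing $\psi^4 h_1=-\vk^{-1}[\psi^4 R_0(2\vk)\psi^{-4}](\psi^4 q)$ and $\psi^4 h_1'=-\vk^{-1}[\psi^4 R_0(2\vk)\psi^{-4}]\bigl((\psi^4 q)'-(\psi^4)'q\bigr)$, one decomposes each localized $q$ into Littlewood--Paley pieces and applies H\"older in $L^2_{t,x}\cdot L^2_{t,x}\cdot L^\infty_{t,x}$, placing the two highest-frequency factors in $L^2_{t,x}$. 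The two derivatives contribute a factor $N_1N_2$ via Bernstein, reducing matters to the sum
\[
\sum_{N_1\geq N_2\geq N_3}\frac{N_1N_2}{\vk^3}\,\|R_0(\phi_1 q)_{N_1}\|_{L^2_{t,x}}\|R_0(\phi_2 q)_{N_2}\|_{L^2_{t,x}}\|R_0(\phi_3 q)_{N_3}\|_{L^\infty_{t,x}},
\]
which one evaluates by summing in $N_3$ first (using only $\|q\|_{L^\infty_t H^{-1}}\lesssim\delta$) and then in $N_1,N_2$ via the split $N\lessgtr\vk^{1/3}$, invoking the local smoothing norm for high frequencies. This is the same optimization philosophy you describe, but executed in physical $L^p$ spaces; the trace-ideal machinery is simply unavailable for this term.
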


\begin{proof}
Schur's test shows that $\psi^8 A \frac{1}{\psi^8}$ is $L^p$-bounded for every $1\leq p \leq \infty$, for each choice of $A$, and uniformly for $\vk\geq 1$.

In a similar vein, we note that
\begin{align*}
\psi^4 h_1 = - \vk^{-1} \psi^4 R_0 \tfrac{1}{\psi^4} [\psi^4 q] \qtq{and}
	\psi^4 h_1' = - \vk^{-1} \psi^4 R_0 \tfrac{1}{\psi^4} [(\psi^4 q)' - (\psi^4)' q].
\end{align*}

Combining these observations with Lemma~\ref{L:w commutator} and Bernstein inequalities, we see that it suffices to show
\begin{align*}
\sum_{N_1\geq N_2\geq N_3 } \!\!\! \frac{N_1 N_2}{\vk^3}  \| R_0(\phi_1 q)_{N_1} \|_{L^2_{t,x}}  \| R_0(\phi_2 q)_{N_2} \|_{L^2_{t,x}}
	\| R_0 (\phi_3 q)_{N_3} \|_{L^\infty_{t,x}} \lesssim \text{RHS\eqref{E:hhh}},
\end{align*}
for any trio of Schwartz functions $\phi_i\in\{\psi^4, \partial_x\psi^4\}$.

We sum over $N_3$ first using 
\begin{align*}
\sum_{N_3\leq N_2} \| R_0 (\phi_3 q)_{N_3} \|_{L^\infty_{t,x}} \lesssim \sum_{N_3\leq N_2} \tfrac{N_3^{3/2}}{N_3^2+\vk^2} \| q \|_{L^\infty_t H^{-1}}
	\lesssim \vk^{-2} \delta N_2^{3/2} \lesssim \vk^{-2} \delta N_1^{3/4} N_2^{3/4}.
\end{align*}
To complete the proof, we substitute this into the above, and use 
\begin{align*}
\sum_{N}  N^{\frac74}\| R_0 (\phi_3 q)_{N} \|_{L^2_{t,x}} \lesssim \sum_{N \leq \vk^{1/3}} \!\! \tfrac{N^{\frac{11}4}}{\vk^2}\delta
	\, + \!\!\!\sum_{N \geq \vk^{1/3}} \!\! \tfrac{N^{\frac74}}{\vk N^2}\|q\|_{\vLS} \lesssim \vk^{-\frac{13}{12}} \bigl\{ \delta^2 + \| q \|_{\vLS}^2 \bigr\}
\end{align*}
to sum over $N_1$ and $N_2$ (we may now neglect the ordering).
\end{proof}

\begin{lemma}\label{L:h1h3}
For $q:[-1,1]\to B_\delta$ and $\delta$, we have
\begin{align}\label{E:h1h3}
\| \psi^{12} h_1 h_3 \|_{L^1_{t,x}} \lesssim \vk^{-8 - \frac13} \delta^2 \bigl\{ \delta^2 + \| q \|_{\LS}^2 \bigr\} .
\end{align}
\end{lemma}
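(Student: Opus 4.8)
The key structural fact is that $\psi^{12}h_1h_3$ is a product of two \emph{diagonal} Green's functions, and should therefore be estimated jointly on $L^2(\R)\otimes L^2(\R)$. Writing $u_x:=\sqrt{R_0(\vk)}\,\delta_x\in L^2(\R)$ and $T:=\sqrt{R_0(\vk)}\,q\,\sqrt{R_0(\vk)}$, the definition \eqref{E:h defn} reads $h_\ell(x)=(-1)^\ell\langle u_x,T^\ell u_x\rangle$, so for any $f\in L^\infty_{t,x}$ of unit norm,
\begin{equation*}
\iint \psi^{12}h_1h_3f\,dx\,dt=\int_{-1}^1 \Tr_{L^2(\R)\otimes L^2(\R)}\bigl\{(T\otimes T^3)\,\Pi_f\bigr\}\,dt,\qquad
\Pi_f:=\int\psi^{12}(x)f(x)\,|u_x\otimes u_x\rangle\langle u_x\otimes u_x|\,dx.
\end{equation*}
Here $\|q\|_{\vLS}$ will denote the local smoothing norm produced by the machinery of Section~\ref{S:para}; one reduces to the case where $\phi$ is a power of $\psi$ as usual.

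\textbf{The gain.} All the extra decay comes from $\Pi_f$. Since $\langle u_x,u_{x'}\rangle=\langle\delta_x,R_0(\vk)\delta_{x'}\rangle=\tfrac1{2\vk}e^{-\vk|x-x'|}$, a direct computation gives
\begin{equation*}
\|\Pi_f\|_{\I_2}^2=\Tr\{\Pi_f^*\Pi_f\}=\tfrac1{16\vk^4}\iint\psi^{12}(x)\psi^{12}(x')\,\overline{f(x)}f(x')\,e^{-4\vk|x-x'|}\,dx\,dx',
\end{equation*}
whence $\|\Pi_f\|_{L^\infty_t\I_2}\lesssim\vk^{-5/2}$. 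This is roughly two powers of $\vk^{-1}$ better than the size $\vk^{-1/2}$ of the corresponding localizing multiplier in the single-diagonal estimates, and it is precisely this improvement that yields the extra $\vk^{-1}$ in \eqref{E:h1h3} relative to the naive bound $\|h_1\|_{L^\infty}\|\psi^{12}h_3\|_{L^1}\lesssim\vk^{-7-\frac13}$, which is all that any H\"older split of $\|h_1h_3\|_{L^1}$ into norms of $h_1$ and $h_3$ can deliver.

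\textbf{Completing the estimate.} It remains to pair $T\otimes T^3$ (which carries all four copies of $q$ — one in $T$, three in $T^3$) against $\Pi_f\in\I_2$, in such a way that two of those copies recover the local smoothing norm. To do this one transfers the $\psi^{12}$-localization sitting in $\Pi_f$ onto the copies of $q$: since $\psi^{12}(x)u_x=\psi^{12}(x)\sqrt{R_0(\vk)}\,\delta_x$ and the kernel of $\sqrt{R_0(\vk)}$ decays like $e^{-\vk|x-x'|}$, one may commute powers of $\psi$ through the half-resolvents in the spirit of Lemma~\ref{L:w commutator}, arranging for a factor of $\psi$ to flank each $q$ (the power $12$ leaves ample room for the four copies and the commutator corrections). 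One then Littlewood--Paley decomposes all four copies of $q$, applies H\"older in the Schatten classes placing the two highest-frequency copies in $\I_2$ — invoking \eqref{est HS'} to bring in $\|q\|_{\vLS}$ — the remaining two in $\I_\infty$ via \eqref{est op'}, and $\Pi_f$ in $\I_2$, with time exponents $2,2,\infty,\infty,\infty$ (or similar) adding to $1$. What is left is a sum over four dyadic frequencies of the same flavour as in the proofs of Propositions~\ref{P:X_l thingy} and~\ref{P:Y_l thingy}; split over the regimes $N\lesssim\vk^{1/3}$, $\vk^{1/3}\lesssim N\lesssim\vk$, $N\gtrsim\vk$, it should close and combine with the $\vk^{-5/2}$ from $\Pi_f$ to give $\vk^{-8-\frac13}\delta^2\{\delta^2+\|q\|_{\vLS}^2\}$, with a little room to spare.

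\textbf{Main obstacle.} The delicate step is the combination of the two requirements: the weight transfer across the tensor product must be performed so that \emph{exactly} two copies of $q$ end up carrying $\|q\|_{\vLS}$ (otherwise one exceeds the quadratic dependence on $\|q\|_{\vLS}$ allowed by the right-hand side), while the frequency sum — now with the extra $\I_2$ factor $\Pi_f$ occupying one Schatten slot — must still be run optimally so as to preserve the full gain $\vk^{-5/2}$ coming from $\Pi_f$. As flagged in the text, nothing that treats $h_1$ and $h_3$ in isolation can achieve this.
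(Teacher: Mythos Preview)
Your tensor-product reformulation is correct as a representation, but the argument has a genuine gap: the two sources of gain you invoke are mutually exclusive. The bound $\|\Pi_f\|_{\I_2}\lesssim\vk^{-5/2}$ depends entirely on the factor $\psi^{12}$ sitting inside $\Pi_f$ to make the $x$-integral converge; once you ``transfer the $\psi^{12}$-localization onto the copies of $q$'' as you propose, the remaining operator $\tilde\Pi_f=\int f(x)\,|u_x\otimes u_x\rangle\langle u_x\otimes u_x|\,dx$ has $\|\tilde\Pi_f\|_{\I_2}^2\approx\vk^{-5}\int|f|^2=\infty$ for generic $f\in L^\infty$. So you cannot both localize the $q$'s (needed to access $\|q\|_{\vLS}$) and keep $\Pi_f\in\I_2$. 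Moreover, the tensor factorization is rigidly $1+3$: the Schatten norms satisfy $\|T\otimes T^3\|_{\I_p}=\|T\|_{\I_p}\|T^3\|_{\I_p}$, so there is no mechanism for placing ``the two highest-frequency copies in $\I_2$'' when one of them may live in the first tensor slot and the other in the second. Without weight transfer, the best the tensor approach gives is $\|T\|_{\I_2}\|T^3\|_{\I_2}\|\Pi_f\|_{\I_2}\lesssim\vk^{-9/2}\delta^4$, far short of the target.

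The paper's argument avoids the tensor product entirely and stays on a single copy of $L^2(\R)$. The extra decay does not come from any diagonal structure but simply from the explicit smoothing in $h_1=-\vk^{-1}R_0(2\vk)q$: one writes $\psi^3 h_1=\sum_N u_N$ with $u_N=-\vk^{-1}[\psi^3 R_0(2\vk)\psi^{-3}](\psi^3 q)_N$, and observes that the operator $\sqrt{R_0}\,u_N f\,\sqrt{R_0}$ obeys the same estimates \eqref{est HS'}--\eqref{est op'} as $\sqrt{R_0}(\phi q)_N\sqrt{R_0}$ but with an additional factor of $\vk^{-3}$ (one power from the explicit $\vk^{-1}$, two from $R_0(2\vk)$ acting on frequency-localized inputs). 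The trace then becomes a genuine four-factor product on $L^2(\R)$, and the frequency sum is literally the $\ell=4$ case of Proposition~\ref{P:X_l thingy}, yielding $\vk^{-3}\cdot\vk^{-\frac43-6}=\vk^{-8-\frac13}$. The point you correctly identified --- that no H\"older split of $h_1$ from $h_3$ suffices --- is resolved not by coupling the two diagonals but by treating $h_1$ as a fourth (extra-smooth) input to the same single-diagonal paraproduct machinery.
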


\begin{proof}
As in the proof of Corollary~\ref{C:h in L^1}, we employ duality, writing LHS\eqref{E:h1h3} as
\begin{align}\label{6:59}
\sup_f \int_{-1}^1 \Tr\Bigl\{ \sqrt{R_0}\, \psi^{3} h_1 f \bigl[\psi^9 R_0 \tfrac{1}{\psi^9}\bigr] \psi^3 q \bigl[\psi^6 R_0 \tfrac{1}{\psi^6}\bigr]  \psi^3 q \bigl[\psi^3 R_0 \tfrac{1}{\psi^3}\bigr] \psi^3 q \sqrt{R_0}\,\Bigr\} \,dt
\end{align}
where the supremum is over $f\in L^\infty([-1,1]\times \R)$ of unit norm.

To proceed, we decompose $\psi^3 h_1 = \sum_N u_N$ with
$$
u_N := - \vk^{-1} \psi^3 R_0(2\vk) \tfrac{1}{\psi^3} [ (\psi^3 q)_{N} ].
$$
Using Lemmas~\ref{L:w commutator} and~\ref{L:HS}, we deduce the following analogues of \eqref{est HS'} and \eqref{est op'}:
\begin{gather*}
\bigl\| \sqrt{R_0}\,u_N f\sqrt{R_0}\,\bigr\|_{L^2_t \I_2} \lesssim \vk^{-\frac32} \| u_N \|_{L^2_{t,x}}
	\lesssim \vk^{-3} \min\{\vk^{-\frac32}N, \vk^{-\frac12}N^{-2} \}\bigl[ \delta + \|q\|_{\vLS} \bigr] \\
\bigl\| \sqrt{R_0}\,u_N f \sqrt{R_0}\,\bigr\|_{L^\infty_t \I_\infty}
	\lesssim \vk^{-2} \| u_N\|_{L^\infty_{t,x}} \lesssim \vk^{-3} \min\{ \vk^{-2} N^{\frac32}, \vk^{-\frac12} \} \delta . 
\end{gather*}

Returning to \eqref{6:59}, we divide each copy of $\psi^3 q$ into its Littlewood--Paley pieces, yielding a sum over four frequencies.  As in the proof of Corollary~\ref{C:h in L^1}, we now apply H\"older's inequality in trace ideals, placing the two highest frequencies pieces in $L^2_t \I_2$ and the remainder in $L^\infty\I_\infty$.   But for the prefactor $\vk^{-3}$, the resulting sum is exactly that appearing in the proof of Proposition~\ref{P:X_l thingy} when $\ell=4$ and so the result follows from the computations given there.
\end{proof}

\section{Local smoothing}\label{S:LS}

This section is primarily devoted to the proof of the following: 

\begin{proposition}\label{P:LS5}
Fix $\delta$ sufficiently small.  For any initial data $q(0) \in \mathcal{S}(\R) \cap B_\delta$, the corresponding solution $q(t)$ to \eqref{5th} satisfies
\begin{equation}\label{E:P:LS5}
\| q \|_{\vLS}^2 \lesssim  \| q(0) \|_{H^{-1}_\vk}^2 + \vk^{-\frac16} \delta^2
\end{equation}
uniformly for $\vk\geq 1$.  For the definition of $\vLS$, see \eqref{E:D:LS}.
\end{proposition}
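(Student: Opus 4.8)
The plan is to localize the microscopic conservation law \eqref{mscl} in space against the weight $\psi_z^{12}$, integrate in time over $[-1,1]$, and extract the local smoothing norm from the coercive term that results. Concretely, I would multiply \eqref{mscl} by $\psi_z^{12}$ and integrate over $[t_1,t]\times\R$ for a well-chosen base time $t_1\in[-1,1]$; the flux term produces $\int_{t_1}^t\int (\psi_z^{12})' j_{\text{5th}}\,dx\,dt'$, while the density term produces $\int \psi_z^{12}[\rho(t)-\rho(t_1)]\,dx$, which is controlled by $\|q\|_{L^\infty_t H^{-1}_\vk}^2\lesssim \|q(0)\|_{H^{-1}_\vk}^2$ via \eqref{E:rho alpha} and Proposition~\ref{P:H-1 bound}. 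The heart of the matter is to identify, inside $\int (\psi_z^{12})' j_{\text{5th}}$, the term that reproduces (a positive multiple of) $\|(\psi_z^6 q)''\|_{L^2_t H^{-1}_\vk}^2$, up to errors that are either of lower order or carry a gain of $\vk^{-1/6}$. Taking the supremum over $z$ and over $t$ then yields \eqref{E:P:LS5}.

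The extraction of the coercive term uses the structure of $j_{\text{5th}}$ in \eqref{E:rho dot 5}. Recalling $\rho = 2\vk^2 - \vk/g + 4\vk^2 R_0(2\vk)q$ and using the change of variables $1/g = 2\vk - \tfrac{1}{g}[\cdots]$ refinements from Lemma~\ref{L:g up}, one rewrites $-\tfrac{2\vk}{g}[16\vk^5 g - 8\vk^4 + 4\vk^2 q + q'' - 3q^2]$ modulo good error terms. The key algebraic point is that $-\tfrac{2\vk}{g}\cdot q''$ together with the $4\vk^2 R_0(2\vk)$ applied to $q^{(4)}$ should, after two integrations by parts against $(\psi_z^{12})'$ and symbol manipulations of the type exhibited in Lemma~\ref{L:series expansion}, combine to something comparable to $\int (\xi^2+4\vk^2)^{-1}|\xi^2\widehat{\psi_z^6 q}|^2$. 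The multiplier identities $4\vk^2[-16\vk^4/(\xi^2+4\vk^2)+4\vk^2-\xi^2] = -\xi^4 + \xi^6/(\xi^2+4\vk^2)$ from the proof of Lemma~\ref{L:series expansion} are exactly what converts the combination $16\vk^5 h_1 + 4\vk^2 q + q''$ into the coercive $4\vk^3 h_1^{(4)}$; carrying the localizing weight through this, while controlling the commutators $[\psi_z^{12}, R_0]$ via Lemmas~\ref{L:basic_commutator}, \ref{L:w commutator}, and \ref{L:2xC}, is the route to the positive term. The remaining pieces of $j_{\text{5th}}$ --- the $16\vk^5 g(\vk)$, $-8\vk^4$, $4\vk^2 q$ terms multiplied by $\tfrac{2\vk}{g}$, and the $R_0(2\vk)[5(q')^2+10q^3 -5(q^2)'']$ term --- are treated as error: the quadratic-in-$q$ and cubic-in-$q$ contributions are handled by Corollaries~\ref{C:h in L^1}, \ref{C:h in H^1}, Proposition~\ref{P:h1}, and Lemmas~\ref{L:hhh}, \ref{L:h1h3}, all of which deliver powers $\vk^{-7-}$ or better against $\{\delta^2+\|q\|_{\vLS}^2\}$, hence after one uses $\delta^2+\|q\|_{\vLS}^2$ can be absorbed (see below) they contribute $O(\vk^{-1/6}\delta^2)$; the linear term $\tfrac{2\vk}{g}\cdot(16\vk^5 g - 8\vk^4 + 4\vk^2 q)$ is genuinely linear plus $h_1$-corrections and is estimated by Proposition~\ref{P:h1} and \eqref{E:h1 exp}, again with a large negative power of $\vk$.

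There is a bootstrap subtlety: the coercive term is $\|q\|_{\vLS}^2$ itself, and several error terms are bounded by $\vk^{-\epsilon}\{\delta^2+\|q\|_{\vLS}^2\}$; the $\vk^{-\epsilon}\|q\|_{\vLS}^2$ piece must be absorbed into the left side. This requires knowing a priori that $\|q\|_{\vLS}$ is finite (true for Schwartz solutions of the genuinely nonlinear, but globally well-posed in $H^2$, equation \eqref{5th}) so that the absorption is legitimate; one takes $\vk$ large enough that $\vk^{-\epsilon}$ times the implicit constant is $<\tfrac12$, and handles small $\vk$ (bounded range) by the crude $H^2$ bounds together with continuity in $\vk$. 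The choice of base time $t_1$: one picks $t_1$ so that $\int\psi_z^{12}\rho(t_1)\,dx$ is comparable to its time-average, guaranteeing the density difference is $O(\|q(0)\|_{H^{-1}_\vk}^2)$ without extra loss. The main obstacle is unquestionably the algebra of step two --- isolating the precise coercive quadratic form from the localized flux while showing every commutator error and every genuinely nonlinear term gains at least $\vk^{-1/6}$; this is where the "number of subtle cancellations" the authors warn about in the introduction concentrates, and it is the reason Section~\ref{S:LS} exists. I would organize it by first writing the localized flux identity exactly, then peeling off the coercive term, then dispatching the error terms in decreasing order of difficulty: commutator errors, then the $q''$-versus-$q^{(4)}$ cross terms, then the quadratic $h_\ell$ terms, then the cubic and higher terms.
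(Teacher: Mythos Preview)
Your overall framework (localize the microscopic conservation law, extract a coercive quadratic form, bootstrap the errors) matches the paper's, but the proposal contains two genuine errors that would make the argument fail.

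First, you integrate the conservation law against the bump $\psi_z^{12}$, which makes the flux contribution $\int (\psi_z^{12})'\, j_{\text{5th}}$. The weight $(\psi_z^{12})'$ is odd and changes sign, so no coercive quadratic form can emerge from it. The paper instead integrates against the monotone weight $\Psi(x)=\int_x^\infty \psi^{12}$, so that the flux contribution is $-\int \psi^{12}\, j_{\text{5th}}$ with a sign-definite weight; this is the standard Kato setup and is essential.

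Second, and more seriously, you have misidentified the coercive term. You claim it comes from the combination $-\tfrac{2\vk}{g}q'' - 4\vk^2 R_0(2\vk) q^{(4)}$, but to leading order in $\delta$ this is \emph{linear} in $q$ (since $\tfrac{2\vk}{g}\approx 4\vk^2$), and integrating a linear expression against $\psi^{12}$ cannot produce $\|(\psi^6 q)''\|_{H^{-1}_\vk}^2$. In the paper the coercivity comes from the \emph{quadratic} constituents of the current, collected in
\[
j_1 = 4\vk^2[3q^2 - 16\vk^5 h_2(\vk)] - 4\vk^2 R_0(2\vk)[-5(q^2)'' + 5(q')^2] + 8\vk^4 h_1 h_1^{(4)},
\]
and Lemma~\ref{L:2for2} shows (after substantial cancellation via \eqref{E:h2 exp} and the double-commutator Lemma~\ref{L:2xC}) that $-\int j_1\psi^{12}$ equals $5\int\langle(\psi^6 q)'',R_0(\psi^6 q)''\rangle$ up to acceptable errors. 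The terms you dismissed as errors---in particular $16\vk^5 h_2$ and $R_0(2\vk)[5(q')^2 - 5(q^2)'']$---are precisely where the coercivity lives. Conversely, the linear combination you highlighted feeds into the error term $E_1$ via \eqref{E:h1 exp}, and the dominant cubic error $j_2 = -64\vk^7 h_3 - 10q^3$ requires its own delicate cancellation argument (Lemma~\ref{L:cubic}), which your proposal does not address.
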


Later in Corollary~\ref{C:LS5}, we will observe that this yields \eqref{E:T:LS} as an a priori bound for initial data in $\mathcal{S}(\R) \cap B_\delta$.  This can then be extended to large Schwartz data by employing the scaling \eqref{E:scaling}.  

Following the pattern introduced already by Kato in \cite{MR0759907}, we will be proving the local smoothing estimate by localizing the microscopic conservation law \eqref{mscl}.  While the dominant term in the current will reproduce LHS\eqref{E:P:LS5} nicely, a wide variety of the errors arising in our analysis must be bootstrapped.  This necessitates a smallness parameter.  For terms cubic and higher in $q$, the parameter $\delta$ could be used; however, there are also quadratic error terms (arising from commutators) and so an alternate source of smallness is required.  This smallness will be obtained by requiring $\vk\geq \vk_0$ for some absolute $\vk_0\gg1$.  This restriction can be removed at the end by employing the equivalence of norms (individually for $H^{-1}_\vk$ and $\vLS$) as $\vk\geq 1$ varies over bounded intervals.

To proceed, we let $\Psi(x)=\int_x^\infty \psi(x')^{12}\,dx'$ and so observe that
\begin{equation}\label{E:int micro}
\int [\rho(1,x) - \rho(-1,x)] \Psi(x)\,dx = - \int_{-1}^1 \int_{-\infty}^\infty j_\text{5th}(t,x) \psi(x)^{12}\,dx\,dt,
\end{equation}
where $j_\text{5th}$ is defined in \eqref{E:rho dot 5}.
Note that we have omitted the translation parameter $z$ that appears in the definition of $\vLS$ to avoid cumbersome notation; it will be recovered at the end by employing the translation symmetry of solutions.

Let us begin our analysis by identifying two key constituents of the current $j_\text{5th}$:
\begin{align*}
j_1 &:= 4\vk^2 [ 3 q^2  - 16\vk^5 h_2(\vk) ] -  4\vk^2 R_0(2\varkappa) \big[ - 5 (q^2)'' + 5 (q')^2\big] + 8\vk^4 h_1(\vk) h_1^{(4)}(\vk)  \\
j_2 &:= -64\vk^7 h_3(\vk) - 10 q^3 .
\end{align*}
The first part collects all quadratic terms and will be the source of the coercivity we seek.  The second current represents the  dominant cubic error term; extensive analysis will be required in order to illustrate the main cancellation therein and then control the remainder.

The next lemma controls the contributions of the remaining parts of $j_\text{5th}$ in a satisfactory manner.

\begin{lemma}\label{LS:j3}
For $q:[-1,1]\to B_\delta\cap\Schw(\R)$ and $\delta$ sufficiently small,
\begin{gather*}
\biggl| \int_{-1}^1 \int_{-\infty}^\infty [j_\text{\rm 5th}-j_1-j_2](t,x) \psi(x)^{12}\,dx\,dt \biggr|
	\lesssim \vk^{-\frac16} \bigr\{ \delta^2 + \|q\|_{\vLS}^2 \bigr\} .
\end{gather*}
\end{lemma}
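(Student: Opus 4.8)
The plan is to write out $j_\text{5th} - j_1 - j_2$ explicitly from the definitions \eqref{E:rho dot 5}, \eqref{E:h1 exp}, \eqref{E:h2 exp}, and the definitions of $j_1,j_2$, and verify that every surviving term is either cubic (or higher) in $q$, or carries a high enough negative power of $\vk$ from the $R_0(2\vk)$ prefactor, to be estimated by the paraproduct bounds of Section~\ref{S:para} together with Lemma~\ref{L:change}. First I would record the decomposition $j_\text{5th} = -\tfrac{2\vk}{g}[16\vk^5 g - 8\vk^4 + 4\vk^2 q + q'' - 3q^2] - 4\vk^2 R_0(2\vk)[q^{(4)} - 5(q^2)'' + 5(q')^2 + 10 q^3]$. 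The first bracket is handled by writing $\tfrac{1}{g} = 2\vk - \tfrac{4\vk^2 h_1}{1+2\vk h_1} + E$, where $E$ is the quantity estimated in Lemma~\ref{L:g up}, and expanding $g = \tfrac1{2\vk} + h_1 + h_2 + \sum_{\ell\geq 3} h_\ell$. The terms in the resulting product that are \emph{not} quadratic and \emph{not} already collected into $j_1$ are: (i) the contribution of $E$ times the bracket; (ii) the contribution of $\sum_{\ell\ge 3}h_\ell$; and (iii) genuinely cubic-and-higher pieces like $h_1\cdot q^2$, $h_1 \cdot q$, $h_1\cdot h_1^{(4)}$ paired against $h_\ell$ for $\ell\geq 1$, etc. The quadratic part of $-\tfrac{2\vk}{g}[\cdots]$, after using the symbol identity $4\vk^2[16\vk^5 h_1 + 4\vk^2 q + q''] = 4\vk^3 h_1^{(4)}$ from \eqref{E:h1 exp} and the $h_2$ identity \eqref{E:h2 exp}, is arranged to cancel exactly against the quadratic terms retained in $j_1$ (this is the content of Lemma~\ref{L:series expansion} and is \emph{why} $j_1$ is defined as it is); so only the non-quadratic remainder is left, and that is what Lemma~\ref{LS:j3} asserts can be absorbed.

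The key steps, in order: (1) Subtract the definitions and use Lemma~\ref{L:series expansion} to cancel the quadratic terms, leaving an explicit finite list of error terms. (2) For the $E$-contribution, pair $\|\psi^6 E\|_{L^2_t H^1_\vk} \lesssim \vk^{-7/6}\delta\{\delta + \|q\|_{\vLS}\}$ from Lemma~\ref{L:g up} against the $L^2_t H^{-1}_\vk$ norm of $\psi^6[16\vk^5 g - 8\vk^4 + 4\vk^2 q + q'' - 3q^2]$; the dangerous factor $16\vk^5 g - 8\vk^4 = 16\vk^5(g - \tfrac1{2\vk}) = 16\vk^5\sum_{\ell\geq1} h_\ell$ is controlled using \eqref{E:h1 other} and Corollary~\ref{C:h in H^1} (note $16\vk^5 h_1 = -4\vk^4 h_1 \cdot(\text{symbol})$ becomes $\sim \vk^2 \partial^2$-type and needs \eqref{change s} / Lemma~\ref{L:change} with the extra $\vk$ powers from \eqref{E:h1 with s}), the $4\vk^2 q$ and $q''$ terms by Lemma~\ref{L:change} (giving $\vk^{4/3}$ and $\vk^0$ respectively, offset against $\vk^{-7/6}$... here one must check the arithmetic carefully since $\vk^{-7/6}\cdot\vk^{4/3}=\vk^{1/6}$ is \emph{not} small — so the $4\vk^2 q$ term cannot be paired this way and must instead have been absorbed into $j_1$ or cancelled; I expect $j_1$'s retained quadratic terms are precisely chosen so this happens, and $E$ only multiplies the \emph{cubic} residue $3q^2$ plus higher $h_\ell$, not the bare $4\vk^2 q$). (3) For each explicitly cubic-or-higher term of the form $\psi^{12} h_a h_b$ or $\psi^{12} h_1 A[\text{quadratic in }h_1]$ or $\psi^{12} h_1 q^2$, apply the relevant result: Corollary~\ref{C:h in L^1} for $\psi h_3$ and $\sum_{\ell\geq4}\psi h_\ell$, Lemma~\ref{L:hhh} for the $h_1 A[h_1'h_1']$ type, Lemma~\ref{L:h1h3} for $\psi^{12}h_1 h_3$, and Proposition~\ref{P:X_l thingy} (after commuting localizing weights next to each copy of $q$ via Lemma~\ref{L:w commutator}) for the remaining $h_1 \cdot q^2$-flavored pieces. (4) For the piece of the second line of $j_\text{5th}$, namely $-4\vk^2 R_0(2\vk)[q^{(4)} - 5(q^2)'' + 5(q')^2 + 10q^3]$, observe that all of this except the part collected into $j_1$ (the $-5(q^2)'' + 5(q')^2$ terms) and into $j_2$ (nothing — $10q^3$ does go here, so actually $10q^3$ must be examined) contributes; in particular $q^{(4)}$ combines with $h_1$-terms via \eqref{E:h1 exp}: $-4\vk^2 R_0(2\vk) q^{(4)} = 4\vk^3 h_1^{(4)} - (\text{lower})$, wait — $4\vk^3 h_1^{(4)} = -q^{(4)} + \vk h_1^{(6)}$, so $-4\vk^2 R_0(2\vk)q^{(4)} = \vk^2 h_1^{(4)}$ roughly, which again must cancel against what is retained; the clean way is that the combination $4\vk^2[16\vk^5 h_2 + 4\vk^2\cdots]$ minus these $R_0(2\vk)$ terms is exactly \eqref{E:h2 exp} rearranged. (5) Finally, reinsert the translation parameter $z$ by translation-invariance (all estimates above are uniform in $z$ since $q$'s $\vLS$ and $L^\infty_t H^{-1}$ norms are translation-invariant) and sum; pull the factor $\vk^{-1/6}$ out front as the worst surviving power, absorbing all the better powers $\vk^{-5/6}, \vk^{-7/3}, \vk^{-7/6}, \ldots$ into it since $\vk\geq 1$.

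\textbf{Main obstacle.} The real work — and where I expect to spend the most care — is step (1) combined with the bookkeeping in step (2): making sure that after subtracting $j_1$ and $j_2$ and applying the symbol identities of Lemma~\ref{L:series expansion}, \emph{no} term survives that is quadratic in $q$ with a prefactor worse than $\vk^{-1/6}$, and in particular that the bare linear-in-$q$ factors like $4\vk^2 q$ and $q''$ inside the Green's-function bracket either cancel exactly or only ever appear multiplied by something carrying enough smallness (a copy of $h_\ell$, a copy of $E$, or an $R_0(2\vk)\sim\vk^{-2}$). This is precisely the ``subtle cancellation'' the introduction warns about: it is an algebraic identity, true by construction of $j_1,j_2$, but verifying it requires expanding $-\tfrac{2\vk}{g}[\,\cdots\,]$ to second order in the Neumann series and matching every term against \eqref{h1 nice}, \eqref{h2 nice}, \eqref{E:h1 exp}, \eqref{E:h2 exp}. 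Once the cancellation is confirmed, the remaining estimates are a routine (if lengthy) bookkeeping exercise in applying the Section~\ref{S:para} toolkit, and the final power $\vk^{-1/6}$ is dictated by the worst of those, namely the $E$-against-$3q^2$ pairing via Lemma~\ref{L:g up}, which on its own gives $\vk^{-7/6}\cdot\vk^{?}$ and I'd want to double-check whether that is the true bottleneck or whether a quadratic commutator term (the ones requiring $\vk\geq\vk_0$) is the one producing exactly $\vk^{-1/6}$.
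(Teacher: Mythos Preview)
Your overall plan is correct and matches the paper's proof closely: expand $g$ in the Neumann series, organize the current by $\ell$-level, and estimate each residual piece with the paraproduct toolbox of Section~\ref{S:para}. You have also correctly identified where the difficulty lies. However, the specific mechanism that resolves your ``main obstacle'' is not the one you conjecture, and without it step~(2) of your plan would indeed fail.

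The key identity you are missing is this: the linear-in-$q$ terms from the \emph{two rows} of $j_\text{5th}$ combine. Using $-4\vk^2 R_0(2\vk)q^{(4)} = 4\vk^3 h_1^{(4)}$ (from \eqref{h1 nice}) together with \eqref{E:h1 exp}, one has
\[
-\tfrac{2\vk}{g}\bigl[ 16\vk^5 h_1 + 4\vk^2 q + q'' \bigr] - 4\vk^2 R_0(2\vk) q^{(4)}
= \vk\bigl[4\vk^2 - \tfrac{2\vk}{g}\bigr] h_1^{(4)}.
\]
Thus the prefactor $\tfrac{2\vk}{g}$ never multiplies a bare $4\vk^2 q$; instead it multiplies $\vk h_1^{(4)}$. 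After peeling off the quadratic part $8\vk^4 h_1 h_1^{(4)}$ (which is exactly the last term retained in $j_1$), what remains is $2\vk^2\bigl[2\vk - \tfrac{1}{g} - 4\vk^2 h_1\bigr] h_1^{(4)}$, and now Lemma~\ref{L:g up} pairs $E$ against $\psi^6 h_1^{(4)}$ in $H^{-1}_\vk$; since $\|\psi^6 h_1''\|_{L^2_t H^1_\vk}\lesssim\vk^{-1}\{\delta+\|q\|_{\vLS}\}$ by \eqref{E:h1 other}, the arithmetic becomes $\vk^2\cdot\vk^{-7/6}\cdot\vk^{-1}=\vk^{-1/6}$, which closes. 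The remaining piece $\tfrac{16\vk^5 h_1^2}{1+2\vk h_1} h_1^{(4)}$ is integrated by parts twice and dispatched with Proposition~\ref{P:h1}. So your worry in step~(2) is well-founded, but the resolution is an \emph{inter-row} cancellation, not that $E$ avoids multiplying $4\vk^2 q$ by bookkeeping alone.

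A second minor correction: for $\ell=3$, the term $-4\vk^2 R_0(2\vk)[10q^3]$ does not go directly into $j_2$; rather $j_2$ contains $-10q^3$ without the $R_0$, and the discrepancy $10[\Id-4\vk^2 R_0(2\vk)]q^3 = -10\partial^2 R_0(2\vk) q^3$ is a separate error ($E_{3b}$ in the paper) handled by the $\vk^{-2}$ gain from $\partial^2 R_0(2\vk)$ acting on $\psi^{12}$. Similarly, the $-\tfrac{2\vk}{g}\cdot 16\vk^5 h_3$ term contributes both $j_2$'s $-64\vk^7 h_3$ piece and a residual $32\vk^6[\tfrac1g-2\vk]h_3$ ($E_{3a}$), for which Lemma~\ref{L:h1h3} is needed. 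With these two fixes your program goes through; the bottleneck $\vk^{-1/6}$ indeed arises from the $E$-against-$h_1^{(4)}$ pairing and from the $E_{2c}$-type term handled by Lemma~\ref{L:hhh}.
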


\begin{proof}
Let us begin by recalling the definition of $j_\text{5th}$:
\begin{align*}
j_\text{5th} &= - \tfrac{2\vk}{g(\vk)} \bigl\{ 16\vk^5 \bigl[g(\vk)-\tfrac{1}{2\vk}\bigr] + 4 \vk^2 q + q'' - 3 q^2 \bigr\} \\
&\quad -  4\vk^2 R_0(2\varkappa) \big[q^{(4)} - 5 (q^2)'' + 5 (q')^2 + 10 q^3\big].
\end{align*}
To proceed, we expand out $g(\vk)$ using the series \eqref{E:g defn} and discuss the main calculations attendant to each value of $\ell$ in succession.  In this way, we will `discover' the terms in $j_1$ and $j_2$ as we progress and identify four key quantities to be estimated, which we label $E_1,\ldots,E_4$.  As there is no other energy parameter under consideration, we omit the argument $\vk$ from both $g$ and $h_\ell$ below. 

We start with $\ell=1$.  From \eqref{E:h1 exp} and \eqref{h1 nice},
$$
\tfrac{2\vk}{g} [  - 4 \vk^2 q - q'' - 16\vk^5 h_1 ] -  4\vk^2 R_0(2\vk) q^{(4)} = \vk \bigl[4\vk^2 - \tfrac{2\vk}{g} \bigr] h_1^{(4)}.
$$
Removing the quadratic term $8\vk^4 h_1 h_1^{(4)}$ which appears in $j_1$, we are left to estimate
\begin{align}\label{residue 1}
E_1 := \iint 2\vk^2 \bigl[2\vk -\tfrac{1}{g} - 4\vk^2 h_1 \bigr] h_1^{(4)} \psi^{12}\,dx\,dt .
\end{align}

We turn now to $\ell=2$ and consider the contribution of 
$$
\tfrac{2\vk}{g(\vk)} [ 3 q^2  - 16\vk^5 h_2 ] -  4\vk^2 R_0(2\varkappa) \big[ - 5 (q^2)'' + 5 (q')^2\big].
$$
Setting aside those terms which appear in $j_1$, we are left to estimate
\begin{align}\label{residue 2}
E_2 := \iint 2\vk \bigl[\tfrac{1}{g}  - 2\vk \bigr] [ 3 q^2  - 16\vk^5 h_2 ] \psi^{12}\,dx\,dt .
\end{align}

For $\ell=3$, we consider the combination
$$
\tfrac{2\vk}{g(\vk)} [ - 16\vk^5 h_3 ] -  4\vk^2 R_0(2\varkappa) \big[ 10 q^3\big].
$$
Setting aside the contribution of $j_2$, we are left to estimate
\begin{align}\label{residue 3a}
E_{3a} :=  - \iint 32\vk^6 \bigl[\tfrac{1}{g}  - 2\vk \bigr] h_3  \psi^{12}\,dx\,dt
\end{align}
and
\begin{align}\label{residue 3b}
E_{3b} :=  10 \iint \Bigl(\bigl[\Id -  4\vk^2 R_0(2\varkappa) \bigr] q^3 \Bigr) \psi^{12}\,dx\,dt .
\end{align}

Finally, we consider those $\ell\geq 4$.  This yields
$$
E_4:= \sum_{\ell\geq 4} \iint \tfrac{2\vk}{g(\vk)} [ - 16\vk^5 h_\ell ] \psi^{12}\,dx\,dt,
$$
which is easily estimated:  By \eqref{E:h in Linfty} and Corollary~\ref{C:h in L^1},
\begin{align*}
|E_4| \lesssim \vk^6 \bigl\| \tfrac{1}{g} \bigr\|_{L^\infty} \sum_{\ell\geq 4}  \bigl\| \psi^{12} h_\ell \bigr\|_{L^1_{t,x}}
	\lesssim  \vk^{-\frac{1}{3}} \delta^{2} \bigr\{ \delta^2 + \|q\|_{\vLS}^2 \bigr\}.
\end{align*}

We now turn our attention to estimating the remaining three error terms, beginning with $E_1$.  From \eqref{E:L:g up} and Proposition~\ref{P:h1},
\begin{align}\label{residue 1a}
\biggl| \iint 2\vk^2 \bigl[\tfrac{1}{g} - 2\vk + \tfrac{4\vk^2 h_1}{ 1+2\vk h_1 }\bigr] h_1^{(4)} \psi^{12}\,dx\,dt \biggr|
	\lesssim \vk^{-\frac16} \bigr\{ \delta^2 + \|q\|_{\vLS}^2 \bigr\} .
\end{align}
Thus the estimation of $E_1$ is reduced to controlling
\begin{align}\label{residue 1b}
E_{1a} = \iint  \tfrac{16\vk^5}{ 1+2\vk h_1 } h_1^2 h_1^{(4)} \psi^{12}\,dx\,dt.
\end{align}
Integrating by parts twice and employing \eqref{E:h in Linfty} shows
\begin{align*}
|E_{1a}|  &\lesssim \vk^{7/2} \| \psi h_1 '' \|_{L^2_{t,x}}^2 + \vk^5 \| \psi h_1 '' \|_{L^2_{t,x}} \| \psi h_1' \|_{L^4_{t,x}}^2 \\
&\quad +  \vk^{7/2} \| \psi h_1 '' \|_{L^2_{t,x}} \| \psi h_1' \|_{L^2_{t,x}}
	+ \vk^{7/2}  \| \psi h_1 '' \|_{L^2_{t,x}} \| \psi h_1 \|_{L^2_{t,x}}.
\end{align*}
In view of the results of Proposition~\ref{P:h1}, it follows that
\begin{align*}
|E_{1a}|  &\lesssim \vk^{-\frac12} \bigr\{ \delta^2 + \|q\|_{\vLS}^2 \bigr\} ,
\end{align*}
which is acceptable.

We now turn our attention to $E_2$, which we break into three pieces:
\begin{align*}
E_{2a} &= \iint 2\vk \bigl[\tfrac{1}{g}  - 2\vk \bigr] [ 3 \vk^2 (h_1'')^2 ] \psi^{12}\,dx\,dt \\
E_{2b} &= \iint 2\vk \bigl[\tfrac{1}{g}  - 2\vk + \tfrac{4\vk^2 h_1}{ 1+2\vk h_1 } \bigr] [ 3 q^2  - 16\vk^5 h_2 - 3 \vk^2 (h_1'')^2] \psi^{12}\,dx\,dt \\
E_{2c} &= \iint 2\vk \bigl[- \tfrac{4\vk^2 h_1}{ 1+2\vk h_1 } \bigr] [ 3 q^2  - 16\vk^5 h_2 - 3 \vk^2 (h_1'')^2] \psi^{12}\,dx\,dt  .
\end{align*}
The first of these is easily estimated via \eqref{E:h in Linfty} and Proposition~\ref{P:h1}:
\begin{align*}
|E_{2a}| &\lesssim \vk^3 \bigl\| \tfrac{1}{g}  - 2\vk \bigr\|_{L^\infty_{t,x}}  \bigl\| \psi h_1 ''  \bigr\|_{L^2_{t,x}}^2 
	\lesssim \vk^{-\frac12} \bigr\{ \delta^2 + \|q\|_{\vLS}^2 \bigr\} .
\end{align*}
On the other hand, from \eqref{E:h2 exp} we have
\begin{equation}\label{hex again}
\begin{aligned}
3q^2 -  16 \vk^5 h_2 - 3 \vk^2 (h_1'')^2&= - 4 \vk^4 \bigl[ 10 h_1 h_1''  + 5 (h_1')^2 \bigr] \\
&\quad - 4 \vk^4 \partial_x^2 R_0(2\vk) \bigl[ 4h_1h_1'' + 5 (h_1')^2 \bigr].
\end{aligned}
\end{equation}
Using Proposition~\ref{P:h1}, we may then deduce that
\begin{align*}
\| 3q^2 -  16 \vk^5 h_2 - 3 \vk^2 (h_1'')^2 \|_{L^2_{t,x}} &\lesssim \vk^4 \| \psi h_1 '' \|_{L^2_{t,x}} \| \psi h_1 \|_{L^\infty_{t,x}}
	+  \vk^4 \| \psi h_1' \|_{L^4_{t,x}}^2 \\
&\lesssim \vk^{\frac1{2}}\bigr\{ \delta + \|q\|_{\vLS} \bigr\} .
\end{align*}
Combining this with \eqref{E:L:g up}, it follows that
\begin{align*}
|E_{2b}| &\lesssim \vk^{-\frac23} \bigr\{ \delta^2 + \|q\|_{\vLS}^2 \bigr\} .
\end{align*}

In view of \eqref{hex again}, integration by parts allows one to reduce $E_{2c}$ to terms covered by Lemma~\ref{L:hhh}.  Thus
\begin{align*}
|E_{2c}| &\lesssim \vk^{-\frac16} \delta \bigl\{ \delta^2 + \| q \|_{\vLS}^2 \bigr\} .
\end{align*}

It remains only to consider $E_{3a}$ and $E_{3b}$.  Employing \eqref{E:h in Linfty}, Corollary~\ref{C:h in H^1}, and Lemmas~\ref{L:g up} and~\ref{L:h1h3}, we find
\begin{align*}
| E_{3a} | &\lesssim \vk^6 \bigl\| \psi^3 \bigl[\tfrac{1}{g} - 2\vk + \tfrac{4\vk^2 h_1}{ 1+2\vk h_1 }\bigr] \bigr\|_{L^2_{t,x}} \bigl\| \psi^9 h_3 \bigr\|_{L^2_{t,x}}
		+ \vk^8 \| \psi^{12} h_1 h_3 \|_{L^1_{t,x}} \\
	&\lesssim \vk^{-\frac13} \delta  \bigr\{ \delta + \|q\|_{\vLS} \bigr\}.
\end{align*}

To estimate $E_{3b}$, we first write $\Id -  4\vk^2 R_0(2\varkappa)=-\partial^2R_0(2\varkappa)$ and integrate by parts.  Next employing Lemma~\ref{L:w commutator} and the algebra property of $H^1$, we deduce that
\begin{equation*}
| E_{3b} | \lesssim  \vk^{-2} \|  \psi^2 q^3 \|_{L^1_t H^{-1}} \lesssim \vk^{-2} \delta  \|  \psi q \|_{L^2_t H^{1}}^2 .
\end{equation*}
But then by Plancherel and Lemma~\ref{L:change},
\begin{equation*}
| E_{3b} | \lesssim  \vk^{-\frac23} \delta \bigl\{  \| \psi q \|_{L^\infty_t H^{-1}}^2 + \| (\psi q)'' \|_{L^2_t H^{-1}_\vk}^2 \bigr\}
	\lesssim \vk^{-\frac23} \delta \bigl\{ \delta^2 + \| q \|_{\vLS}^2 \bigr\}. \qedhere
\end{equation*}
\end{proof}

We now demonstrate the coercivity of the quadratic current $j_1$.  It was the knowledge of precisely the nature of this coercivity that led to the definition of the local-smoothing norm back in \eqref{E:D:LS}.  For the purposes of this section, it is \eqref{2for2'} below that is important; however, we also isolate in \eqref{2for2} a part of the argument that will be useful in Section~\ref{S:dLS}.

\begin{lemma}[Quadratic current]\label{L:2for2}
Given $q:[-1,1]\to B_\delta\cap\Schw(\R)$, let
\begin{align*}
I(\vk):= \int_{-1}^1 \int_{-\infty}^\infty  \bigl\{64\vk^7h_2(\vk) - 12 \vk^2 q^2 + 5 (q')^2 - 5 (q^2)'' -  8\vk^4 [h_1''(\vk)]^2 \bigr\} \psi^{12} \,dx\,dt.
\end{align*}
Then
\begin{align}\label{2for2}
\| (\psi^6 q )''\|_{L^2_t H^{-1}_\vk([-1,1]\times\R)}^2 \lesssim  I(\vk) + \vk^{-\frac13} \bigr\{ \delta^2 + \|q\|_{\vLS}^2 \bigr\}
\end{align}
and analogously,
\begin{align}\label{2for2'}
\| (\psi^6 q )''\|_{L^2_t H^{-1}_\vk([-1,1]\times\R) }^2 \lesssim  - \int_{-1}^1 \int_{-\infty}^\infty   j_1 \psi^{12} \,dx\,dt  + \vk^{-\frac13} \bigr\{ \delta^2 + \|q\|_{\vLS}^2 \bigr\}.
\end{align}
\end{lemma}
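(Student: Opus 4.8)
The plan is to prove \eqref{2for2} by expressing the integrand
\[
\mathcal I:=64\vk^7h_2-12\vk^2q^2+5(q')^2-5(q^2)''-8\vk^4(h_1'')^2
\]
of $I(\vk)$ entirely in terms of $h_1$, and then reading off the coercive term $5\|(\psi^6q)''\|_{L^2_tH^{-1}_\vk}^2$ after a short sequence of integrations by parts against the Schwartz weight $\psi^{12}$; the bound \eqref{2for2'} will then follow from \eqref{2for2} by estimating $\int\!\!\int(-j_1-\mathcal I)\psi^{12}$. Throughout, the two algebraic consequences of \eqref{h1 nice} that do the work are $q=\vk h_1''-4\vk^3h_1$ and hence $q'=\vk h_1'''-4\vk^3h_1'$ (so that $q^2=\vk^2(h_1'')^2-8\vk^4h_1h_1''+16\vk^6h_1^2$, etc.).

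Substituting these identities and \eqref{E:h2 exp} (in the form $64\vk^7h_2=12\vk^2q^2-12\vk^4(h_1'')^2-80\vk^6(h_1')^2+80\vk^6\partial_x^2(h_1^2)+16\vk^6\partial_x^2R_0(2\vk)[(h_1')^2+2\partial_x^2(h_1^2)]$) into $\mathcal I$, the $\partial_x^2(h_1^2)$ and $(h_1')^2$ terms cancel in succession, leaving
\[
\mathcal I=5\vk^2(h_1''')^2-20\vk^4(h_1'')^2-40\vk^4h_1'''h_1'-5\vk^2\partial_x^2(h_1'')^2+40\vk^4\partial_x^2(h_1h_1'')+16\vk^6\partial_x^2R_0(2\vk)\bigl[(h_1')^2+2\partial_x^2(h_1^2)\bigr].
\]
Every term bearing an overall $\partial_x^2$ (or $\partial_x^2R_0(2\vk)$) is integrated by parts so that those derivatives — and, for the $R_0(2\vk)$ terms, the smoothing — land on $\psi^{12}$; using $\|\partial_x^sR_0(2\vk)\psi^{12}\|_{L^\infty}\lesssim\vk^{-2}$ for $0\le s\le4$ together with Proposition~\ref{P:h1}, their combined contribution is $O(\vk^{-2/3}\{\delta^2+\|q\|_{\vLS}^2\})$. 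What remains is $\int\!\!\int\psi^{12}\bigl[5\vk^2(h_1''')^2-20\vk^4(h_1'')^2-40\vk^4h_1'''h_1'\bigr]$; integrating $\int\psi^{12}h_1'''h_1'$ by parts once (the resulting $\vk^4\!\int(\psi^{12})'h_1'h_1''$ term being $O(\vk^{-1/3}\{\ldots\})$ by Proposition~\ref{P:h1}) converts this to $5\vk^2\!\int\!\!\int\psi^{12}[(h_1''')^2+4\vk^2(h_1'')^2]$, which up to a further $O(\vk^{-2}\{\ldots\})$ weight-derivative term equals $5\vk^2\|\psi^6h_1''\|_{L^2_tH^1_\vk}^2$.

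To finish \eqref{2for2} we identify $5\vk^2\|\psi^6h_1''\|_{L^2_tH^1_\vk}^2$ with $5\|(\psi^6q)''\|_{L^2_tH^{-1}_\vk}^2$. Since $\vk h_1''=-R_0(2\vk)q''$ by \eqref{h1 nice},
\[
\vk\,\psi^6h_1''=-R_0(2\vk)(\psi^6q)''-[\psi^6,R_0(2\vk)]q''+R_0(2\vk)\bigl[2(\psi^6)'q'+(\psi^6)''q\bigr];
\]
because $R_0(2\vk)$ is an isometry from $H^{-1}_\vk$ onto $H^1_\vk$, the first term contributes exactly $\|(\psi^6q)''\|_{H^{-1}_\vk}$. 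The terms $R_0(2\vk)(\psi^6)'q'$ and $R_0(2\vk)(\psi^6)''q$ are $O(\vk^{-1/3}\{\ldots\})$ in $L^2_tH^1_\vk$ by Lemma~\ref{L:change}; the commutator term reduces, via Lemma~\ref{L:basic_commutator} and $\vk h_1''=-R_0(2\vk)q''$, to $\vk$ times the $H^{-1}_\vk$ norm of a Schwartz multiple of $h_1'''$ and $h_1''$, which is $O(\vk^{-1}\{\ldots\})$ in $L^2_t$ by \eqref{E:h1 with s} and the first inequality in \eqref{E:h1 other}. Squaring, using $\|(\psi^6q)''\|_{L^2_tH^{-1}_\vk}\lesssim\|q\|_{\vLS}$, and collecting the above yields $\int\!\!\int\psi^{12}\mathcal I=5\|(\psi^6q)''\|_{L^2_tH^{-1}_\vk}^2+O(\vk^{-1/3}\{\delta^2+\|q\|_{\vLS}^2\})$, which gives \eqref{2for2} for every $\vk\ge1$ (irrespective of the sign of $I(\vk)$).

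Finally, the definition of $j_1$ and the identity $4\vk^2R_0(2\vk)-\Id=\partial_x^2R_0(2\vk)$ give
\[
-j_1-\mathcal I=\partial_x^2R_0(2\vk)\bigl[-5(q^2)''+5(q')^2\bigr]+8\vk^4\bigl[(h_1'')^2-h_1h_1^{(4)}\bigr].
\]
Integrating by parts so that all derivatives (and the $R_0(2\vk)$-smoothing) fall on $\psi^{12}$, the first term becomes a pairing of the Schwartz-localised, $O(\vk^{-2})$-in-$L^\infty$ kernels $\partial_x^sR_0(2\vk)\psi^{12}$ against $q^2$ and $(q')^2$, bounded using the $L^2_{t,x}$ estimates for $\phi q,\phi q'$ that come from Lemma~\ref{L:change} (exactly as $E_{3b}$ was handled in the proof of Lemma~\ref{LS:j3}); the second term, after two integrations by parts, equals $-2\!\int(\psi^{12})'h_1'h_1''-\!\int(\psi^{12})''h_1h_1''$ and is bounded by Proposition~\ref{P:h1}. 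Both pieces are $O(\vk^{-1/3}\{\delta^2+\|q\|_{\vLS}^2\})$, so \eqref{2for2'} follows from \eqref{2for2}. The main obstacle is the reorganisation of the second paragraph: it is the precise form of \eqref{E:h2 exp} and of $q=\vk h_1''-4\vk^3h_1$ that makes the quartic-in-$h_1$ clutter collapse to the single coercive block, and one must then check that every discarded term — especially those carrying $\partial_x^2R_0(2\vk)$ and those produced by commuting $\psi^6$ past $R_0(2\vk)$ — genuinely loses a power of $\vk$, which is where the weighted bounds of Proposition~\ref{P:h1} and the divisibility of the power $6$ in Definition~\ref{D:LS} are used.
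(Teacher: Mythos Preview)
Your proof is correct and proceeds by a genuinely different route from the paper's.  The paper keeps $q$ as the primary variable throughout: it invokes the double-commutator identity of Lemma~\ref{L:2xC} to replace $R_0\psi^{12}R_0$ by $\psi^6 R_0^2\psi^6$ (producing $4\vk^2\langle(\psi^6 q)'',R_0^2(\psi^6 q)''\rangle$ from the $(h_1'')^2$ term), and uses the symbol identity $\Id-16\vk^4R_0^2=-\partial(\Id+4\vk^2R_0)R_0\partial$ to extract $\langle(\psi^6 q)'',(\Id+4\vk^2R_0)R_0(\psi^6 q)''\rangle$ from the $(q')^2-16\vk^6(h_1')^2$ term; adding these yields the coercive form $5\langle(\psi^6 q)'',R_0(2\vk)(\psi^6 q)''\rangle$ directly.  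You instead substitute $q=\vk h_1''-4\vk^3h_1$ at the outset, so that after cancellation only the elementary block $5\vk^2\int\psi^{12}[(h_1''')^2+4\vk^2(h_1'')^2]$ survives, and then recover the $H^{-1}_\vk$ norm of $(\psi^6 q)''$ via the observation that $R_0(2\vk)$ is an isometry $H^{-1}_\vk\to H^1_\vk$ together with a single basic commutator.  Your approach is more elementary in that it bypasses Lemma~\ref{L:2xC} entirely; the paper's route makes the coercive quadratic form in $q$ appear without the change of variables and is perhaps more transparent about how the three contributions $-20\vk^4(h_1'')^2$, $(q')^2-16\vk^6(h_1')^2$ combine.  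For \eqref{2for2'}, the two arguments coincide.
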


\begin{proof}
The greater part of the work here is demonstrating \eqref{2for2}; this is were we focus our attention first.  At the end, we will see how to deduce \eqref{2for2'} from this.  
Throughout the proof we employ the abbreviations $h_1=h_1(\vk)$ and $R_0=R_0(2\vk)$.

Looking at \eqref{E:h2 exp}, we are lead naturally to estimate
\begin{align}
\vk^6 \biggl| \iint \bigl[\partial^2 R_0 \psi^{12}\bigr] \bigl[(h_1')^2 + 2 (h_1^2)'' \bigr] \,dx\,dt \biggr|
	&\lesssim  \vk^4 \| \psi^6 h_1' \|_{L^2_{t,x}}^2 + \vk^4  \| \psi^6 h_1 \|_{L^2_{t,x}}^2  \notag\\
&\lesssim \vk^{-\frac23} \bigr\{ \delta^2 + \|q\|_{\vLS}^2 \bigr\}, \label{2for2a}
\end{align}
by using that $|\partial^2R_0 \psi^{12}|+ |\partial^4R_0 \psi^{12}| \lesssim \vk^{-2} \psi^{12}$ and Proposition~\ref{P:h1}.

Looking at the remaining terms in \eqref{E:h2 exp} and incorporating them into $I(\vk)$, we find ourselves needing to consider
\begin{align}\label{tilde I}
 5 \iint  \bigl\{ - \partial_x^2\bigl[ q^2 - 16 \vk^6 h_1^2\bigr] - 4 \vk^4 [h_1'']^2 + \bigl[(q')^2 - 16 \vk^6(h_1')^2\bigr] \bigr\} \psi^{12} \,dx\,dt.
\end{align}
To proceed, we shall estimate the three terms inside the braces, working from left to right.

The first term makes a negligible contribution:  As $q=-\vk(-\partial_x^2+4\vk^2) h_1$, so
$$
q^2 - 16 \vk^6 h_1^2 = \vk^2 (h_1'')^2 - 4\vk^4 (h_1^2)'' + 8\vk^4 (h_1')^2 .
$$
Thus by Proposition~\ref{P:h1}, we see that
\begin{align}
\biggl| \iint  \bigl\{  \partial_x^2\bigl[ q^2 - 16 \vk^6 h_1^2\bigr]  \bigr\} \psi^{12} \,dx\,dt\biggr|
	&\lesssim \vk^2 \| \psi h_1'' \|_{L^2_{t,x}}^2 + \vk^4  \| \psi h_1 \|_{L^2_t H^1 }^2 \notag\\
&\lesssim \vk^{-\frac23} \bigr\{ \delta^2 + \|q\|_{\vLS}^2 \bigr\}. \label{2for2b}
\end{align}

Turning now to the middle term from \eqref{tilde I}, we see from Lemma~\ref{L:2xC} that
\begin{align*}
\bigl\| \tfrac1{\psi^2} \bigl[R_0 \psi^{12} R_0 - \psi^6 R_0^2 \psi^6\bigr]\tfrac1{\psi^2} \bigr\|_{H^{-1}_\vk \to H^1_\vk} \lesssim \vk^{-4}.
\end{align*}
Thus, recalling \eqref{h1 nice} and Lemma~\ref{L:change},
\begin{align*}
\biggl\| \int 4 \vk^4 [h_1'']^2 \psi^{12} \,dx - 4 \vk^2 \bigl\langle \psi^6 q'', R_0^2 \psi^6 q''\bigr\rangle\biggr\|_{L^1_t}
	&\lesssim \vk^{-2}  \| \psi^2 q '' \|_{L^2_t H^{-1}_\vk}^2 \\
&\lesssim \vk^{-2} \bigr\{ \delta^2 + \|q\|_{\vLS}^2 \bigr\}.
\end{align*}
Continuing from here, we note that by Lemma~\ref{L:change},
\begin{align*}
4\vk^2 \Bigl\| &\bigl\langle \psi^6 q'', R_0^2 \psi^6 q''\bigr\rangle - \bigl\langle (\psi^6 q)'', R_0^2 (\psi^6 q)''\bigr\rangle\Bigr\|_{L^1_t} \\
&\lesssim \Bigl( \| \psi q'' \|_{L^2_t H^{-1}_\vk} + \| \psi q' \|_{L^2_t H^{-1}_\vk} + \| \psi q \|_{L^2_t H^{-1}_\vk} \Bigr)
			\Bigl(\| \psi q' \|_{L^2_t H^{-1}_\vk} + \| \psi q \|_{L^2_t H^{-1}_\vk} \Bigr) \\[2mm]
&\lesssim \vk^{-\frac13} \bigr\{ \delta^2 + \|q\|_{\vLS}^2 \bigr\}.
\end{align*}
Putting these two pieces together, we find
\begin{align}\label{2for2c}
\int \biggl| \int 4 \vk^4 [h_1'']^2 \psi^{12} \,dx - 4 \vk^2 \bigl\langle (\psi^6 q)'', R_0^2 (\psi^6 q)''\bigr\rangle\biggr|\,dt
	&\lesssim \vk^{-\frac13} \bigr\{ \delta^2 + \|q\|_{\vLS}^2 \bigr\}.
\end{align}

Proceeding in a similar way, we find that
\begin{align*}
\biggl\| \int 16 \vk^6 [h_1']^2 \psi^{12} \,dx - 16 \vk^4 \bigl\langle \psi^6 q', R_0^2 \psi^6 q'\bigr\rangle\biggr\|_{L^1_t}
	&\lesssim \| \psi^2 q ' \|_{L^2_t H^{-1}_\vk}^2 \\
&\lesssim \vk^{-\frac13} \bigr\{ \delta^2 + \|q\|_{\vLS}^2 \bigr\}
\end{align*}
and also that
\begin{align*}
&\Bigl\| 	\bigl\langle (\psi^6 q')', \bigl[\Id + 4\vk^2 R_0\bigr]R_0 (\psi^6 q')'\bigr\rangle
		- \bigl\langle (\psi^6 q)'', \bigl[\Id + 4\vk^2 R_0\bigr]R_0 (\psi^6 q)''\bigr\rangle\Bigr\|_{L^1_t}  \\
&\lesssim \Bigl( \| \psi q'' \|_{L^2_t H^{-1}_\vk} + \| \psi q' \|_{L^2_t H^{-1}_\vk} + \| \psi q \|_{L^2_t H^{-1}_\vk} \Bigr)
		\Bigl(  \| \psi q' \|_{L^2_t H^{-1}_\vk} + \| \psi q \|_{L^2_t H^{-1}_\vk} \Bigr) \\[1mm]
&\lesssim \vk^{-\frac13} \bigr\{ \delta^2 + \|q\|_{\vLS}^2 \bigr\}.
\end{align*}
Hence, writing $\Id - 16\vk^4R_0^2 = - \partial\bigl[\Id + 4\vk^2 R_0\bigr]R_0\partial$, we find
\begin{equation}\label{2for2d}
\begin{aligned}
\Bigl\| \int \bigl[ (q')^2 - 16 \vk^6 (h_1')^2\bigr] \psi^{12} \,dx
		- \bigl\langle (\psi^6 q)'', \bigl[\Id + 4&\vk^2 R_0\bigr] R_0 (\psi^6 q)''\bigr\rangle\Bigr\|_{L^1_t} \\
&\lesssim \vk^{-\frac13} \bigr\{ \delta^2 + \|q\|_{\vLS}^2 \bigr\}.
\end{aligned}
\end{equation}

Aggregating our estimates \eqref{2for2a}, \eqref{2for2b}, \eqref{2for2c}, and \eqref{2for2d}, we discover that
\begin{align}
\biggl| I - 5 \int \bigl\langle (\psi^6 q)'', R_0 (\psi^6 q)''\bigr\rangle\,dt \biggr| \lesssim \vk^{-\frac13} \bigr\{ \delta^2 + \|q\|_{\vLS}^2 \bigr\},
\end{align}
which proves \eqref{2for2}.  We now turn our attention to \eqref{2for2'}

Comparing the definition of $I(\vk)$ with $\int - j_1 \psi^{12}$, we see two discrepancies. The first is easily estimated via integration by parts and Proposition~\ref{P:h1}:
\begin{align}
\biggl| 8\vk^4  \iint \psi^{12} \bigl[ h_1 h_1^{(4)} - (h_1'')^2 \bigr]\,dx\,dt \biggr|
	&\lesssim \vk^4  \bigl\{ \| \psi h_1' \|_{L^2_{t,x}} + \| \psi h_1 \|_{L^2_{t,x}} \bigr\} \| \psi h_1'' \|_{L^2_{t,x}} \notag\\
&\lesssim \vk^{-\frac13} \bigr\{ \delta^2 + \|q\|_{\vLS}^2 \bigr\}  . \label{discrep1}
\end{align}
Regarding the second discrepancy, we write $\Id - 4\vk^2R_0(2\vk)=-\partial_x^2R_0(2\vk)$ and then integrate by parts to estimate it as follows:
\begin{align*} 
\biggl|  \iint \bigl[\partial_x^2R_0(2\vk) \psi^{12}\bigr] \bigl[5 (q')^2 - 5 (q^2)''\bigr] \,dx\,dt \biggr| &\lesssim \vk^{-2} \| \psi q \|_{L^2_t H^1}^2
	\lesssim \vk^{-\frac23} \bigr\{ \delta^2 + \|q\|_{\vLS}^2 \bigr\}.
\end{align*}
Together with \eqref{discrep1}, this estimate allows us to deduce \eqref{2for2'} from \eqref{2for2}.
\end{proof}

\begin{lemma}[Cubic current]\label{L:cubic}
For $q:[-1,1]\to B_\delta\cap\Schw(\R)$ and $\delta$ sufficiently small,
\begin{align}\label{E:P:cubic}
\biggl| \int_{-1}^1 \int_{-\infty}^\infty  j_2 \psi^{12} \,dx\,dt \biggr| \lesssim \vk^{-\frac16} \delta \bigr\{ \delta^2 + \|q\|_{\vLS}^2 \bigr\}.
\end{align}
\end{lemma}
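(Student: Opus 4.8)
The key is that the coefficient $10$ in $j_2 = -64\vk^7 h_3 - 10 q^3$ is calibrated to annihilate the bulk of $-64\vk^7 h_3$. By \eqref{E:h defn}, writing $R_0 = R_0(\vk)$, we have $h_3(x) = -\langle \delta_x, R_0 q R_0 q R_0 q R_0\, \delta_x\rangle$, and commuting each copy of $q$ leftward past the resolvents using Lemma~\ref{L:basic_commutator} yields
\begin{equation*}
R_0 q R_0 q R_0 q R_0 = q^3 R_0^4 + \mathcal R,
\end{equation*}
where $\mathcal R$ is a finite sum of commutator terms. Since $\langle \delta_x, R_0(\vk)^4\, \delta_x\rangle = \tfrac1{2\pi}\int (\xi^2+\vk^2)^{-4}\,d\xi = \tfrac{5}{32\vk^7}$ is independent of $x$, we get $-64\vk^7 h_3 = 10\, q^3 + 64\vk^7\langle \delta_x, \mathcal R\, \delta_x\rangle$; hence $j_2 = 64\vk^7 \langle \delta_x, \mathcal R\, \delta_x\rangle$ and the troublesome monomial $q^3$ has been removed. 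The plan is to run this reduction with the localizing weight $\psi^{12}$ in place --- which forces one to commute the weights inward and, to keep the cancellation visible, to perform the innermost commutations symmetrically via Lemma~\ref{L:2xC} --- and then to estimate $\iint \psi^{12}\, \mathcal R(x,x)\,dx\,dt$.

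Each term of $\mathcal R$ has the shape $\mathcal O_0\, q^{(a_1)} \mathcal O_1\, q^{(a_2)} \mathcal O_2\, q^{(a_3)} \mathcal O_3$, where the $\mathcal O_i$ are non-commuting monomials in $R_0(\vk)$ and $\partial$ involving in total \emph{five} copies of $R_0(\vk)$ --- one more than in $h_3$ itself, since each commutator replaces one resolvent by two --- and at most one stray $\partial$, which we keep adjacent to a resolvent by using the third form in Lemma~\ref{L:basic_commutator}, while exactly one of $a_1,a_2,a_3$ lies in $\{1,2\}$ and the rest vanish. I would distribute $\psi^{12} = \psi^4\cdot\psi^4\cdot\psi^4$ next to the three copies of $q$ and commute the appropriate subweights into the $\mathcal O_i$ with Lemma~\ref{L:w commutator}: the conjugated operators are then bounded $H^{-1}_\vk\to H^1_\vk$, and, crucially, because five resolvents must be spread over four operator slots, at least one slot carries a spare resolvent and so contributes a power $\vk^{-2}$ (or $\vk^{-1}$ more, when the stray $\partial$ attaches there). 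Decomposing each $\psi^4 q^{(a_i)}$ into Littlewood--Paley pieces, applying H\"older in the Schatten classes with the two highest-frequency pieces placed in $\I_2$ and the remaining one in $\I_\infty$, and invoking \eqref{est HS'}, \eqref{est op'} together with Lemma~\ref{L:change} (which costs no more than $\vk^{2/3}$ for a twice-differentiated copy of $q$), one is reduced to exactly the frequency sum computed in the proof of Proposition~\ref{P:X_l thingy} for $\ell=3$, which contributes $\vk^{-4/3-9/2}\delta\{\delta^2+\|q\|_{\vLS}^2\}$.

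Tallying the powers of $\vk$ from the frequency sum ($\vk^{-4/3-9/2}$), the spare resolvent ($\vk^{-2}$) and the differentiated factor ($\vk^{2/3}$ at worst), each term of $\mathcal R$ contributes $\vk^{-43/6}\delta\{\delta^2+\|q\|_{\vLS}^2\}$ to $\iint\psi^{12}\mathcal R(x,x)\,dx\,dt$; multiplying by the prefactor $64\vk^7 = 64\vk^{42/6}$ gives precisely $\vk^{-1/6}\delta\{\delta^2+\|q\|_{\vLS}^2\}$, as claimed. (Equivalently, one may first rewrite $\mathcal R(x,x)$ as a sum of triple products of $h_1$ and its derivatives, with operators from $\{\Id,\partial^2 R_0(2\vk),\vk^2 R_0(2\vk)\}$ interposed, and appeal directly to Lemma~\ref{L:hhh}, whose exponent $\vk^{-7-1/6}$ is exactly what is needed after multiplication by $\vk^7$.)

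I expect the main obstacle to be that this bound is sharp, with essentially no slack: the $\vk^{2/3}$ lost when a commutator deposits two derivatives on a copy of $q$ is recouped only from the single spare resolvent, together with the barely sufficient decay of the $\ell=3$ frequency sum. One must therefore extract every available factor of $\vk^{-1}$ --- in particular assigning the $\I_2$ (not $\I_\infty$) norms to the genuinely highest-frequency factors, tracking the stray $\partial$ carefully through the weight commutations so that it is always paired with at least one resolvent, and verifying that the symmetric bookkeeping in the reduction step (Lemma~\ref{L:2xC}) never leaves behind an uncancelled monomial carrying a larger power of $\vk$ than $q^3 R_0^4$ does.
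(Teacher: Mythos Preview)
Your cancellation observation is correct: the constant $\tfrac{5}{32\vk^7}$ arising from the diagonal of $R_0^4$ is exactly what makes the $10q^3$ disappear. The gap lies in your description of $\mathcal R$ and the ensuing estimate. When you commute all three copies of $q$ leftward, you obtain
\[
\mathcal R = [R_0,q]\,R_0 q R_0 q R_0 \;+\; q\,[R_0^2,q]\,R_0 q R_0 \;+\; q^2\,[R_0^3,q]\,R_0.
\]
In the last summand, two copies of $q$ sit adjacent with no intervening resolvent. After taking the trace against $\psi^{12}$ and cycling, a typical piece looks like
\[
\Tr\bigl\{R_0^{5-a}\,\psi^{12} q\cdot q\cdot R_0^{a}(2q'\partial+q'')\bigr\},
\]
so in your notation $\mathcal O_1=\Id$. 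This operator is \emph{not} bounded $H^{-1}_\vk\to H^1_\vk$, and hence Proposition~\ref{P:X_l thingy} does not apply; the Littlewood--Paley/Schatten argument you invoke needs a resolvent between every pair of $q$'s in order to place each localized factor in $\I_2$ or $\I_\infty$. Attempting to treat $\psi^{12}q^2$ as a single block also fails at regularity $H^{-1}$, since $q^2$ need not even be a distribution there without already using the local smoothing norm, and doing so changes your power count. Your parenthetical appeal to Lemma~\ref{L:hhh} does not rescue this: the term $q^2[R_0^3,q]R_0$ carries a bare factor of $q(x)^2$ on the diagonal, which is not of the form $h_1\,A[h_1'h_1']$ or $h_1'\,A[h_1'h_1]$.

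The paper circumvents this by never commuting the $q$'s at all. After using Lemma~\ref{L:2xC} to symmetrize the localizing weight and reduce to $u=\psi^4 q$, it writes the trace $\Tr\{R_0 u R_0 u R_0 u R_0\}$ as a paraproduct with an explicit symbol $m(\eta_1,\eta_2)$, evaluates $m$ by residues, and proves the pointwise inequality
\[
\bigl|64\vk^7 m(\eta_1,\eta_2)-10\bigr|\;\lesssim\;\frac{\eta_1^2\eta_2^2}{(\eta_1^2+4\vk^2)(\eta_2^2+4\vk^2)}
\;+\;\frac{\vk^4(\eta_1^2+\eta_2^2+(\eta_1-\eta_2)^2)}{(\eta_1^2+4\vk^2)(\eta_2^2+4\vk^2)((\eta_1-\eta_2)^2+4\vk^2)}.
\]
The two pieces on the right are then handled by Cauchy--Schwarz and a Schur test in Fourier variables, which is where the $\vk^{-1/6}$ ultimately emerges. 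This Fourier-side approach keeps all three frequencies on an equal footing and never creates an ``$\mathcal O_i=\Id$'' slot.
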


\begin{proof}
Our first two reductions are based on Lemma~\ref{L:2xC} and the representation
\begin{align}\label{3red0}
\iint 64\vk^7 h_3 \psi^{12} \,dx\,dt= - 64\vk^7 \int \Tr\Bigl\{ R_0 \psi^{12} R_0 q R_0 q R_0 q \Bigr\} \,dt .
\end{align}
Here and below $R_0=R_0(\vk)$.

Employing \eqref{R0 vs psi sym}, we may write 
\begin{align*}
\Tr\Bigl\{ \bigl[R_0 \psi^{12} R_0 - \psi^6 R_0^2 \psi^6\bigr] q R_0 q R_0 q \Bigr\}
	= \vk^{-2} \sum \Tr\Bigl\{ A_i' \psi^3 q \bigl[\psi^3 R_0 \tfrac1{\psi^3} \bigr] \psi^3 q R_0 q\psi^6 A_i^{\ }  \Bigr\},
\end{align*}
where the sum has finitely many terms and each of the operators $A_i$ and $A_i'$ satisfy \eqref{A:2xC}.
This allows us to apply Proposition~\ref{P:X_l thingy} and so deduce that
\begin{align}
\biggl| \vk^7  \!\! \int \! \Tr\Bigl\{ \bigl[R_0 \psi^{12} R_0 - \psi^6 R_0^2 \psi^6\bigr] q R_0 q R_0 q \Bigr\} \,dt\biggr|
	\lesssim \vk^{-\frac{5}{6}} \delta \bigr\{ \delta^2 + \|q\|_{\vLS}^2 \bigr\}, \label{3red1}
\end{align}
which constitutes an acceptable error.

Next we seek to apply \eqref{psi vs q vs R0} in a similar way to prove
\begin{align}
\biggl| \vk^7  \!\! \int \! \Tr\Bigl\{ R_0 q\psi^4 \Bigl( \psi^2 R_0 q R_0 \psi^2 - R_0 \psi^4 q R_0 \Bigr) q\psi^4 R_0 \Bigr\} \,dt\biggr|
	\lesssim \vk^{-\frac12} \delta \bigr\{ \delta^2 + \|q\|_{\vLS}^2 \bigr\}.
\label{3red2}
\end{align}
The terms involving $A_i$ are readily seen to be acceptable via Proposition~\ref{P:X_l thingy}.  However this leaves us to prove
\begin{equation*}
\biggl| \vk^7  \! \int \Tr\Bigl\{R_0 q\psi^4 \Bigl( \vk^2 R_0^2 [\psi^3\psi' q]' R_0^2 + \partial R_0^2 [\psi^3\psi' q]' \partial R_0^2 \Bigr) q\psi^4 R_0  \Bigr\} \,dt\biggr|
\lesssim \text{RHS\eqref{3red2}}.
\end{equation*}
To do this, we cycle the trace and apply H\"older's inequality, using the following two inputs:  First, by \eqref{E:X_l thingy}, we have
\begin{align*}
\bigl\| R_0 q\psi^4  R_0^2 \psi^4 q \sqrt{R_0} \bigr\|_{L^2_t \mathfrak{I}_2}^2
	&= \bigl\| R_0 q\psi^4  R_0^2 \psi^4 q R_0 q\psi^4  R_0^2 \psi^4 q R_0 \bigr\|_{L^1_t \mathfrak{I}_1} \\
&\lesssim \vk^{-11-\frac{1}{3}} \delta^2 \bigr\{ \delta^2 + \|q\|_{\vLS}^2 \bigr\}.
\end{align*}
Second, using Lemma~\ref{L:HS} and \eqref{E:L:change'}, we see that
\begin{align*}
\bigl\| \vk^2 R_0^{3/2} [\phi q]' R_0 + \partial R_0^{3/2} [\phi q]' \partial R_0   \bigr\|_{L^2_t \mathfrak{I}_2}^2
	&\lesssim  \vk^{-3} \bigl\| [\phi q]' \bigr\|_{L^2_t H^{-1}_\vk}^2 \\
&\lesssim \vk^{-\frac{11}{3}} \bigr\{ \delta^2 + \|q\|_{\vLS}^2 \bigr\}
\end{align*}
for any Schwartz function $\phi$.

Writing $u:=\psi^4 q$ and combining \eqref{3red0}, \eqref{3red1}, and \eqref{3red2}, we finally achieve our sought-after reduction: To prove the lemma, it suffices to show that
\begin{align}\label{3red4}
\int\biggl| 64\vk^7\Tr\Bigl\{  R_0 u R_0 u R_0 u R_0\Bigr\} - \int 10 u^3\,dx\biggr| \,dt
 	\lesssim  \vk^{-\frac16} \delta \bigr\{ \delta^2 + \|q\|_{\vLS}^2 \bigr\}.
\end{align}

In order to exhibit the required cancellation, it is convenient to freeze the time variable and show instead that
\begin{align}\label{3red5}
\biggr| 64\vk^7 \Tr\Bigl\{  R_0 u R_0 u R_0 u R_0\Bigr\}  - \int 10 u^3\,dx \biggr|
		\lesssim  \vk^{-\frac16} \delta \Bigl\{ \delta^2 + \| u'' \|_{H^{-1}_\vk}^2 \Bigr\}.
\end{align}
This then yields \eqref{3red4} by integrating in time and applying Lemma~\ref{L:change}.  We begin by writing out the trace as a paraproduct with an explicit symbol.  Concretely, if
\begin{align*}
m(\eta_1,\eta_2) = \int_\R \frac{d\xi}{2\pi[\xi^2+\vk^2]^2[(\xi+\eta_1)^2+\vk^2][(\xi+\eta_2)^2+\vk^2]},
\end{align*}
then using that $u$ is real-valued, we may evaluate the trace (in Fourier variables) as follows:
$$
\Tr\bigl\{  R_0 u R_0 u R_0 u R_0\bigr\} = \tfrac{1}{\sqrt{2\pi}}\iint m(\eta_1,\eta_2) \overline{\hat u(\eta_1+\eta_2)} \hat u(\eta_2) \hat u(\eta_1)\,d\eta_1\,d\eta_2.
$$
By comparison, 
$$
\int u^3\,dx  = \tfrac{1}{\sqrt{2\pi}}\iint \overline{\hat u(\eta_1+\eta_2)} \hat u(\eta_2) \hat u(\eta_1)\,d\eta_1\,d\eta_2 .
$$

By contour integration (or partial fractions), we find
\begin{align*}
m= \tfrac{640 \vk^6+48\vk^4 [2 \eta_1^2 + 2 \eta_2^2 + (\eta_1-\eta_2)^2]+4\vk^2[3 \eta_1^4 - 4 \eta_1^3 \eta_2 + 4 \eta_1^2 \eta_2^2 - 4 \eta_1 \eta_2^3 + 3 \eta_2^4]+\eta_1^2 \eta_2^2 (\eta_1-\eta_2)^2 }{4\vk^3[\eta_1^2+4\vk^2]^2 [\eta_2^2+4\vk^2]^2 [(\eta_1-\eta_2)^2+4\vk^2]}.
\end{align*}
With patient computation (we recommend collecting terms by power of $\vk$), this yields the key cancellation:
\begin{align*}
\bigl| 64\vk^7 m - 10\bigr| \lesssim  \frac{\eta_1^2\eta_2^2}{[\eta_1^2+4\vk^2] [\eta_2^2+4\vk^2]} + \frac{\vk^4[\eta_1^2+\eta_2^2+(\eta_1-\eta_2)^2]}{[\eta_1^2+4\vk^2] [\eta_2^2+4\vk^2] [(\eta_1-\eta_2)^2+4\vk^2]} 
\end{align*}
uniformly for $\eta_1,\eta_2\in\R$.

To estimate the contribution of the first term, we use Cauchy--Schwarz:
\begin{align*}
& \iint \frac{\eta_1^2\eta_2^2 |\hat u(\eta_1+\eta_2) \hat u(\eta_2) \hat u(\eta_1)|}{[\eta_1^2+4\vk^2] [\eta_2^2+4\vk^2]} \,d\eta_1\,d\eta_2 \\
&\lesssim \| u'' \|_{H^{-1}_\vk}^2 \biggl(\iint \frac{|\hat u(\eta_1+\eta_2)|^2}{[\eta_1^2+4\vk^2] [\eta_2^2+4\vk^2]}\,d\eta_1\,d\eta_2\biggr)^{1/2} 
\lesssim \vk^{-1/2} \| u'' \|_{H^{-1}_\vk}^2 \| u \|_{H^{-1}_\vk} .
\end{align*}

Exploiting the $\eta_1\leftrightarrow\eta_2$ symmetry, the contribution of the second term is controlled by
\begin{align}\label{3red6}
\iint_{|\eta_2|\leq|\eta_1|} \frac{ \eta_1^2 |\hat u(\eta_1+\eta_2) \hat u(\eta_2) \hat u(\eta_1)|\,d\eta_1\,d\eta_2 }{[\eta_1^2+4\vk^2]}.
\end{align}
We split this integral into two parts depending on whether $|\eta_1+\eta_2| > |\eta_2|$ or conversely, $|\eta_1+\eta_2| < |\eta_2|$.  In the former case, our next inequality is elementary; in the latter, one must first make the change of variables $\zeta_1=\eta_1$, $\zeta_2=-\eta_1-\eta_2$ to obtain
\begin{align*}
\eqref{3red6}&\lesssim \iint \frac{ \eta_1^2 |\hat u(\eta_1)|}{\sqrt{\eta_1^2+4\vk^2}}
		\frac{ \sqrt{(\eta_1+\eta_2)^2 + 1}\,|\hat u(\eta_1+\eta_2)|}{\sqrt{(\eta_1+\eta_2)^2+4\vk^2}}
		\frac{ |\hat u(\eta_2)|}{\sqrt{\eta_2^2+1}} \,d\eta_1\,d\eta_2 .
\end{align*}
Although used to arrive at this final form of the integrand, we have now abandoned the constraints on $\eta_1,\eta_2$.  To complete our estimation of \eqref{3red6}, we now seek to employ Schur's test. Setting  $N=\vk^{1/3}\in[1,\vk]$, Cauchy--Schwarz shows
\begin{align*}
\sup_\zeta \int &\frac{ \sqrt{(\eta+\zeta)^2 + 1}\,|\hat u(\eta+\zeta)|}{\sqrt{(\eta+\zeta)^2+4\vk^2}} \,d\eta \\
&= \int_{|\xi|\leq N} \frac{ \xi^2 + 1}{\sqrt{\xi^2+4\vk^2}} \frac{ |\hat u(\xi)|}{\sqrt{\xi^2 + 1}} \,d\xi
			+ \int_{|\xi|\geq N} \frac{ \sqrt{\xi^2 + 1}}{\xi^2} \frac{ \xi^2\,|\hat u(\xi)|}{\sqrt{\xi^2+4\vk^2}} \,d\xi \\[2mm]
&\lesssim \vk^{-1/6} \Bigl\{ \| u \|_{H^{-1}} + \| u'' \|_{H^{-1}_\vk} \Bigr\}.
\end{align*}
Employing this bound in Schur's test, we deduce that \eqref{3red6}${}\lesssim{}$RHS\eqref{3red5}.  This completes the proof of \eqref{3red5} and with that, of the lemma.
\end{proof}

With these preliminaries complete, we are now ready to prove the local smoothing estimate.

\begin{proof}[Proof of Proposition~\ref{P:LS5}]
As noted at the beginning of this section, it suffices to prove the result for $\vk$ large (relative to some absolute constant).

Looking back to the basic identity \eqref{E:int micro} for the localized conservation law and employing Lemmas~\ref{LS:j3},~\ref{L:2for2}, and~\ref{L:cubic}, we find that
\begin{align}\label{303}
\| (\psi^6 q )''\|_{L^2_t H^{-1}_\vk([-1,1]\times\R) }^2 \lesssim \|\rho\|_{L^\infty_t L^1_x}  + \vk^{-\frac16} \bigr\{ \delta^2 + \|q\|_{\vLS}^2 \bigr\}.
\end{align}
On the other hand, $\rho\geq0$, so by \eqref{E:rho alpha} and Proposition~\ref{P:H-1 bound}, we deduce
\begin{align}\label{304}
\| (\psi^6 q )''\|_{L^2_t H^{-1}_\vk([-1,1]\times\R) }^2 \lesssim \|q(0)\|_{H^{-1}_\vk}^2  + \vk^{-\frac16} \bigr\{ \delta^2 + \|q\|_{\vLS}^2 \bigr\}.
\end{align}
The final estimate \eqref{E:P:LS5} now follows by taking a supremum over all (spatial) translates of the solution $q$ and choosing $\vk$ sufficiently large.
\end{proof}

\begin{corollary}\label{C:LS5}
Fix $\delta>0$ sufficiently small and $\phi\in\Schw(\R)$. For every initial data $q\in B_\delta\cap \Schw(\R)$, the corresponding solution $q(t)$ to \eqref{5th} satisfies
\begin{equation}\label{E:C:LS5}
\sup_{x_0,t_0\in\R}\ \int_{t_0-1}^{t_0+1}\int_{-\infty}^\infty  \phi(x-x_0)^2 \bigl[ |q'(t,x)|^2 + |q(t,x)|^2 \bigr]\,dx\,dt \lesssim_\phi \delta^2 .
\end{equation}
Moreover, given any $Q\subset B_\delta\cap \Schw(\R)$ that is $H^{-1}(\R)$-equicontinuous,
\begin{equation}\label{E:C:LS5'}
\lim_{R\to\infty} \ \sup_{q\in Q}\ \sup_{x_0,t_0\in\R}\ \int_{t_0-1}^{t_0+1}\int_{-\infty}^\infty  \tfrac1R \phi\bigl(\tfrac{x-x_0}R\bigr)^2 \bigl[ |q'(t,x)|^2 + |q(t,x)|^2 \bigr]\,dx\,dt  = 0.
\end{equation}
\end{corollary}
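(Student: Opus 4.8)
My plan is to extract both assertions from Proposition~\ref{P:LS5}, the \emph{a priori} estimates of Proposition~\ref{P:H-1 bound}, and the norm conversions of Lemma~\ref{L:change}. Since \eqref{5th} is autonomous and (by Proposition~\ref{P:H-1 bound}) the flow maps an $H^{-1}$-equicontinuous subset of $B_\delta\cap\Schw(\R)$ into an $H^{-1}$-equicontinuous subset of some $B_{\delta'}$ with $\delta'\ll1$, it suffices to take $t_0=0$ in both \eqref{E:C:LS5} and \eqref{E:C:LS5'}; translation in $x$ is harmless because $\vLS$ already supremizes over translates and $\|\cdot\|_{L^\infty_t H^{-1}}$ is translation-invariant. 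For \eqref{E:C:LS5} I would take $\vk=1$: Proposition~\ref{P:LS5} gives $\|q\|_{\vLS}\lesssim\delta$ (at $\vk=1$) and Proposition~\ref{P:H-1 bound} gives $\|q\|_{L^\infty_t H^{-1}}\lesssim\delta$, so \eqref{E:L:change} (with $\phi=\psi_z^3$, uniformly in $z$) bounds $\|(\psi_z^3 q)''\|_{L^2_t H^{-1}_1}\lesssim\delta$; combining with $\|\psi_z^3 q\|_{L^2_t H^{-1}_1}\lesssim\delta$, the elementary bounds $\|g\|_{L^2}+\|g'\|_{L^2}\lesssim\|g''\|_{H^{-1}_1}+\|g\|_{H^{-1}_1}$, and $|(\psi_z^3)'|\lesssim\psi_z^3$, yields $\|\psi_z^3 q'\|_{L^2_{t,x}}+\|\psi_z^3 q\|_{L^2_{t,x}}\lesssim\delta$ uniformly in $z$. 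Writing $\phi^2=\tfrac{7}{512}\int\phi^2\psi_z^{12}\,dz$, using $\phi^2\psi_z^{12}|q^{(s)}|^2\le\|\phi^2\psi_z^6\|_{L^\infty}|\psi_z^3 q^{(s)}|^2$, and noting $\int\|\phi^2\psi_z^6\|_{L^\infty}\,dz\lesssim_\phi1$ for Schwartz $\phi$, integration in $z$ then gives \eqref{E:C:LS5}.

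For \eqref{E:C:LS5'} put $\phi_R(x):=R^{-1/2}\phi(x/R)$ (so $\|\phi_R\|_{L^\infty}\le R^{-1/2}\|\phi\|_{L^\infty}$) and split $q=q_{\le\vk}+q_{>\vk}$ via Littlewood--Paley, $q_{\le\vk}:=\sum_{N\le\vk}q_N$, with $\vk\ge1$ a large parameter to be chosen last. The first observation is that equicontinuity \eqref{E:equi defn} together with $Q\subset B_\delta$ forces $\sup_{q\in Q}\|q\|_{H^{-1}_\vk}^2\to0$ as $\vk\to\infty$ (split the Fourier integral at $|\xi|=N$, bound the piece $|\xi|\le N$ by $(N^2+4)\vk^{-2}\|q\|_{H^{-1}_1}^2\lesssim N^2\delta^2\vk^{-2}$ and the piece $|\xi|>N$ by $\sup_{q\in Q}\int_{|\xi|>N}|\hat q|^2/(\xi^2+4)\to0$, and choose $N$ then $\vk$); hence, by Proposition~\ref{P:H-1 bound} and then Proposition~\ref{P:LS5}, $\sup_{q\in Q}\|q\|_{\vLS}^2\to0$ as $\vk\to\infty$. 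Using $|a+b|^2\le2|a|^2+2|b|^2$ I split the window into a low and a high piece. For the \emph{low} piece, Bernstein and Proposition~\ref{P:H-1 bound} give $\|q_{\le\vk}(t)\|_{L^2_x}\lesssim\vk\delta$ and $\|q_{\le\vk}'(t)\|_{L^2_x}\lesssim\vk^2\delta$ uniformly in $t$, so this piece is at most $\|\phi_R\|_{L^\infty}^2\int_{-1}^1\bigl(\|q_{\le\vk}'(t)\|_{L^2_x}^2+\|q_{\le\vk}(t)\|_{L^2_x}^2\bigr)\,dt\lesssim R^{-1}\vk^4\delta^2$, which for fixed $\vk$ tends to $0$ as $R\to\infty$, uniformly over $Q$. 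For the \emph{high} piece I instead use the weight redistribution $\phi_R^2=\tfrac{7}{512}\int\phi_R^2\psi_z^{12}\,dz$; the key point is that $\int\|\phi_R^2\psi_z^6\|_{L^\infty}\,dz\lesssim_\phi1$ \emph{uniformly in $R\ge1$}, because $\phi$ Schwartz gives $\phi_R^2(x)\lesssim_\phi R/(R^2+x^2)$ and hence $\|\phi_R^2\psi_z^6\|_{L^\infty}\lesssim_\phi R/(R^2+z^2)$. This reduces the high piece to the fixed-weight bound $\sup_z\bigl(\|\psi_z^3 q_{>\vk}'\|_{L^2_{t,x}}+\|\psi_z^3 q_{>\vk}\|_{L^2_{t,x}}\bigr)\lesssim\|q\|_{\vLS}+\|q\|_{L^\infty_t H^{-1}_\vk}+(\text{errors decaying faster than any power of }\vk)$.

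To prove this last inequality I would exploit that $\psi^3=\sech^3(\,\cdot\,/99)$ has an \emph{exponentially} decaying Fourier transform, so the multiplier $\psi_z^3$ (and its derivatives) essentially commutes with a frequency cut-off at scale $\vk$, up to errors whose $L^2$ norm is rapidly decreasing in $\vk$. Consequently, modulo such errors, $\psi_z^3 q_{>\vk}$ has frequencies $\gtrsim\vk$, so $\psi_z^3 q_{>\vk}'$ agrees to leading order with $\partial(\psi_z^3 q_{>\vk})$ and $\psi_z^3 q_{>\vk}=O(\vk^{-1})\,\partial(\psi_z^3 q_{>\vk})$; writing $\psi_z^3 q_{>\vk}'=(\psi_z^3 q')_{>\vk}+[\psi_z^3,P_{>\vk}]q'$ and $(\psi_z^3 q')_{>\vk}=\partial\bigl((\psi_z^3 q)_{>\vk}\bigr)-((\psi_z^3)'q)_{>\vk}$, one uses (i) $\|\partial((\psi_z^3 q)_{>\vk})\|_{L^2}\lesssim\|(\psi_z^3 q)''\|_{H^{-1}_\vk}$ and $\|((\psi_z^3)'q)_{>\vk}\|_{L^2}\lesssim\vk^{-1}\|((\psi_z^3)'q)''\|_{H^{-1}_\vk}$ (both from $\xi^2\lesssim\xi^4/(\xi^2+4\vk^2)$ when $|\xi|\gtrsim\vk$), and (ii) that $[\psi_z^3,P_{>\vk}]q'$, after extracting the leading symbol $\sim\vk^{-1}(\psi_z^3)'\chi'(D/\vk)$, only sees the frequencies of $q$ comparable to $\vk$ localized by a Schwartz weight, which are controlled by $\vk^{-1}\|(wq)''\|_{H^{-1}_\vk}$ (via the generous frequency-band estimate $\|(wq)_{\sim\vk}\|_{L^2}\lesssim\vk^{-1}\|(wq)''\|_{H^{-1}_\vk}$ combined with the exponentially-small commutation of $w$ with the projection) — precisely one inverse power of $\vk$, just enough to absorb the stray derivative. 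Invoking $\|(wq)''\|_{L^2_t H^{-1}_\vk}\lesssim_w\|q\|_{\vLS}+\|q\|_{L^\infty_t H^{-1}_\vk}$ from Lemma~\ref{L:change}, the high piece is then $\lesssim_\phi\bigl(\|q\|_{\vLS}+\sup_{q\in Q}\|q(0)\|_{H^{-1}_\vk}\bigr)^2+(\text{rapidly decaying in }\vk)$, which tends to $0$ as $\vk\to\infty$ uniformly over $R\ge1$ and $q\in Q$. Given $\eta>0$, I fix $\vk$ so the high piece is $<\eta/2$ for all $R\ge1$ and $q\in Q$, then fix $R_0$ so the low piece is $<\eta/2$ for all $R\ge R_0$ and $q\in Q$; this proves \eqref{E:C:LS5'}.

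The step I expect to be the genuine obstacle is item (ii): showing that after spatial localization and cutting off at frequency $\vk$, the part of $q$ at frequencies comparable to $\vk$ is controlled by $\|q\|_{\vLS}$ (together with $\|q\|_{L^\infty_t H^{-1}_\vk}$) with exactly one extra inverse power of $\vk$ — no loss, no gain — so that neither the derivative in $q'$ nor the passage from the global estimate to the windowed one degrades the bound. Everything there hinges on the fact that the frequency cut-off at scale $\vk$ commutes with the fixed Schwartz cut-off $\psi_z^3$ up to errors decaying faster than any power of $\vk$, together with the careful bookkeeping of which frequency band sits next to which cut-off.
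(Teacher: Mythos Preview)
Your argument for \eqref{E:C:LS5} is correct and matches the paper's: the paper packages it as
\[
\iint\phi^2\bigl[|q'|^2+|q|^2\bigr]\,dx\,dt \lesssim \|\phi q\|_{L^2_tH^1}^2+\|\phi' q\|_{L^2_tH^1}^2
\qtq{and}
\|\phi q\|_{L^2_tH^1}^2\lesssim\|\phi q\|_{L^\infty_tH^{-1}}^2+\|(\phi q)''\|_{L^2_tH^{-1}}^2,
\]
and then invokes Lemma~\ref{L:change}, Proposition~\ref{P:LS5}, and \eqref{E:P:H-1} at $\vk=1$, which is the same content as your partition-of-unity computation.

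For \eqref{E:C:LS5'} you take a genuinely different route, and the commutator step you flag as the obstacle is exactly what the paper's approach avoids. Rather than splitting $q=q_{\le\vk}+q_{>\vk}$ and then controlling $[\psi_z^3,P_{>\vk}]q'$, the paper splits the \emph{norm} of the already-localized function $\phi_R q$: by Plancherel,
\[
\|\phi_R q\|_{H^1}^2\lesssim\vk^4\|\phi_R q\|_{H^{-1}}^2+\|(\phi_R q)''\|_{H^{-1}_\vk}^2,
\]
which separates low and high frequencies of $\phi_R q$ (not of $q$) without introducing any Littlewood--Paley projections. The first term is $\lesssim\vk^4R^{-1}\|q\|_{H^{-1}}^2$ via \eqref{elem mult} and $\|\phi_R\|_{L^\infty}+\|\phi_R'\|_{L^\infty}\lesssim R^{-1/2}$. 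For the second, the paper uses the identity $\|f\|_{H^{-1}_\vk}^2=\tfrac{512}{7}\int\langle\psi_z^6 f,[\psi_z^{-6}R_0(2\vk)\psi_z^6]\psi_z^6 f\rangle\,dz$ together with Lemma~\ref{L:w commutator} and the uniform bound $\int\|\phi_R\psi_z^3\|_{H^2}^2\,dz\lesssim 1$ to reduce matters to $\sup_z\{\|(\psi_z^3q)''\|_{H^{-1}_\vk}^2+\|\psi_z^3q'\|_{H^{-1}_\vk}^2+\|\psi_z^3q\|_{H^{-1}_\vk}^2\}$, which Lemma~\ref{L:change} controls by $\|q\|_{\vLS}^2+\|q\|_{L^\infty_tH^{-1}_\vk}^2+\vk^{-2/3}\delta^2$. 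Applying Propositions~\ref{P:H-1 bound} and~\ref{P:LS5} then gives $\|\phi_R q\|_{L^2_tH^1}^2\lesssim\vk^4R^{-1}\delta^2+\|q(0)\|_{H^{-1}_\vk}^2+\vk^{-1/6}\delta^2$, and one concludes by choosing $\vk$ large (via equicontinuity of $Q$) and then $R$ large.

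Your approach should also go through, but the iterated commutator bookkeeping you anticipate in item~(ii) is genuinely delicate: the leading term of $[\psi_z^3,P_{>\vk}]q'$ is $\sim\vk^{-1}(\psi_z^3)'P_{\sim\vk}q'$, and the crude estimate $\vk^{-1}\|(\psi_z^3)'\|_{L^\infty}\|(q')_{\sim\vk}\|_{L^2}\lesssim\vk\|q\|_{H^{-1}_\vk}$ does \emph{not} vanish as $\vk\to\infty$ under equicontinuity alone, so one really must carry out the further localization you sketch (commuting once more to reach $\|(wq)''\|_{H^{-1}_\vk}$) and verify that all remainder terms in the symbol expansion are harmless when applied to $q'\in H^{-2}$. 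The paper's Plancherel splitting bypasses this entirely.
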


\begin{proof}
The supremum over $t_0,x_0$ is ultimately a red herring, because the space-time translation can be transferred to $q$.  The boundedness and equicontinuity of the correspondingly larger set of initial data was demonstrated in Proposition~\ref{P:H-1 bound}.

To prove \eqref{E:C:LS5}, we first observe that
\begin{align}\label{breakfast}
\iint \phi(x)^2 \bigl[ |q'(t,x)|^2 + |q(t,x)|^2 \bigr]\,dx\,dt &\lesssim \| \phi q \|_{L^2_t H^1}^2 + \| \phi' q \|_{L^2_t H^1}^2
\end{align}
and then that
\begin{align}\label{breakfast'}
\| \phi q \|_{L^2_t H^1}^2 \lesssim \| \phi q \|_{L^\infty_t H^{-1}}^2 + \| (\phi q)'' \|_{L^2_t H^{-1}}^2 
\end{align}
(as well as the analogous assertion with $\phi\mapsto\phi'$).  In this way, \eqref{E:C:LS5} follows from the $\vk=1$ cases of Lemma~\ref{L:change}, Proposition~\ref{P:LS5}, and \eqref{E:P:H-1}.

We turn now to \eqref{E:C:LS5'} and adopt the notation $\phi_R(x)=R^{-1/2} \phi(x/R)$.  As it is our intention to employ \eqref{breakfast}, we should also consider $\phi_{R}' = R^{-3/2} \phi'(x/R)$ in what follows; however, given the generality afforded $\phi$, this is covered by the same analysis.

By first looking on the Fourier side, and then applying \eqref{elem mult}, we find
\begin{align}\label{lunch}
\| \phi_R q \|_{H^1}^2 \lesssim \vk^4  \| \phi_R q \|_{H^{-1}}^2 + \| (\phi_R q)'' \|_{H^{-1}_\vk}^2 \lesssim \tfrac{\vk^4}{R}  \| q \|_{H^{-1}}^2 + \| (\phi_R q)'' \|_{H^{-1}_\vk}^2
\end{align}
uniformly for $\vk\geq 1$.

We now focus our attention on the right-most term in \eqref{lunch}. As
\begin{align}
\| f \|_{H^{-1}_\vk}^2 = \tfrac{512}{7} \int_\R\bigl\langle \psi_z^{6} f, \bigl[\tfrac{1}{\psi_z^6}R_0(2\vk)\psi_z^{6}\bigr] \psi_z^{6} f\bigr]\rangle\,dz
\end{align}
for any $f$, so it follows from Lemma~\ref{L:w commutator} that
\begin{align}
\| (\phi_R q)'' \|_{H^{-1}_\vk}^2 \lesssim  \int_\R \ \bigl\| \phi_R \psi_z^6 q'' \bigr\|_{H^{-1}_\vk}^2 + \bigl\|\phi_R' \psi_z^6 q' \bigr\|_{H^{-1}_\vk}^2
		+ \bigl\| \phi_R'' \psi_z^6 q \bigr\|_{H^{-1}_\vk}^2\,dz.
\end{align}
Next we apply \eqref{elem mult}.  Noting that
\begin{align}
\int_\R \| \phi_R \psi_z^3 \|_{H^2}^2 + \| \phi_R' \psi_z^3 \|_{H^2}^2 + \| \phi_R'' \psi_z^3 \|_{H^2}^2 \,dz \lesssim 1
\end{align}
uniformly in $R$, we are lead to the conclusion
\begin{align}
\| (\phi_R q)'' \|_{H^{-1}_\vk}^2 \lesssim  \sup_z\Bigl\{ \ \bigl\| (\psi_z^3 q)'' \bigr\|_{H^{-1}_\vk}^2 + \bigl\| \psi_z^3 q' \bigr\|_{H^{-1}_\vk}^2
		+ \bigl\| \psi_z^3 q \bigr\|_{H^{-1}_\vk}^2 \Bigr\},
\end{align}
uniformly in $R$.  By applying  \eqref{E:L:change} and \eqref{E:L:change'}, we then obtain
\begin{align}
\| (\phi_R q)'' \|_{L^2_t H^{-1}_\vk}^2 \lesssim  \|q \|_{L^\infty_t H^{-1}_\vk}^2 + \vk^{-\frac23} \|q \|_{L^\infty_t H^{-1}}^2  + \|q\|_{\vLS}^2. 
\end{align}

Returning to \eqref{lunch} and employing Propositions~\ref{P:H-1 bound} and~\ref{P:LS5} we now deduce that
\begin{align}\label{lunch'}
\| \phi_R q \|_{L^2_t H^1}^2 \lesssim \tfrac{\vk^4}{R}  \| q(0) \|_{H^{-1}}^2 + \|q(0)\|_{H^{-1}_\vk}^2 + \vk^{-\frac16} \delta^2.
\end{align}
This quantity can be made arbitrarily small, uniformly in $q$, by choosing $\vk$ large and then $R$ even larger still.  The uniformity here uses the equicontinuity of the set of initial data.  
\end{proof}

\section{Compactness}\label{S:C}

This section is devoted to proving a key compactness property of solutions to \eqref{5th}:

\begin{proposition}\label{P:compact} Fix $\delta>0$ sufficiently small and let $Q\subseteq B_\delta \cap \Schw(\R)$ be precompact in $H^{-1}(\R)$.  Then
$$
Q_* = \bigl\{ e^{tJ\nabla H_\text{5th}} q : q\in Q\text{ and } t\in[-1,1]\bigr\}
$$
is also precompact in $H^{-1}(\R)$.
\end{proposition}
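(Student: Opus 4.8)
The plan is to verify the three conditions of Lemma~\ref{L:Riesz} for $Q_*$. Boundedness and equicontinuity of $Q_*$ follow at once from Proposition~\ref{P:H-1 bound} (with $t=\tau=0$) and the fact that $Q$, being precompact, is itself bounded and equicontinuous; note also that $Q_*$ lies in $B_{C\delta}$, which is harmless since $\delta$ is assumed small. So the whole content is the \emph{tightness} of $Q_*$. Fix a smooth $\eta$ with $\eta\equiv1$ on $\{|x|\ge1\}$ and $\eta\equiv0$ on $\{|x|\le\tfrac12\}$, and set $\eta_R(x):=\eta(x/R)$. Since every $f$ with $\supp f\subseteq\R\setminus[-R,R]$ satisfies $f=\eta_R f$, Cauchy--Schwarz in the $H^1$--$H^{-1}$ pairing reduces tightness of $Q_*$ to
\[
\lim_{R\to\infty}\ \sup_{q\in Q,\ |t|\le1}\ \bigl\| \eta_R\, e^{tJ\nabla H_\text{5th}}q \bigr\|_{H^{-1}}=0 .
\]
To bring the equation into play I would pass to the conserved density $\rho$ of \eqref{E:rho defn}, fixing a \emph{moderate} value of $\vk$, say $\vk=1$; the negative powers of $\vk$ exploited in Section~\ref{S:LS} will not be available. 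The structural fact I would invoke is the localized analogue of the coercivity already present in Lemma~\ref{L:2for2} and in \cite{KV}: writing $q(t)=e^{tJ\nabla H_\text{5th}}q$, one has
\[
\bigl\| \eta_R\, q(t)\bigr\|_{H^{-1}}^2 \lesssim A_R(t)+ o_R(1),\qquad A_R(t):=\int_\R \eta_R(x)^2\,\rho(x;\vk,q(t))\,dx ,
\]
uniformly over $q\in Q$ and $|t|\le1$, because $\rho$ microscopically dominates the $H^{-1}_\vk$-density up to errors controlled by the commutator estimates of Section~\ref{SS:commutator} and the paraproducts of Section~\ref{S:para}. It therefore suffices to show $\sup_{q\in Q,\,|t|\le1}A_R(t)\to0$ as $R\to\infty$.

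At $t=0$ this is easy: $Q$ is precompact in $H^{-1}$, hence tight, and $q\mapsto\rho(\cdot;\vk,q)$ is continuous from $H^{-1}$ into $L^1$ and commutes with translations (the continuity following from Lemma~\ref{L:diffeo}, $\rho\ge0$, and the continuity of $q\mapsto\int_\R\rho\,dx=2\vk\alpha(\vk,q)$), so $\{\rho(\cdot;\vk,q):q\in Q\}$ is precompact --- in particular uniformly tight --- in $L^1(\R)$; since $\eta_R^2\to0$ pointwise and $\rho\ge0$, dominated convergence gives $A_R(0)\to0$ uniformly over $Q$. For $t\neq0$ I would control the increment through the microscopic conservation law \eqref{mscl}:
\[
A_R(t)-A_R(0)=\int_0^t\!\!\int_\R (\eta_R^2)'(x)\, j_\text{5th}(s,x)\,dx\,ds,
\]
where $(\eta_R^2)'(x)=\tfrac1R\chi(x/R)$ for the fixed bump $\chi:=2\eta\eta'$ supported in $\{\tfrac12\le|x|\le1\}$. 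It remains to see that the right-hand side tends to $0$ as $R\to\infty$, uniformly over $Q$ and $|t|\le1$.

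This is precisely the scale-$R$ analogue of the analysis carried out in Section~\ref{S:LS}. One decomposes $j_\text{5th}=j_1+j_2+(\text{remainder})$ exactly as in Lemma~\ref{LS:j3}, and the cancellations exhibited in Lemmas~\ref{LS:j3}, \ref{L:2for2}, and~\ref{L:cubic} all go through with the fixed cutoff $\psi^{12}$ replaced by the weight $(\eta_R^2)'$. Every resulting term is then localized at spatial scale $R$ --- multiplication by $1/g$ is local, and $R_0(2\vk)$ has an exponentially decaying kernel of width $1/\vk\ll R$, so it preserves this localization --- and is quadratic or higher in $q$ with at most two derivatives; hence it is dominated by the square of the local smoothing norm of $q$ at spatial scale $R$ plus $\delta$ times similar scale-$R$ quadratic expressions. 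By the equicontinuous local smoothing bound \eqref{E:C:LS5'}, all of these tend to $0$ as $R\to\infty$, uniformly over $Q$ and $|t|\le1$. Combining the pieces gives $\sup_{q\in Q,\,|t|\le1}A_R(t)\to0$, hence tightness of $Q_*$, which together with boundedness and equicontinuity completes the proof.

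The principal obstacle is the reworking described in the previous paragraph: one must verify that each of the numerous cancellations and paraproduct bounds of Section~\ref{S:LS} (and of the auxiliary results of Sections~\ref{SS:commutator} and~\ref{S:para} it relies on) has a counterpart with the fixed localizing factor $\psi^{12}$ replaced by the scale-$R$ weight $(\eta_R^2)'=\tfrac1R\chi(\cdot/R)$, with all implied constants uniform in $R\ge1$, and --- crucially --- that every error term genuinely inherits the scale-$R$ spatial localization rather than being ``extensive'' in $R$, so that the vanishing supplied by \eqref{E:C:LS5'} can be used. Since $\vk$ is now frozen, the smallness previously extracted from negative powers of $\vk$ must come entirely from the decay in $R$. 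A secondary technical point is the microscopic comparison $\|wq\|_{H^{-1}}^2\lesssim\int w^2\rho\,dx+\text{(errors)}$ underlying the reduction to $A_R$; it is of the same nature as the coercivity established in Lemma~\ref{L:2for2}.
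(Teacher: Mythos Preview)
Your overall strategy is exactly that of the paper: boundedness and equicontinuity come from Proposition~\ref{P:H-1 bound}, and tightness is proved by passing to the conserved density $\rho$, controlling $A_R(0)$ by precompactness of $Q$, and then controlling the increment $A_R(t)-A_R(0)$ via the microscopic conservation law \eqref{mscl} and the equicontinuous local smoothing estimate \eqref{E:C:LS5'}. The paper also supplies the ``secondary technical point'' you flag, namely the two-sided local comparison $\|wq\|_{H^{-1}_\vk}^2\approx\int w^2\rho$, as Lemma~\ref{L:q vs rho}; note, however, that this requires $\vk\geq\vk_0$ sufficiently large, so your choice $\vk=1$ should be replaced by a fixed but large $\vk$.

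Where you diverge from the paper is in the analysis of $\int\!\!\int (\eta_R^2)'\,j_\text{5th}$. You propose to rerun the delicate decomposition $j_\text{5th}=j_1+j_2+(\text{remainder})$ and all of the cancellations of Lemmas~\ref{LS:j3}, \ref{L:2for2}, and~\ref{L:cubic} at spatial scale $R$, together with scale-$R$ versions of the paraproduct machinery of Section~\ref{S:para}. This is in principle feasible but, as you yourself note, constitutes a substantial reworking. The paper observes that it is entirely unnecessary: once $\vk$ is frozen, one no longer needs any decay in $\vk$, and so the subtle cancellations of Section~\ref{S:LS} can be dispensed with. Instead, one estimates each term of $j_\text{5th}$ directly and crudely. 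For instance, with $\phi_R(x)=\sech((x-x_0)/R)$ so that $\Psi_R'=\tfrac1{2R}\phi_R^2$, one simply bounds
\[
\tfrac1R\Bigl|\iint \tfrac{2\vk}{g(\vk)}\,q^2\,\phi_R^2\Bigr|\lesssim \|\tfrac1g\|_{L^\infty_{t,x}}\cdot\tfrac1R\|\phi_R q\|_{L^2_{t,x}}^2,
\qquad
\tfrac1R\Bigl|\iint 5(q')^2\,\phi_R^2\Bigr|\lesssim \tfrac1R\|\phi_R q'\|_{L^2_{t,x}}^2,
\]
and similarly for every other constituent, using only $\tfrac1g\in L^\infty$, $g-\tfrac1{2\vk}\in L^\infty_tH^1_x$, and \eqref{E:C:LS5'}. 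The operator $4\vk^2 R_0(2\vk)$ is removed via $4\vk^2R_0(2\vk)\Psi_R'=\Psi_R'-R_0(2\vk)\Psi_R'''$, the second term gaining two extra powers of $R^{-1}$. No paraproduct analysis and no cancellation between terms of $j_\text{5th}$ is invoked; the entire argument fits in half a page. So your ``principal obstacle'' is not one.
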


Evidently, this conclusion would follow from Theorem~\ref{T:main}, because the continuous image of the compact set $[-1,1]\times Q$ is compact.  However, we will need this compactness result in order to prove that theorem.  Its principal role is to lessen the continuity requirements we need to show on sequences of solutions, by reducing the question of norm convergence to one of weak convergence.

As discussed earlier, precompactness comprises three ingredients: boundedness, equicontinuity, and tightness.  The first two follow from Proposition~\ref{P:H-1 bound}; our central enemy in this section is tightness. 

In order to control the transport of the $H^{-1}$ norm of a solution, it is convenient to employ the conserved density $\rho$.  While it has been shown previously that $\int \rho$ controls the global $H^{-1}$ norm (cf. \eqref{E:rho alpha}), we need such an equivalence that holds \emph{locally} in space.  This is new and the subject of our next lemma:
 
\begin{lemma}\label{L:q vs rho}
Fix $\delta>0$ sufficiently small and $w:\R\to(0,\infty)$ that satisfies
\begin{equation}\label{E:w hyp'}
|w''(x)| + |w'(x)|\leq w(x) \qtq{and} \frac{w(y)}{w(x)}\leq e^{|x-y|/2} .
\end{equation}
Then for $\vk_0\geq 1$ sufficiently large {\upshape(}independent of $w${\upshape)},
\begin{equation}\label{E:rho_local}
\tfrac12 \| w q \|_{H_\varkappa^{-1}(\R)}^2 \leq    \| w^2 \rho(\vk) \|_{L^1(\R)}
	\leq 2 \| w q \|_{H_\varkappa^{-1}(\R)}^2
\end{equation}
uniformly for $\vk\geq \vk_0$ and $q\in B_\delta\cap\Schw(\R)$.
\end{lemma}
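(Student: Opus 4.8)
Since $\rho(\vk)\geq0$ (shown in \cite{KV}), we have $\|w^2\rho(\vk)\|_{L^1}=\int w^2\rho\,dx$, so both bounds in \eqref{E:rho_local} follow once we show $\int w^2\rho\,dx=\|wq\|_{H^{-1}_\vk}^2\bigl(1+O(\vk^{-1})+O(\delta)\bigr)$ with constants depending only on the constants in \eqref{E:w hyp'}: taking $\vk_0$ large and then $\delta$ small squeezes the parenthesis into $[\tfrac12,2]$. Note that $w$, and every power $w^a$ with $0<a\le1$, satisfies the hypotheses of Lemma~\ref{L:w commutator}. The first step is to expand $\rho$ and split off the part quadratic in $q$: setting $u:=g-\tfrac1{2\vk}=\sum_{\ell\ge1}h_\ell$ and $\gamma:=2\vk u$, the elementary identity $\bigl(\tfrac1{2\vk}+u\bigr)^{-1}=2\vk-4\vk^2u+\tfrac{8\vk^3u^2}{1+\gamma}$ together with $4\vk^2R_0(2\vk)q=-4\vk^3h_1$ (from \eqref{h1 nice}) yields $\rho=4\vk^3\sum_{\ell\ge2}h_\ell-\tfrac{8\vk^4u^2}{1+\gamma}$. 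Write $\rho=\rho_2+\mathcal E$ with
\[
\rho_2:=4\vk^3h_2-8\vk^4h_1^2,\qquad \mathcal E:=\rho-\rho_2,
\]
and observe that $\mathcal E$ consists solely of terms of degree $\ge3$ in $q$; moreover $\tfrac1{1+\gamma}=\tfrac1{2\vk g}$ is a bounded multiplier equal to $1+O(\vk^{-1/2}\delta)$ by \eqref{E:h in Linfty}.

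For the error $\mathcal E$ one shows $\bigl|\int w^2\mathcal E\,dx\bigr|\lesssim\vk^{-1/2}\delta\,\|wq\|_{H^{-1}_\vk}^2$. Each constituent is $\vk^a$ times a product of at least three of the $h_\ell$ (times the harmless factor $\tfrac1{1+\gamma}$), and is treated by the method of Corollary~\ref{C:h in L^1}: with $R_0=R_0(\vk)$, write $\int w^2h_\ell=\pm\Tr\{w^2R_0(qR_0)^\ell\}$, route one copy of $w$ onto each of two copies of $q$ using \eqref{E:w commutator} and its adjoint — here one uses that $q\,R_0\,w=(wq)\sqrt{R_0}(\Id+B^*)\sqrt{R_0}$ since multiplication operators commute — and apply H\"older in the Schatten classes, placing the two factors $\sqrt{R_0}(wq)\sqrt{R_0}$ in $\I_2$, where Lemma~\ref{L:HS} bounds each by $\vk^{-1/2}\|wq\|_{H^{-1}_\vk}$, and the remaining copies of $q$ together with the leftover resolvents in $\I_\infty$, each bare $q$ contributing $\vk^{-1/2}\|q\|_{H^{-1}}\le\vk^{-1/2}\delta$. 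The residual powers of $\vk$ supply the stated decay and the sum over $\ell$ converges because $\delta\ll1$; the $\vk^{-2}(R_0(2\vk)q)^3$-type terms coming from the numerator of $\tfrac{8\vk^4u^2}{1+\gamma}$ are handled similarly after extracting $\|R_0(2\vk)q\|_{L^\infty}\lesssim\vk^{-3/2}\delta$.

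The heart of the matter is the quadratic term, and the main obstacle is that $4\vk^3h_2$ and $8\vk^4h_1^2$ cannot be estimated separately — each alone controls only the stronger quantity $\|wq\|_{L^2}^2$ — so the cancellation between them must be exhibited. Using \eqref{h1 nice} and \eqref{h2 nice}, both $\int w^2h_2$ and $\int w^2h_1^2$ are quadratic forms $\langle q,\mathcal K q\rangle$ with explicit integral kernels: $\int w^2h_2=\langle q,\mathcal K_2 q\rangle$ with $\mathcal K_2(y,z)=R_0(\vk)(y,z)\,[R_0(\vk)\,w^2\,R_0(\vk)](y,z)$, while $\int w^2h_1^2=\vk^{-2}\langle q,R_0(2\vk)w^2R_0(2\vk)q\rangle$ and $\|wq\|_{H^{-1}_\vk}^2=\langle q,w\,R_0(2\vk)\,w\,q\rangle$. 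Evaluating these kernels from $R_0(\vk)(y,z)=\tfrac1{2\vk}e^{-\vk|y-z|}$ — the only nontrivial integral being $\int e^{-\vk(|y-a|+|a-z|)}\,da=e^{-\vk|y-z|}\bigl(|y-z|+\tfrac1\vk\bigr)$ — and using that $w$ varies on unit scale whereas these kernels concentrate on scale $\vk^{-1}$, one finds that the $|y-z|$-proportional (leading) parts of $4\vk^3\mathcal K_2$ and $8\vk^2R_0(2\vk)w^2R_0(2\vk)$ cancel, leaving precisely the kernel of $wR_0(2\vk)w$ up to a remainder $E$. Conjugating by $w^{-1}$ (permissible since $w$ is moderate) and running Schur's test on the conjugated kernel gives $\bigl|\langle f,w^{-1}Ew^{-1}f\rangle\bigr|\lesssim\vk^{-1}\|f\|_{H^{-1}_\vk}^2$, whence $\int w^2\rho_2\,dx=\|wq\|_{H^{-1}_\vk}^2\bigl(1+O(\vk^{-1})\bigr)$. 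The delicate point is carrying out this kernel bookkeeping for an arbitrary admissible weight $w$ rather than for a fixed power of $\psi$ as in Lemma~\ref{L:2xC}; all the needed bounds on derivatives of $w$ and on $w$-conjugates of resolvents follow from \eqref{E:w hyp'} with constants independent of $w$, which is exactly why $\vk_0$ may be chosen independently of $w$. Adding the two estimates completes the proof.
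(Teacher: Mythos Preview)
Your overall architecture matches the paper's: isolate the quadratic part $\rho_2=4\vk^3h_2-8\vk^4h_1^2$, control the degree-$\geq 3$ remainder by routing one copy of $w$ onto each of two $q$'s via Lemma~\ref{L:w commutator} and then applying H\"older in Schatten classes, and finally exhibit the cancellation in $\rho_2$.  The paper reaches the same quadratic form by first multiplying $\rho$ by $2\vk g$ (which clears the denominator in one step) rather than expanding $\tfrac1g$; your series expansion is an equivalent bookkeeping device and your treatment of the cubic-and-higher error is correct and essentially identical to the paper's.

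The substantive difference is in how the quadratic cancellation is handled.  The paper does \emph{not} compute position-space kernels.  Instead it uses Lemma~\ref{L:w commutator} to write $\int w^2h_2\,dx=\Tr\{\sqrt{R_0}\,(wq)\,R_0^2\,(wq)\,\sqrt{R_0}\}+O(\vk^{-4}\|wq\|_{H^{-1}_\vk}^2)$ and $\int w^2h_1^2\,dx=\vk^{-2}\langle wq,R_0(2\vk)^2wq\rangle+O(\vk^{-5}\|wq\|_{H^{-1}_\vk}^2)$, after which the combination $4\vk^3\Tr\{\sqrt{R_0}(wq)R_0^2(wq)\sqrt{R_0}\}-8\vk^2\langle wq,R_0(2\vk)^2wq\rangle=\|wq\|_{H^{-1}_\vk}^2$ is an \emph{exact} identity in Fourier variables (obtained by differentiating \eqref{E:L:HS} in $\vk$).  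Your explicit-kernel computation with $w\equiv1$ recovers this same identity in position space, which is a pleasant alternative derivation.

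Where your argument becomes thin is the passage to general $w$.  You write the remainder as an operator $E$ and assert that ``Schur's test on the conjugated kernel gives $|\langle f,w^{-1}Ew^{-1}f\rangle|\lesssim\vk^{-1}\|f\|_{H^{-1}_\vk}^2$.''  A Schur bound on the kernel of $w^{-1}Ew^{-1}$ yields only an $L^2\to L^2$ estimate, i.e.\ control by $\|f\|_{L^2}^2$, not by $\|f\|_{H^{-1}_\vk}^2$.  What you actually need is an $H^{-1}_\vk\to H^1_\vk$ bound on $w^{-1}Ew^{-1}$; a pointwise kernel domination by $R_0(2\vk)$ does not follow either, since the $h_2$ part of $E$ carries only $e^{-\vk|y-z|}$ decay.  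The clean way to obtain the correct error estimate for non-constant $w$ is precisely to commute $w$ through the resolvents using Lemma~\ref{L:w commutator} \emph{before} combining the two quadratic pieces---which is exactly the paper's route, and automatically produces remainders bounded in the $H^{-1}_\vk$ pairing because the operator $B$ in \eqref{E:w commutator} is bounded on $L^2$.  So your kernel approach can be repaired, but the repair essentially imports the paper's operator-commutation step; as written, the Schur-test sentence does not close.
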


\begin{proof}
Notice that \eqref{E:w hyp'} guarantees that \eqref{E:w commutator} holds with an absolute constant.  This will be important for ensuring that $\vk_0$ does not depend on $w$.

Recall that for $\vk\geq 1$ and $\delta$ sufficiently small, both $\rho\geq 0$ and $g\geq 0$.
From this and \eqref{E:h in Linfty}, we may then deduce that for  $\delta$ sufficiently small,
\begin{align*}
\bigl\| [\rho-2\vk g\rho] w^2 \bigr\|_{L^1} \lesssim \vk^{-\frac12} \bigl\| \rho \,w^2 \bigr\|_{L^1}
	\lesssim \vk^{-\frac12} \bigl\| 2\vk g\rho\, w^2\bigr\|_{L^1} .
\end{align*}
In this way, we see that it suffices to prove \eqref{E:rho_local} with $\rho$ replaced by $2\vk g\rho$.

The benefit of this reduction (indeed equivalency) is that it removes $g$ from the denominator:
\begin{align}\label{2vkgrho}
2\vk g\rho = \bigl( 4\vk^3 h_2 - 8\vk^4 h_1^2\bigr) - 8\vk^4 \bigl(g-\tfrac1{2\vk}-h_1\bigr)h_1 + 4\vk^3 \sum_{\ell\geq 3} h_\ell .
\end{align}

From Lemmas~\ref{L:w commutator} and~\ref{L:HS} we see that
\begin{align}\label{grho1}
\int h_2 w^2 \,dx = \Tr\Bigl\{\sqrt{R_0(\vk)} w q R_0(\vk)^2 w q \sqrt{R_0(\vk)}\Bigr\} + O\Bigl( \vk^{-4} \| w q \|_{H_\varkappa^{-1}}^2 \Bigr)
\end{align}
and analogously that
\begin{align}\label{grho2}
\sum_{\ell\geq 3} \biggl| \int h_\ell w^2 \,dx \biggr| \lesssim \vk^{-\frac{9}2} \delta \| w q \|_{H_\varkappa^{-1}}^2 .
\end{align}
Next we employ Lemma~\ref{L:w commutator} and \eqref{h1 nice} in a similar fashion to see that
\begin{align}\label{grho3}
\int h_1^2 w^2 \,dx = \vk^{-2} \bigl\langle w q,\ R_0(2\vk)^2 w q\bigr\rangle + O\Bigl( \vk^{-5} \| w q \|_{H_\varkappa^{-1}}^2 \Bigr).
\end{align}

As our last preliminary before treating the terms in \eqref{2vkgrho}, we observe that the techniques just used show
\begin{align}\label{grho4}
\bigl\| [g-\tfrac1{2\vk}-h_1] w^2 \bigr\|_{L^1} \lesssim \vk^{-3} \| w q \|_{H_\varkappa^{-1}}^2 .
\end{align}
We use this with \eqref{E:h in Linfty} to handle the middle term in \eqref{2vkgrho}.

Putting everything together, we find that
\begin{align*}
 \int 2\vk g\rho w^2 \,dx &= 4\vk^3 \Tr\Bigl\{\sqrt{R_0(\vk)} w q R_0(\vk)^2 w q \sqrt{R_0(\vk)}\Bigr\} \\
&\quad -8\vk^2\bigl\langle w q,\ R_0(2\vk)^2 w q\bigr\rangle + O\Bigl( \vk^{-\frac12} \| w q \|_{H_\varkappa^{-1}}^2 \Bigr) \\
&= \| w q \|_{H_\varkappa^{-1}}^2  + O\Bigl( \vk^{-\frac12} \| w q \|_{H_\varkappa^{-1}}^2 \Bigr).
\end{align*}
Note that the last step here just involves exact computation of the leading term, as can be done, for example, by differentiating the identity \eqref{E:L:HS} with respect to $\vk$.

The sought-after equivalence now follows by choosing $\vk$ sufficiently large relative to the (absolute) constant implicit in the big Oh notation.
\end{proof}

In what follows, we will use the following localization to large positive $x$:
\begin{align}
\Psi_{\!R}(x) := \tfrac12 + \tfrac12 \tanh\bigl(\tfrac{x-x_0(R)}{R}\bigr) \qtq{with} x_0(R)=R^2
\end{align}
and the corresponding localization $\Psi_{\!R}(-x)$ to large negative $x$.  The exact choice of $x_0(R)$ is not important; we merely require that $x_0/R\to\infty$ as $R\to\infty$.  Evidently, we have
\begin{align}\label{if we set}
\Psi_{\!R}'(x) := \tfrac{1}{2R} \phi_R(x)^2 \qtq{if we set} \phi_R(x) = \sech\bigl(\tfrac{x-x_0}R\bigr).
\end{align}
We write the derivative in this way, to draw an analogy with Corollary~\ref{C:LS5}.  We also note that the hypotheses of Lemma~\ref{L:q vs rho} are satisfied with $w(x)=\sqrt{\Psi_R(\pm x)}$, provided $R$ is sufficiently large.

Our last observation about this choice of cut-off is that
\begin{align}\label{1/Psi_R}
\bigl\| f/ \sqrt{\Psi_R}\, \bigr\|_{H^1_\vk} \lesssim \|f\|_{H^1_\vk}
\end{align}
uniformly for $\vk\geq 1$ and $f\in H^1(\R)$ with $\supp(f)\subseteq [R,\infty)$.

\begin{proof}[Proof of Proposition~\ref{P:compact}]
As noted earlier, the boundedness and equicontinuity of $Q_*$ follow from Proposition~\ref{P:H-1 bound}.  As well as their direct contribution to compactness, these properties will also play a key role in the proof of tightness.

From the compactness of $Q$ we find
\begin{align*}
\lim_{R\to\infty}\ \sup_{q\in Q} \ \bigl\| q \sqrt{\Psi_R}\, \bigr\|_{H^{-1}} =0.
\end{align*}
Indeed, as $Q$ is totally bounded, it suffices to verify this for individual $q$.  Combining this with Lemma~\ref{L:q vs rho}, we deduce that for $\vk$ sufficiently large, 
\begin{align}\label{compact in}
\lim_{R\to\infty}\ \sup_{q\in Q} \ \int_\R \rho(x;\vk,q) \bigl[\Psi_R(x)+\Psi_R(-x)\bigr]\,dx =0.
\end{align}
Henceforth, $\vk$ will remain fixed and implicit constants will be permitted to depend on it. In the converse direction, Lemma~\ref{L:Riesz} and \eqref{1/Psi_R} show us that the compactness of $Q_*$ will follow if we can prove
\begin{align}\label{compact out}
\lim_{R\to\infty}\ \sup_{q\in Q} \ \sup_{t\in[-1,1]} \int_\R \rho(x;\vk,q(t)) \bigl[\Psi_R(x)+\Psi_R(-x)\bigr] \,dx =0,
\end{align}
where $q(t)$ is defined from its initial data $q(0)=q\in Q$ by the $H_\text{5th}$-flow.

Comparing \eqref{compact in} and \eqref{compact out} and invoking the basic microscopic conservation law, we see that the proposition can be proved by showing
\begin{align}\label{compact goal}
\lim_{R\to\infty}\ \sup_{q\in Q} \ \biggl\| \int_\R j_\text{5th}(x;\vk,q(t)) \bigl[\Psi_R'(x)-\Psi_R'(-x)\bigr] \,dx \biggr\|_{L^1_t}=0.
\end{align}
This is what we shall do.  To improve readability, we will drop the $\Psi_R'(-x)$ term in what follows.  Its contributions may be handled in a parallel manner.

The analysis of \eqref{compact goal} will be much simpler than the parallel analysis in Section~\ref{S:LS}, because we no longer need to demonstrate decay in $\vk$.  This decay was essential for proving Corollary~\ref{C:LS5}, which we will now use to verify \eqref{compact goal}.  Recall that
\begin{align*}
j_\text{5th} &= \tfrac{2\vk}{g(\vk)} \bigl\{ 3 q^2 - 4 \vk^2 q - q'' - 16\vk^5 \bigl[g(\vk)-\tfrac{1}{2\vk}\bigr]\bigr\} \\
&\quad -  4\vk^2 R_0(2\varkappa) \big[q^{(4)} - 5 (q^2)'' + 5 (q')^2 + 10 q^3\big].
\end{align*}

Working our way through the terms in the first row using \eqref{if we set}, we have
\begin{gather*}
\tfrac1R \int\bigg|\int  \tfrac{6\vk}{g(\vk)} q^2 \phi_R^2 \,dx\bigg|\,dt
	\lesssim \bigl\| \tfrac1g \bigr\|_{L^\infty_{t,x}} \tfrac1R \| \phi_R q \|_{L^2_{t,x}}^2 \\
\tfrac1R \int\bigg|\int  \tfrac{8\vk^3}{g(\vk)} q \phi_R^2 \,dx\bigg|\,dt
	\lesssim \bigl\| \tfrac1g \bigr\|_{L^\infty_{t,x}} \| \phi_R \|_{L^2_{x}} \tfrac1R \| \phi_R q \|_{L^2_{t,x}} \\
\tfrac1R \int\bigg|\int  \tfrac{2\vk}{g(\vk)} q'' \phi_R^2 \,dx\bigg|\,dt
	\lesssim \bigl\| \tfrac1g \phi_R \bigr\|_{L^\infty_t H^1} \tfrac1R \Bigl\{ \| \phi_R q' \|_{L^2_{t,x}} + \| \phi_R' q \|_{L^2_{t,x}} \Bigr\} \\
\tfrac1R \int\bigg|\int  \tfrac{32\vk^6}{g(\vk)} [g(\vk)-\tfrac{1}{2\vk}\bigr] \phi_R^2 \,dx\bigg|\,dt
	\lesssim \tfrac1R  \bigl\| \tfrac1g \bigr\|_{L^\infty_{t,x}} \bigl\| g -\tfrac1{2\vk}\bigr\|_{L^2_{t,x}} \| \phi_R^2 \|_{L^2_{x}},
\end{gather*}
all of which are acceptable thanks to Corollary~\ref{C:LS5} and elementary calculations.

Looking now at the second row of terms in the definition of $j_\text{5th}$, we find that the operator $4\vk^2 R_0(2\vk)$ causes some irritation.  To handle this, we write
$$
4\vk^2 R_0(2\varkappa) \Psi_{\!R}' = \Psi_{\!R}' - R_0(2\varkappa)\Psi_{\!R}''' .
$$
While the operator remains in the second term, the two additional powers of $R^{-1}$ arising from the derivatives make this term trivial to handle.  Focusing instead on the dominant terms we estimate as follows:
\begin{gather*}
\tfrac1R \int\bigg|\int  q^{(4)} \phi_R^2 \,dx\bigg|\,dt
	\lesssim \| q\|_{L^\infty_t H^{-1}} \tfrac1R \| \phi_R^2 \|_{L^\infty_t H^5} \\
\tfrac1R \int\bigg|\int  5 (q^2)'' \phi_R^2 \,dx\bigg|\,dt 
	\lesssim \tfrac1R  \| \phi_R q' \|_{L^2_{t,x}} \| \phi_R' q \|_{L^2_{t,x}}\\
\tfrac1R \int\bigg|\int  5 (q')^2 \phi_R^2 \,dx\bigg|\,dt
	\lesssim \tfrac1R \| \phi_R q' \|_{L^2_{t,x}}^2 \\
\tfrac1R \int\bigg|\int  10 q^3 \phi_R^2 \,dx\bigg|\,dt
	\lesssim \tfrac1R  \| q\|_{L^\infty_t H^{-1}} \| \phi_R q \|_{L^2_t H^1}^2 .
\end{gather*}
Once again, these are readily seen to be acceptable via Corollary~\ref{C:LS5}.
\end{proof}

\section{Local smoothing for the difference flow}\label{S:dLS}

Our primary goal in this section is to prove the following local smoothing estimate for the difference flow, that is, the flow generated by the Hamiltonian $H_\text{5th} - H_\kappa$.  At the end of the section, we apply this to control how the two flows diverge from one another; see Corollary~\ref{C:dLS}.

\begin{proposition}[Local smoothing for the difference flow]\label{P:dLS}
For $\delta$ sufficiently small and $\kappa_0 \geq 1$ sufficiently large,
\begin{equation}\label{eq:local_smoothing_difference}
\bigl\| e^{tJ\nabla (H_\text{5th}-H_\kappa)} q \bigr\|^2_{\LS} \lesssim  1
\end{equation}
uniformly for $q\in B_\delta\cap \Schw(\R)$ and $\kappa\geq \kappa_0$.
\end{proposition}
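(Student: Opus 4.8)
The plan is to mimic the proof of Proposition~\ref{P:LS5}, but with the equation \eqref{5th} replaced by the difference flow and the current $j_\text{5th}$ replaced by the current of that flow. Fix $q\in B_\delta\cap\Schw(\R)$ and put $q_\kappa(t):=e^{tJ\nabla(H_\text{5th}-H_\kappa)}q$ for $t\in[-1,1]$. Since the constituent flows commute and each preserves $\Schw(\R)$, the function $q_\kappa(t)$ is Schwartz; in particular $\|q_\kappa\|_{\LS}<\infty$, so a bootstrap is legitimate. By Proposition~\ref{P:H-1 bound}, $\|q_\kappa(t)\|_{H^{-1}_\vk}\approx\|q\|_{H^{-1}_\vk}$ uniformly in $t,\vk,\kappa$, so after replacing $\delta$ by a fixed multiple of itself we may assume $q_\kappa:[-1,1]\to B_\delta$. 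Consequently every estimate of Sections~\ref{S:para}--\ref{S:LS} applies to $q_\kappa$, and we will invoke these with the energy parameter comparable to the large flow parameter $\kappa$; this largeness, together with the smallness of $\delta$, supplies the bootstrap gain.

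Subtracting the identity $\partial_t\rho+\partial_x j_\kappa=0$ of \eqref{E:rho dot k} from \eqref{mscl} and using bilinearity of the Poisson bracket, along the difference flow the density $\rho=\rho(\,\cdot\,;\vk,q_\kappa)$ obeys $\partial_t\rho+\partial_x(j_\text{5th}-j_\kappa)=0$. We take $\vk$ comparable to but distinct from $\kappa$ (say $\vk=\kappa/2$, which avoids the removable singularity at $\vk=\kappa$ in \eqref{E:rho dot k}); since $\|\,\cdot\,\|_{\LS}\le\|\,\cdot\,\|_{\vLS}$ whenever $\vk\le\kappa$, it suffices to bound $\|q_\kappa\|_{\vLS}$. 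Pairing the conservation law with $\Psi(x)=\int_x^\infty\psi^{12}$ exactly as in \eqref{E:int micro},
$$
\int[\rho(1,x)-\rho(-1,x)]\Psi(x)\,dx=-\int_{-1}^{1}\!\!\int_{\R}(j_\text{5th}-j_\kappa)(t,x)\,\psi(x)^{12}\,dx\,dt ,
$$
and, by $\rho\ge0$, \eqref{E:rho alpha}, and Proposition~\ref{P:H-1 bound}, the left-hand side is $\lesssim\|\rho\|_{L^\infty_tL^1_x}\approx\|q_\kappa\|_{L^\infty_tH^{-1}_\vk}^2\lesssim\delta^2\lesssim1$. The task is thus to show that the right-hand side dominates $\|q_\kappa\|_{\vLS}^2$ up to acceptable errors.

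The heart of the matter is the analysis of $j_\text{5th}-j_\kappa$. Expanding $g$ in the Neumann series \eqref{E:g defn} inside both currents and invoking the algebraic identities of Lemma~\ref{L:series expansion} — precisely the identities that force \eqref{E:H_kappa flow} to converge to \eqref{5th'} — the dangerous powers of $\kappa$ in $j_\kappa$ cancel against one another and against $j_\text{5th}$. After this bookkeeping, $j_\text{5th}-j_\kappa$ decomposes, modulo a remainder that is summable over $\ell\ge4$ (controlled by Corollaries~\ref{C:h in L^1} and~\ref{C:h in H^1}), into a quadratic current $j_1^{\mathrm d}$, a dominant cubic current $j_2^{\mathrm d}$, and commutator and lower-order errors of exactly the shapes handled in Lemmas~\ref{LS:j3},~\ref{L:cubic},~\ref{L:g up},~\ref{L:hhh},~\ref{L:h1h3} and Proposition~\ref{P:h1}; because $j_\kappa$ at energy $\sim\kappa$ differs from $j_\text{5th}$ only by pieces carrying extra factors of $h_1(\kappa)$ or $h_2(\kappa)$, each of which supplies a negative power of $\vk$ through Section~\ref{S:para}, every such error is $\lesssim\vk^{-c}\{1+\|q_\kappa\|_{\vLS}^2\}$ for some $c>0$, and $\bigl|\int j_2^{\mathrm d}\psi^{12}\bigr|\lesssim\vk^{-1/6}\delta\{1+\|q_\kappa\|_{\vLS}^2\}$ by the argument of Lemma~\ref{L:cubic}. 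For $j_1^{\mathrm d}$ I would rerun the computation of Lemma~\ref{L:2for2}: the quadratic part of $H_\text{5th}-H_\kappa$ is the Fourier multiplier $\tfrac12\,\xi^6/(\xi^2+4\kappa^2)$, which coincides with the quadratic part of $H_\text{5th}$ for $|\xi|\gtrsim\kappa$ and is $O(\kappa^{-2})$ relative to it for $|\xi|\lesssim\kappa$ — the regime where the subtle balance of the introduction operates, the difference flow still dispersing frequencies fast enough that $-\int j_1^{\mathrm d}\psi^{12}$ reproduces $\|(\psi^6 q_\kappa)''\|_{L^2_tH^{-1}_\vk}^2$ up to the same $\vk^{-1/3}\{1+\|q_\kappa\|_{\vLS}^2\}$-type errors supplied by Lemma~\ref{L:2xC}. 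Combining these displays, taking the supremum over spatial translates to restore the parameter $z$ of Definition~\ref{D:LS}, and choosing $\delta$ small and $\kappa_0$ large enough to absorb the $\vk^{-c}\|q_\kappa\|_{\vLS}^2$ and $\delta\|q_\kappa\|_{\vLS}^2$ terms into the left-hand side, we obtain $\|q_\kappa\|_{\vLS}^2\lesssim1$ and hence $\|q_\kappa\|_{\LS}^2\lesssim1$.

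The hard part is the step just described: arranging the subtraction $j_\text{5th}-j_\kappa$ so that the leading powers of $\kappa$ cancel \emph{identically} and so that what survives matches, term by term, quantities already estimated in Section~\ref{S:LS}, and — within that — verifying that the residual quadratic current $j_1^{\mathrm d}$ retains \emph{full-strength} coercivity against the $\vLS$ norm despite the regularization built into $H_\kappa$. This is the narrow balancing act flagged in the introduction: the difference flow must transport high frequencies vigorously enough to enjoy a local smoothing effect, yet the resulting bound must nonetheless be only $O(1)$. It is why this section, though short, rests so heavily on Lemma~\ref{L:series expansion} and on the sharp paraproduct estimates of Section~\ref{S:para}.
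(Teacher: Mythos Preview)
Your outline diverges from the paper's argument in three structural ways, and at least one of them opens a genuine gap. The paper keeps the energy parameter $\vk$ in $\rho$ \emph{fixed} (remarking that $\vk=1$ would do) and lets only $\kappa$ be large; it does \emph{not} decompose $j_\text{5th}-j_\kappa$ by polynomial degree, but rather by origin, isolating the piece $j_0$ coming from $\partial_t q$ (see \eqref{j0 defn}) and treating the remainder (which equals $\partial_t\tfrac{\vk}{g(\vk)}$ as in \eqref{jj defn}) separately; and it uses a modified antiderivative $\Psi$ from \eqref{E:Psi6} so that $-4\vk^2 R_0(2\vk)\Psi'=\psi^{12}$, which strips the $R_0(2\vk)$ from $j_0$ and makes its quadratic part match (up to a sign-definite term) the quantity $I(\kappa)$ of Lemma~\ref{L:2for2}. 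Coercivity (Lemma~\ref{L:dLS 2}) then follows from \eqref{2for2}, and the remainder is bounded in Lemma~\ref{L:dLS 1}.

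The problematic step in your plan is the blanket claim that every error carries a $\vk^{-c}$ prefactor. Inside the remainder $j_\text{5th}-j_\kappa-j_0$ sits, via \eqref{diff current1} and \eqref{E:h1 exp}, a contribution $-\tfrac{2\vk}{g(\vk)}\,\kappa h_1^{(4)}(\kappa)$; after two integrations by parts its pairing with $\phi$ is essentially $2\vk\langle\partial_x^2\tfrac{1}{g(\vk)},\,R_0(2\kappa)\phi q''\rangle$, which is quadratic in $q$ but carries \emph{no} inverse power of $\kappa$. In the paper's scheme this is handled by the bound $\bigl\|\partial_x^2\tfrac{1}{g(\vk)}\bigr\|_{L^\infty_t H^{-1}_\kappa}\lesssim\bigl\|\tfrac{1}{g(\vk)}-2\vk\bigr\|_{L^\infty_t H^1_\vk}\lesssim 1$ (valid because $\vk$ is fixed) followed by Young's inequality with a small parameter $\eps$ to absorb $\|q\|_{\LS}$. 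With your choice $\vk=\kappa/2$, however, $\tfrac{1}{g(\vk)}-2\vk\approx 4\vk R_0(2\vk)q$, so $\bigl\|\partial_x^2\tfrac{1}{g(\vk)}\bigr\|_{H^{-1}_\kappa}$ can be of order $\kappa\delta$, and the further prefactor $2\vk\sim\kappa$ makes the term blow up. Equivalently, in a degree-based decomposition the quadratic cross term $\vk^3\kappa\,h_1(\vk)h_1^{(4)}(\kappa)$ with $\vk\sim\kappa$ is of size $\{\delta+\|q\|_{\vLS}\}^2$ after integration by parts and Proposition~\ref{P:h1}, i.e.\ comparable to the coercive term itself; it fits none of the templates of Section~\ref{S:LS} and cannot be bootstrapped away. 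The remedy is precisely the paper's: keep $\vk$ fixed, split by origin rather than by degree, and close with the $\eps$-trick.
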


Once again, we employ a spatially localized version of the conservation laws discussed in subsection~\ref{SS:diagonal}.  In this section, the parameter $\vk$ will be regarded as fixed (it would suffice to set $\vk=1$) and correspondingly, all implicit constants will be permitted to depend on it.  While we will be reusing many of the same estimates exhibited in Section~\ref{S:LS}, the nature of the cancellations involved is rather different.  Let us explain this more fully.

Recall that the currents \eqref{E:rho dot 5} and \eqref{E:rho dot k} split naturally into two parts, corresponding to the second and third summands in \eqref{E:rho defn}: in each formula, the top line originates in the time derivative of the diagonal Green's function, while the second line comes from $\partial_t q$.  In Section~\ref{S:LS}, the essential cancellations were between the two parts of $j_\text{5th}$.    Here, cancellations will arise between corresponding terms in $j_\text{5th}$ and $j_\kappa$.  In particular, the two parts of the currents may be treated independently and that is what we shall do.  The dominant part of the current $j_\text{5th}-j_\kappa$ comes from $\partial_t q$ and takes the form
\begin{align}\label{j0 defn}
j_0 :=& -  4\varkappa^2 R_0(2\varkappa) \Bigl[64\kappa^7\bigl[g(\kappa)-\tfrac1{2\kappa}\bigr] + 16\kappa^4q + 4\kappa^2 q'' + q^{(4)} \notag\\
&\qquad\qquad\qquad\qquad\qquad -12\kappa^2 q^2 - 5 (q^2)'' + 5 (q')^2 + 10 q^3\Bigr].
\end{align}
Regarding the remainder, we have that under the difference flow,
\begin{align}\label{jj defn}
\tfrac{d\ }{dt} \tfrac{\vk}{g(\vk)} = [j_\text{\rm 5th}-j_\kappa-j_0]' .
\end{align}

Note that the smoothing effect of $R_0(2\varkappa)$ in \eqref{j0 defn} is not helpful: this current is to be integrated against a bump function (the derivative of the localizing cutoff) and applying this operator to that bump function simply produces another bump function.  In fact, in order to simplify the treatment of this (the most significant) term, we shall adapt our localizing function $\Psi(x)$ accordingly:
\begin{align}\label{E:Psi6}
\Psi(x) := \frac{1}{4\vk^2} \int_x^\infty \bigl[(-\partial^2+4\vk^2)\psi^{12}\bigr](x')\,dx' .
\end{align}
Clearly, $\Psi'(x)$ is a Schwartz function.

As evidence that $j_0$ really does capture the dominant terms, we now show that the remainder can be estimated in a satisfactory way:

\begin{lemma}\label{L:dLS 1}
Fix $\vk\geq 1$ and $\phi\in\Schw(\R)$. If $\delta$ is sufficiently small, then
\begin{equation}\label{E:dLS 1}
\begin{aligned}
\biggl| \int_{-1}^1 \int_{-\infty}^\infty &[j_\text{\rm 5th}-j_\kappa-j_0](t,x) \phi(x) \,dx\,dt \biggr| \\
	&\lesssim \kappa^{-\frac12} \bigl\{ 1 + \| q \|_{\LS}^2\bigr\}
			+ \bigl\|\partial_x^2 \tfrac{1}{g(\vk)}\bigr\|_{L^\infty_t H^{-1}_\kappa}  \bigl\{ \delta + \| q \|_{\LS}\bigr\},
\end{aligned}
\end{equation}
uniformly for $q:[-1,1]\to B_\delta\cap\Schw(\R)$ and $\kappa\geq 1$.
\end{lemma}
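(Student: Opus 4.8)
The plan is as follows. First I would insert the explicit formulas \eqref{E:rho dot 5}, \eqref{E:rho dot k} and \eqref{j0 defn} and observe that the terms carrying the operator $R_0(2\vk)$ disappear completely: by the very definition of $j_0$, the second line of \eqref{E:rho dot 5} minus the second line of \eqref{E:rho dot k} equals $j_0$, so that $j_\text{5th}-j_\kappa-j_0$ reduces to the difference of the ``$\tfrac{\vk}{g(\vk)}$-currents'' of the two constituent flows, namely
\begin{align*}
j_\text{5th}-j_\kappa-j_0 &= -\tfrac{2\vk}{g(\vk)}\bigl[16\vk^5 g(\vk) - 8\vk^4 + 4\vk^2 q + q'' - 3q^2\bigr] \\
&\quad - \tfrac{32\kappa^7\vk}{\kappa^2-\vk^2}\tfrac{g(\kappa)}{g(\vk)} + 8\kappa^2\vk\tfrac{2\kappa^2+2\vk^2-q}{g(\vk)} + \tfrac{32\kappa^2\vk^6}{\kappa^2-\vk^2},
\end{align*}
in accordance with \eqref{jj defn}. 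As usual, the partition-of-unity argument (as in the proofs of Lemma~\ref{L:change} and Corollary~\ref{C:h in L^1}), together with the translation invariance of the difference flow, reduces matters to the case in which $\phi$ is a fixed power of $\psi$.

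The heart of the matter is the algebraic reduction of the right-hand side above. One expands $g(\kappa)=\tfrac1{2\kappa}+h_1(\kappa)+\sum_{\ell\ge2}h_\ell(\kappa)$, uses $h_1(\kappa)=-\tfrac1\kappa R_0(2\kappa)q$ together with the operator identity $4\kappa^2R_0(2\kappa)=\Id+R_0(2\kappa)\partial_x^2$, and regroups. The upshot is that every term carrying a coefficient of positive $\kappa$-power cancels: the $O(\kappa^4)$ pieces coming from $\tfrac{32\kappa^7\vk}{\kappa^2-\vk^2}\cdot\tfrac1{2\kappa}$ and from $8\kappa^2\vk\cdot 2\kappa^2$ annihilate one another; the $O(\kappa^2)$ pieces, after substituting $R_0(2\kappa)q=\tfrac1{4\kappa^2}(q+R_0(2\kappa)q'')$ (and similarly with $q''$ in place of $q$), either cancel or collapse into bounded-coefficient multiples of the $R_0(2\kappa)$-smoothed quantities $R_0(2\kappa)q^{(4)}$ and $R_0(2\kappa)q''$. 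What survives is $\tfrac1{g(\vk)}$ multiplied by: (i) a constant and a term linear in $q$, each with coefficient $O(\kappa^{-2})$; (ii) the $R_0(2\kappa)$-smoothed terms just mentioned; (iii) the quadratic term $\tfrac{6\vk q^2}{g(\vk)}$; and (iv) the tail $-\tfrac{32\kappa^7\vk}{\kappa^2-\vk^2}\,\tfrac1{g(\vk)}\sum_{\ell\ge2}h_\ell(\kappa)$; together with the harmless additive constant $\tfrac{32\vk^8}{\kappa^2-\vk^2}$.

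It then remains to estimate $\int_{-1}^1\!\int\phi$ against each of these. The tail (iv) is controlled, with several powers of $\kappa^{-1}$ to spare, by Corollary~\ref{C:h in H^1} applied with energy parameter $\kappa$ (Lemma~\ref{L:g up} disposing of the $1/g(\vk)$ factor). The terms (i) and (ii) are handled by integrating by parts in $x$ to move derivatives off $q$ and onto the test function: the smoothing of $R_0(2\kappa)$, the $H^{-1}_\kappa$-bounds of Lemma~\ref{L:change} applied with energy parameter $\kappa$, and the exponential localization furnished by $R_0(2\kappa)$ together beat the prefactors; the one residual pairing not absorbed in this way is $\bigl\langle\partial_x^2\tfrac1{g(\vk)},\,\phi q\bigr\rangle$, which is estimated via $\|\partial_x^2\tfrac1{g(\vk)}\|_{H^{-1}_\kappa}\|\phi q\|_{H^1_\kappa}$ and $\|\phi q\|_{L^2_tH^1_\kappa}\lesssim_\phi \|q\|_{\LS}+\kappa^2\|q\|_{L^\infty_tH^{-1}}$ — this is what produces the factor $\|\partial_x^2\tfrac1{g(\vk)}\|_{L^\infty_tH^{-1}_\kappa}\{\delta+\|q\|_{\LS}\}$ on the right of \eqref{E:dLS 1}. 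For the quadratic term (iii) one rewrites $q^2$ using \eqref{E:h2 exp} with energy parameter $\vk$: the $16\vk^5h_2(\vk)$ part is negligible by \eqref{E:h in H1}, while the remaining pieces, being built from $h_1(\vk)=-\tfrac1\vk R_0(2\vk)q$ and its derivatives, are controlled by Proposition~\ref{P:h1} (recall $\vk$ is frozen here) together with the smoothing of $R_0(2\vk)$.

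The main obstacle is precisely the algebraic bookkeeping of the second step: the raw formula for $j_\text{5th}-j_\kappa-j_0$ contains terms that grow like $\kappa^5$, and one must exhibit the exact symbol identities that reduce every dangerous power of $\kappa$ to $O(\kappa^{-2})$ (or relegate it to an $R_0(2\kappa)$-smoothed quantity with a bounded coefficient) \emph{before} any analytic estimate is made. Within the analytic part, the treatment of the quadratic current $\tfrac{q^2}{g(\vk)}$ — the ``figurehead'' obstruction discussed in the introduction — and the careful extraction of the $\|\partial_x^2\tfrac1{g(\vk)}\|_{L^\infty_tH^{-1}_\kappa}$ term are the most delicate points.
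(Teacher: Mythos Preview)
Your decomposition into (i)--(iv) contains a genuine gap: items (iii) and (iv) cannot be handled separately, because the essential cancellation is between $3q^2$ (your item (iii)) and $16\kappa^5 h_2(\kappa)$ (the $\ell=2$ part of your item (iv)).

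Concretely, the prefactor in (iv) is $\tfrac{32\kappa^7\vk}{\kappa^2-\vk^2}\sim 32\vk\kappa^5$, while the best available bound on $h_2(\kappa)$ alone, Corollary~\ref{C:h in H^1} with $m=2$, gives $\|\phi h_2(\kappa)\|_{L^2_tH^1_\kappa}\lesssim\kappa^{-19/6}\delta\{\delta+\|q\|_{\LS}\}$. The product is of order $\kappa^{11/6}$, not ``several powers of $\kappa^{-1}$ to spare'' as you claim. (The terms $\ell\ge3$ in (iv) are indeed fine, via Corollary~\ref{C:h in L^1}.) Conversely, rewriting $q^2$ via \eqref{E:h2 exp} at the \emph{frozen} parameter $\vk$ produces an expression built from $h_1(\vk),h_2(\vk)$ whose estimates --- Proposition~\ref{P:h1} and \eqref{E:h in H1}, both at $\vk$ --- carry no $\kappa$-dependence whatsoever; you obtain at best a bound $O(1+\|q\|_{\vLS}^2)$, and since $\vk\le\kappa$ one has $\|q\|_{\vLS}\gtrsim\|q\|_{\LS}$ with a ratio that can grow like $\kappa$. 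Neither term of RHS\eqref{E:dLS 1} can absorb an $O(1)$ contribution, as both vanish when $\kappa\to\infty$.

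The paper's remedy is to regroup so that $-\tfrac{2\vk}{g(\vk)}\bigl\{16\kappa^5[g(\kappa)-\tfrac1{2\kappa}]+4\kappa^2q+q''-3q^2\bigr\}$ appears as a single block, with three further blocks carrying successive extra factors of $\vk^2$ and correspondingly lower powers of $\kappa$. Within this leading block one applies \eqref{E:h2 exp} at parameter $\kappa$ to the combination $16\kappa^5 h_2(\kappa)-3q^2$, obtaining $\kappa^{-2/3}$-decay via Proposition~\ref{P:h1} at $\kappa$; and \eqref{E:h1 exp} at $\kappa$ to $16\kappa^5h_1(\kappa)+4\kappa^2q+q''=\kappa h_1^{(4)}(\kappa)$, which after two integrations by parts yields the pairing $\bigl\langle\partial_x^2\tfrac1{g(\vk)},\,R_0(2\kappa)\phi q''\bigr\rangle$ responsible for the second term on RHS\eqref{E:dLS 1}. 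Your route to the linear part via iterated use of $4\kappa^2R_0(2\kappa)=\Id+R_0(2\kappa)\partial_x^2$ is a legitimate alternative to this last step, but the quadratic cancellation at parameter $\kappa$ is unavoidable.
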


\begin{proof}
After considerable rearrangement, \eqref{E:rho dot 5} and \eqref{E:rho dot k} yield
\begin{align}
j_\text{5th} - j_\kappa -j_0
	&= - \tfrac{2\vk}{g(\vk)} \Bigl\{ 16\kappa^5 \bigl[g(\kappa)-\tfrac1{2\kappa} \bigr]   + 4 \kappa^2 q + q'' - 3q^2 \Bigr\}  \label{diff current1} \\
&\quad - \tfrac{2\vk^3}{g(\vk)} \Bigl\{ 16\kappa^3 \bigl[g(\kappa)-\tfrac1{2\kappa} \bigr]   + 4 q  \Bigr\} \label{diff current2} \\
&\quad - \tfrac{2\vk^5}{g(\vk)} \Bigl\{ 16\kappa^{\ } \bigl[g(\kappa)-\tfrac1{2\kappa} \bigr]  \Bigr\}  \label{diff current3} \\
&\quad - \tfrac{2\vk^7}{g(\vk)} \Bigl\{ \tfrac{16}{\kappa^2-\vk^2} \bigl[\kappa g(\kappa)-\vk g(\vk) \bigr] \Bigr\}. \label{diff current4}
\end{align}
While this can be written in a more compact way, this expression helps highlight an underlying pattern.  In estimating the contribution of these terms, it will be convenient to employ an auxiliary $\tilde\varphi\in\Schw(\R)$, chosen so that $|\phi| \leq \tilde\varphi^2$. 

From \eqref{E:h in Linfty} and the diffeomorphism property (Lemma~\ref{L:diffeo}), we see that
$$
\kappa^{\frac32} \bigl\| g(\kappa) - \tfrac1{2\kappa} \bigr\|_{L^\infty_{t,x}} + \| g(\vk) \|_{L^\infty_{t,x}} +  \| \tfrac{1}{g(\vk)} - 2\vk \|_{L^\infty_t H^1} \lesssim 1
$$
and so the contributions of \eqref{diff current3} and \eqref{diff current4} are clearly acceptable.

We now turn our attention to \eqref{diff current1} and \eqref{diff current2}, which include cancellations.  The parts of these terms that do not involve cancellations are easily settled by using Corollaries~\ref{C:h in L^1} and~\ref{C:h in H^1}: 
$$
\sum_{\ell\geq 3} \kappa^5 \| \phi h_\ell(\kappa) \|_{L^1_{t,x}}  + \kappa^3 \| \tilde\varphi h_2(\kappa) \|_{L^2_{t,x}}
	\lesssim \kappa^{-\frac56} \Bigl\{ \delta^2 + \| q\|_{\LS}^2 \Bigr\}  .  
$$
This is acceptable since $\tfrac1{g(\vk)} \in L^\infty_{t,x}$ and $\tilde\varphi\in L^2_x$.

For the remaining part of \eqref{diff current2}, we use \eqref{h1 nice} and then \eqref{E:h1 with s}:
$$
 \| \phi [4\kappa^3 h_1(\kappa) + q] \|_{L^1_{t,x}} \lesssim  \kappa \| \tilde\varphi h_1'' \|_{L^2_{t,x}} \lesssim \kappa^{-1} \bigl\{ 1 + \| q\|_{\LS}^2 \bigr\}.
$$

This leaves us to exhibit two cancellations in \eqref{diff current1}. By \eqref{E:h2 exp} and the results of Proposition~\ref{P:h1},
\begin{align*}
\bigl\| \phi[ 16\kappa^5 h_2(\kappa) - 3q^2] \bigr\|_{L^1_{t,x}}
	&\lesssim \kappa^2 \| \tilde\varphi h_1 '' \|_{L^2_{t,x}}^2 + \kappa^4 \| \tilde\varphi h_1 '' \|_{L^2_{t,x}} \| \tilde\varphi h_1 \|_{L^2_{t,x}}
			+ \kappa^4  \| \tilde\varphi h_1 ' \|_{L^2_{t,x}}^2  \\
&\lesssim  \kappa^{-\frac23}\bigl\{ \delta^2 +  \| q \|_{\LS}^2 \bigr\}.
\end{align*}

It remains to handle the contribution of $16 \kappa^5 h_1(\kappa) + 4\kappa^2 q + q''$.  Using \eqref{E:h1 exp}, \eqref{h1 nice}, and integration by parts, its contribution simplifies to
\begin{equation}
\begin{aligned}
- \iint \tfrac{2\vk}{g(\vk)} \kappa h_1^{(4)}(\kappa) \phi\,dx\,dt &= 2\vk \int \bigl\langle \bigl(\tfrac{1}{g(\vk)}\bigr)'' , \phi R_0(2\kappa) q''\bigr\rangle \,dt \\
&\quad + 2 \vk \!\iint \bigl[ \tfrac{1}{g(\vk)} \phi'' + 2 \bigl(\tfrac{1}{g(\vk)}\bigr)' \phi' \bigr] \kappa h_1''(\kappa)\,dx \,dt.
\end{aligned}
\end{equation}
In the first line, we write $\phi R_0= R_0\phi + [\phi, R_0]$ and apply Lemma~\ref{L:basic_commutator} for the commutator.  Both the commutator term and the terms in the second line are easily seen to be
$$
O\Bigl( \kappa^{-1}\bigl\{ 1 +  \| q \|_{\LS}^2 \bigr\} \Bigr) 
$$
by using \eqref{E:h1 with s} and Cauchy--Schwarz.  On the other hand, \eqref{E:L:change'} shows that
$$
\biggl|\int \bigl\langle \bigl(\tfrac{1}{g(\vk)}\bigr)'' ,  R_0(2\kappa) \phi q''\bigr\rangle \,dt \biggr|
	\lesssim \bigl\|\partial_x^2 \tfrac{1}{g(\vk)}\bigr\|_{L^\infty_t H^{-1}_\kappa}  \bigl\{ \delta + \| q \|_{\LS}\bigr\} ,
$$
which is precisely the origin of the final term in \eqref{E:dLS 1}.
\end{proof}

Next, we demonstrate the key coercivity that we require.  The argument will be rather short, because we have deliberately styled our presentation in Section~\ref{S:LS} to make this possible.

\begin{lemma}\label{L:dLS 2}
Fix $\vk\geq 1$ and $\delta$ sufficiently small.  Then
\begin{gather*}
\| (\psi^6 q )''\|_{L^2_t H^{-1}_\kappa([-1,1]\times\R)}^2 
\lesssim  \int_{-1}^1 \int_{-\infty}^\infty  j_0(t,x) \Psi'(x) \,dx\,dt + \kappa^{-\frac16} \bigr\{ \delta^2 + \|q\|_{\LS}^2 \bigr\}
\end{gather*}
uniformly for $q:[-1,1]\to B_\delta\cap\Schw(\R)$ and $\kappa\geq 1$.
\end{lemma}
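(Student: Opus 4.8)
The strategy is to use the special form of $\Psi$ in \eqref{E:Psi6} to strip $R_0(2\vk)$ off the dominant current $j_0$, and then to recognize the resulting expression as a pointwise-nonnegative perturbation of the coercive current already treated in Lemma~\ref{L:2for2}. Since $\Psi'(x) = -\tfrac{1}{4\vk^2}\bigl[(-\partial^2+4\vk^2)\psi^{12}\bigr](x)$ and $R_0(2\vk)=(-\partial^2+4\vk^2)^{-1}$ is self-adjoint, transferring $R_0(2\vk)$ from $j_0$ onto $\Psi'$ gives
\begin{multline*}
\int_{-1}^1\!\!\int j_0\,\Psi'\,dx\,dt \\
= \int_{-1}^1\!\!\int \Bigl[64\kappa^7\bigl(g(\kappa)-\tfrac1{2\kappa}\bigr) + 16\kappa^4q + 4\kappa^2q'' + q^{(4)} - 12\kappa^2q^2 - 5(q^2)'' + 5(q')^2 + 10q^3\Bigr]\psi^{12}\,dx\,dt.
\end{multline*}
In particular the denominator $g(\vk)$ has disappeared entirely; unlike in Section~\ref{S:LS}, no bound on $1/g$ is needed here.

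Next I would expand $g(\kappa)-\tfrac1{2\kappa}=\sum_{\ell\ge1}h_\ell(\kappa)$ and group the integrand by degree of homogeneity in $q$. By the symbol identity \eqref{E:h1 exp} (applied with energy $\kappa$), the entire linear-in-$q$ part collapses to $64\kappa^7h_1(\kappa)+16\kappa^4q+4\kappa^2q''+q^{(4)}=\kappa h_1^{(6)}(\kappa)$; integrating by parts and applying \eqref{E:h1 with s} then bounds its contribution by an acceptable error. The cubic part $64\kappa^7h_3(\kappa)+10q^3$ is exactly $-j_2$ evaluated at energy $\kappa$, so Lemma~\ref{L:cubic} controls it by $\kappa^{-1/6}\delta\{\delta^2+\|q\|_{\LS}^2\}$; and the tail $\ell\ge4$ is bounded by $64\kappa^7\sum_{\ell\ge4}\|\psi^{12}h_\ell(\kappa)\|_{L^1_{t,x}}\lesssim\kappa^{-1/3}\delta^2\{\delta^2+\|q\|_{\LS}^2\}$ using \eqref{eq:commutator_quartic_f}.

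All the coercivity thus resides in the quadratic part $64\kappa^7h_2(\kappa)-12\kappa^2q^2-5(q^2)''+5(q')^2$. Comparing with the integrand defining $I(\kappa)$ in Lemma~\ref{L:2for2}, the two differ by precisely $+8\kappa^4[h_1''(\kappa)]^2$, which is pointwise nonnegative, so integrating against $\psi^{12}\ge0$,
\begin{align*}
\int_{-1}^1\!\!\int\bigl[64\kappa^7h_2(\kappa)-12\kappa^2q^2-5(q^2)''+5(q')^2\bigr]\psi^{12}\,dx\,dt
&= I(\kappa) + 8\kappa^4\!\!\int_{-1}^1\!\!\int[h_1''(\kappa)]^2\psi^{12}\,dx\,dt \\
&\ge I(\kappa),
\end{align*}
while \eqref{2for2} (with energy $\kappa$) gives $I(\kappa)\gtrsim\|(\psi^6q)''\|^2_{L^2_tH^{-1}_\kappa}-\kappa^{-1/3}\{\delta^2+\|q\|_{\LS}^2\}$. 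Combining the four contributions and rearranging yields the claimed bound. I do not expect a substantive obstacle: the genuinely hard work --- the cubic cancellation of Lemma~\ref{L:cubic} and the coercivity analysis of Lemma~\ref{L:2for2} --- was already carried out in Section~\ref{S:LS} and is simply reused with $\kappa$ in place of $\vk$, and the one new ingredient is the elementary observation that the definition \eqref{E:Psi6} of $\Psi$ is engineered so that $R_0(2\vk)$ inverts against $\Psi'$. The mild nuisance, absent from the KdV analog, is the lingering linear term $\kappa h_1^{(6)}(\kappa)$, which the above handles; the bookkeeping of the homogeneity splitting is the only point that requires care.
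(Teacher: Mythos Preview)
Your proposal is correct and follows essentially the same route as the paper: use the choice of $\Psi$ to replace $-4\vk^2 R_0(2\vk)\Psi'$ by $\psi^{12}$, expand $g(\kappa)-\tfrac1{2\kappa}$ via \eqref{E:g defn}, dispatch the linear block through \eqref{E:h1 exp}, the cubic block through Lemma~\ref{L:cubic}, the tail $\ell\ge4$ through Corollary~\ref{C:h in L^1}, and observe that the quadratic block dominates $I(\kappa)$ by the nonnegative discrepancy $8\kappa^4[h_1''(\kappa)]^2$, so that \eqref{2for2} applies. The only cosmetic difference is that the paper bounds the residual $\kappa h_1^{(6)}(\kappa)$ via \eqref{E:h in Linfty} rather than \eqref{E:h1 with s}; either works.
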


\begin{proof}
Recall that $\Psi$ was chosen so that $- 4\vk^2R_0(2\vk)\Psi'= \psi^{12}$.  Employing this and the series \eqref{E:g defn}, we find
\begin{align}
\iint j_0 \Psi' \,dx\,dt &= \iint \bigl[64\kappa^7h_1(\kappa) + 16\kappa^4q + 4\kappa^2 q'' + q^{(4)} \bigr] \psi^{12} \,dx\,dt \label{10:14}\\
&\qquad+ \iint \bigl[64\kappa^7h_2(\kappa) -12\kappa^2 q^2 - 5 (q^2)'' + 5 (q')^2 \big] \psi^{12} \,dx\,dt \label{10:15}\\
&\qquad+ \iint \biggl[10 q^3 + 64\kappa^7\sum_{\ell\geq 3} h_\ell(\kappa) \biggr] \psi^{12} \,dx\,dt . \label{10:16}
\end{align}

Applying \eqref{E:h1 exp} in \eqref{10:14}, we are lead to estimate
\begin{align*}
\biggl|\iint \kappa h_1^{(6)}(\kappa) \psi^{12} \,dx\,dt \biggr| \lesssim \kappa \| h_1 \|_{L^\infty_{t,x}} \lesssim \kappa^{-\frac12} \delta, 
\end{align*}
by integration by parts and \eqref{E:h in Linfty}.

To estimate \eqref{10:15}, we look to Lemma~\ref{L:2for2}.  Indeed, what appears here is greater  than $I(\kappa)$ due to the absence of the $-8\kappa^4[h_1''(\kappa)]^2$ term.

Looking now at \eqref{10:16}, we note that the $\ell=3$ term is handled by Lemma~\ref{L:cubic}, while $\ell\geq 4$ is bounded acceptably by Corollary~\ref{C:h in L^1}.
\end{proof}

\begin{proof}[Proof of Proposition~\ref{P:dLS}]  Choosing $\Psi$ as in \eqref{E:Psi6},  we begin with the identity
\begin{equation}\label{E:int micro'}
\int [\rho(1,x) - \rho(-1,x)] \Psi(x)\,dx = \int_{-1}^1 \int_{-\infty}^\infty [j_\text{5th}-j_\kappa](t,x) \Psi'(x)\,dx\,dt,
\end{equation}
valid for any Schwartz solution to the difference flow.  We then apply Lemmas~\ref{L:dLS 1} and~\ref{L:dLS 2} on the right-hand side; on the left-hand side, we use \eqref{E:rho alpha} and Proposition~\ref{P:H-1 bound}.  In this way, we find that
\begin{gather}\label{pool}
\| (\psi^6 q )''\|_{L^2_t H^{-1}_\kappa}^2 
\lesssim  1 + \kappa^{-\frac16} \|q\|_{\LS}^2 + \bigl\|\partial_x^2 \tfrac{1}{g(\vk)}\bigr\|_{L^\infty_t H^{-1}_\kappa} \bigl\{ \delta + \| q \|_{\LS}\bigr\}.
\end{gather}

Next we use Lemma~\ref{L:diffeo} and Proposition~\ref{P:H-1 bound} to see that
$$
\bigl\|\partial_x^2 \tfrac{1}{g(\vk)}\bigr\|_{L^\infty_t H^{-1}_\kappa} \lesssim \bigl\| \tfrac{1}{g(\vk)} - 2\vk \bigr\|_{L^\infty_t H^{1}_\vk}
	\lesssim \delta \lesssim 1
$$
and thence that 
\begin{equation*}
\bigl\|\partial_x^2 \tfrac{1}{g(\vk)}\bigr\|_{L^\infty_t H^{-1}_\kappa}  \| q \|_{\LS} \lesssim \eps^{-1} + \eps \| q \|_{\LS}^2 ,
\end{equation*}
uniformly for $\eps>0$.  The result now follows by plugging this into \eqref{pool}, taking a supremum over translates of $q$ (to recover the $\LS$ norm on the left-hand side) and finally by choosing $\kappa_0$ large enough and $\eps$ small enough. 
\end{proof}

Having proved our local smoothing estimate for the difference flow, we now demonstrate its role in the proof of well-posedness, namely, to show that the $H_\kappa$ flows closely track the full $H_\text{5th}$ flow (for $\kappa$ large).   This proximity is expressed through the reciprocal Green's function and in the weak topology.  These limitations will be removed in the next section by using the compactness demonstrated in Section~\ref{S:C}.

\begin{corollary}\label{C:dLS}
Fix $\vk\geq 1$ and $\delta>0$ sufficiently small. Given any $Q\subset B_\delta\cap \Schw(\R)$ that is $H^{-1}(\R)$-equicontinuous and any $\phi\in\Schw(\R)$, 
\begin{equation}\label{E:C:dLS}
\lim_{\kappa\to\infty} \ \sup_{q\in Q}\  \sup_{|t|\leq 1} \ \Bigl| \bigl\langle \phi,\bigl[ \tfrac{1}{g(t)} - \tfrac{1}{g_\kappa(t)} \bigr] \bigr\rangle \Bigr| = 0.
\end{equation}
Here we use the notations
\begin{align}\label{g flowed}
g(t) = g\bigl(x;\vk, e^{tJ\nabla H_\text{5th}} q \bigr) \qtq{and} g_\kappa(t) = g\bigl(x;\vk, e^{tJ\nabla H_\kappa} q \bigr).
\end{align}
\end{corollary}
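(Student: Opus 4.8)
The plan is to exploit the commutativity of the $H_\text{5th}$ and $H_\kappa$ flows in order to realize the difference $e^{tJ\nabla H_\text{5th}}q - e^{tJ\nabla H_\kappa}q$ as a single orbit of the difference flow, and then to integrate the evolution equation for $\tfrac{1}{2g}$ under that flow against $\phi$. Concretely, writing $q_\kappa(t) := e^{tJ\nabla H_\kappa}q$ for the point in the rich class $Q$ of initial data, commutativity of the flows gives
\[
e^{tJ\nabla H_\text{5th}} q = e^{tJ\nabla (H_\text{5th}-H_\kappa)}\bigl(q_\kappa(t)\bigr),
\]
so that the quantity inside \eqref{E:C:dLS} is the increment of $\langle \phi, \tfrac{1}{g(\vk,\,\cdot\,)}\rangle$ along the difference flow started from $q_\kappa(t)\in Q_*$, where $Q_*$ (in the notation of Proposition~\ref{P:H-1 bound}) is bounded and $H^{-1}$-equicontinuous because $Q$ is. Thus it suffices to bound, uniformly for data $\tilde q\in Q_*\cap\Schw(\R)$ and $|\tau|\le 1$, the quantity $\bigl|\langle \phi, \tfrac1{g(\vk,\tilde q(\tau))} - \tfrac1{g(\vk,\tilde q(0))}\rangle\bigr|$, where $\tilde q$ solves the difference flow.

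The next step is to express this increment via the spacetime identity \eqref{jj defn}, namely $\tfrac{d}{dt}\tfrac{\vk}{g(\vk)} = [j_\text{5th}-j_\kappa-j_0]'$ under the difference flow. Integrating in time and pairing against $\phi$, we get
\[
\Bigl\langle \phi,\ \vk\bigl[\tfrac{1}{g(\vk,\tilde q(\tau))} - \tfrac{1}{g(\vk,\tilde q(0))}\bigr]\Bigr\rangle
= -\int_0^\tau\!\!\int_\R \bigl[j_\text{5th}-j_\kappa-j_0\bigr]\,\phi'\,dx\,dt
\ -\ \int_0^\tau\!\!\int_\R j_0\,\phi'\,dx\,dt .
\]
Hmm — more precisely, since $\tfrac{d}{dt}\tfrac{\vk}{g} = [j_\text{5th}-j_\kappa]'$, the first integral should carry $[j_\text{5th}-j_\kappa]\phi'$; splitting off $j_0$ is purely cosmetic here, but convenient because Lemma~\ref{L:dLS 1} controls the $[j_\text{5th}-j_\kappa-j_0]\,\phi'$ piece and the $j_0$ piece will be handled separately. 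For the $j_0$ term, I would \emph{not} use the $\Psi$ from \eqref{E:Psi6} (which was adapted to produce coercivity in Section~\ref{S:dLS}); instead, since $j_0 = -4\vk^2 R_0(2\vk)[\,\cdots\,]$, I move the operator onto the test function via $4\vk^2 R_0(2\vk)\phi' = \phi' - R_0(2\vk)\phi'''$ (both Schwartz), and then estimate each term in the bracket of \eqref{j0 defn} paired against a fixed Schwartz function. The bracket's constituents are exactly the ones appearing in Section~\ref{S:LS} (via $16\kappa^5 h_1+4\kappa^2 q + q'' = $ derivatives of $h_1$ through \eqref{E:h1 exp}, the quadratic combination $16\kappa^7 h_2 - 12\kappa^2 q^2 - \cdots$ controlled via Lemma~\ref{L:2for2}-type expansions or more crudely via Proposition~\ref{P:h1}, and the cubic/higher terms via Lemma~\ref{L:cubic} and Corollary~\ref{C:h in L^1}); paired against a Schwartz weight rather than a bump, each is $O(\kappa^{-c}\{1+\|\tilde q\|_{\LS}^2\})$ for some $c>0$. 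Combining with Lemma~\ref{L:dLS 1}, and using Lemma~\ref{L:diffeo} plus Proposition~\ref{P:H-1 bound} to see $\|\partial_x^2 \tfrac1{g(\vk)}\|_{L^\infty_t H^{-1}_\kappa}\lesssim\delta$, we obtain
\[
\sup_{|\tau|\le 1}\ \Bigl|\bigl\langle \phi,\ \tfrac{1}{g(\vk,\tilde q(\tau))} - \tfrac{1}{g(\vk,\tilde q(0))}\bigr\rangle\Bigr|
\ \lesssim_\phi\ \kappa^{-c}\bigl\{\,1 + \|\tilde q\|_{\LS}^2\,\bigr\}.
\]

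The final step is to absorb the right-hand side uniformly over $Q_*$. This is exactly where Proposition~\ref{P:dLS} enters: applied to the difference flow started at $\tilde q\in Q_*\cap\Schw(\R)$, it gives $\|\tilde q\|_{\LS}^2 \lesssim 1$ uniformly, for $\kappa\ge\kappa_0$ large and $\delta$ small. (One needs $Q_*\subseteq B_\delta$ — this follows from Proposition~\ref{P:H-1 bound} after noting that our overall small-data reduction keeps everything in $B_\delta$ — and that the difference flow of a Schwartz function stays Schwartz, which is standard since $H_\kappa$ and $H_\text{5th}$ both preserve $\Schw(\R)$.) With this bound the right side becomes $\lesssim_\phi \kappa^{-c}$, which tends to $0$ as $\kappa\to\infty$, uniformly in $q\in Q$ and $|t|\le 1$; this is \eqref{E:C:dLS}. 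The main obstacle is bookkeeping rather than conceptual: verifying that every term arising from $j_\text{5th}-j_\kappa-j_0$ and from $j_0$ can be estimated against a \emph{fixed} Schwartz weight with a genuinely negative power of $\kappa$, which is precisely what Section~\ref{S:LS} and Lemma~\ref{L:dLS 1} were engineered to deliver; the one point requiring a little care is that the $j_0$ contribution must be reorganized (via $4\vk^2R_0(2\vk)\phi' = \phi' - R_0(2\vk)\phi'''$) so as not to rely on the special cancellation structure of the weight $\Psi$ from \eqref{E:Psi6}, since here no coercivity is needed — only smallness.
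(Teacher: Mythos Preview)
Your reduction via commutativity to the difference flow, and the idea of integrating the evolution of $\tfrac{\vk}{g}$ against $\phi$, are exactly right and match the paper. However, two points need correction, one minor and one fatal.

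\textbf{Minor.} You have misread \eqref{jj defn}: it says $\tfrac{d}{dt}\tfrac{\vk}{g(\vk)} = [j_\text{5th}-j_\kappa-j_0]'$, not $[j_\text{5th}-j_\kappa]'$. The current $j_0$ arises from the $4\vk^2 R_0(2\vk)q$ part of $\rho$, which is absent from $\tfrac{\vk}{g}$. So there is no $j_0$ contribution here at all, and your entire paragraph devoted to estimating $\int j_0\,\phi'$ is unnecessary.

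\textbf{The real gap.} After applying Lemma~\ref{L:dLS 1} and Proposition~\ref{P:dLS}, you obtain
\[
\Bigl|\bigl\langle \phi,\ \tfrac{1}{g_\text{diff}(\tau)} - \tfrac{1}{g_\text{diff}(0)}\bigr\rangle\Bigr|
\ \lesssim\ \kappa^{-1/2} + \bigl\|\partial_x^2 \tfrac{1}{g(\vk)}\bigr\|_{L^\infty_t H^{-1}_\kappa}.
\]
You then bound the second term by $\delta$ via Lemma~\ref{L:diffeo}. That bound is correct but useless: $\delta$ is a fixed constant and carries no decay in $\kappa$, so your claimed inequality $\lesssim_\phi \kappa^{-c}\{1+\|\tilde q\|_{\LS}^2\}$ does not follow. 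Notice that you never use the $H^{-1}$-equicontinuity hypothesis on $Q$ --- a sign that something is missing. The paper's proof uses equicontinuity precisely here: Proposition~\ref{P:H-1 bound} transfers equicontinuity from $Q$ to $Q_{**}$, the diffeomorphism (which commutes with translations) transfers it to $H^1$-equicontinuity of $\mathcal G = \{\tfrac{1}{g(\vk;q)}-2\vk : q\in Q_{**}\}$, and then
\[
\bigl\|\partial_x^2 f\bigr\|_{H^{-1}_\kappa}^2 = \int \frac{\xi^4 |\hat f(\xi)|^2}{\xi^2+4\kappa^2}\,d\xi \to 0
\qtq{as $\kappa\to\infty$, uniformly for $f\in\mathcal G$,}
\]
by splitting at $|\xi|=N$ and using boundedness for $|\xi|\leq N$ and equicontinuity for $|\xi|\geq N$. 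Without this step your argument gives only $O(1)$, not $o(1)$.
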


\begin{proof}
Let us define
\begin{align*}
Q_* = \{ e^{tJ\nabla H_\kappa} q : t\in[-1,1] \text{ and } q\in Q\} \qtq{and} g_\text{diff}(t) = g\bigl(x;\vk, e^{tJ\nabla (H_\text{5th}-H_\kappa)} q \bigr).
\end{align*}
Then, by the commutativity of the flows, we need only show
\begin{align*}
\limsup_{\kappa\to\infty} \ \sup_{q\in Q_*}\  \sup_{|t|\leq 1} \  \relax
	\Bigl| \bigl\langle \phi,\bigl[ \tfrac{1}{g_\text{diff}(t)} - \tfrac{1}{g_\text{diff}(0)} \bigr] \bigr\rangle \Bigr|  =0.
\end{align*}
Also, from Proposition~\ref{P:H-1 bound}, we see that the set $Q_*$ inherits boundedness and equicontinuity from $Q$.  This will be important.

Beginning with \eqref{jj defn} and then applying Lemma~\ref{L:dLS 1} and Proposition~\ref{P:dLS}, we find
\begin{align*}
\bigl\| \bigl\langle \phi, \partial_t \tfrac{1}{g_\text{diff}(t)}\bigr\rangle \bigr\|_{L^1_t}
		\lesssim \kappa^{-\frac12} + \bigl\|\partial_x^2 \tfrac{1}{g_\text{diff}(t)}\bigr\|_{L^\infty_t H^{-1}_\kappa} ,
\end{align*}
uniformly for $q\in Q_*$ and $\kappa\geq\kappa_0$.  In this way, the proof of the proposition reduces to verifying the equicontinuity property
\begin{align}\label{1/g equi}
\limsup_{\kappa\to\infty} \ \sup_{q\in Q_*}\  \sup_{|t|\leq 1} \  \relax \bigl\|\partial_x^2 \bigl[\tfrac{1}{g_\text{diff}(t)}-2\vk\bigr]\bigr\|_{H^{-1}_\kappa} =0 .
\end{align}
The justification for calling this an equicontinuity property lies in the proof: Let
$$
Q_{**}=\bigl\{ e^{tJ\nabla (H_\text{5th}-H_\kappa)} q : t\in[-1,1],\ q\in Q_*\bigr\} \qtq{and} \mathcal G = \bigl\{  \tfrac{1}{g(\vk;q)}-2\vk : q\in Q_{**} \bigr\}.
$$
Then Proposition~\ref{P:H-1 bound} shows that $Q_{**}$ is $H^{-1}$-bounded and equicontinuous. It then follows that $\mathcal G$ is $H^1$-bounded and equicontinuous; this relies on both the diffeomorphism property (Lemma~\ref{L:diffeo}) and the fact that the mapping $q\mapsto (1/g-2\vk)$ commutes with translations.
This commutation property is not profound; it simply says that the Green's function for a translated potential is the corresponding translate of the original Green's function.  (On the other hand, even linear isomorphisms such as the Fourier transform on $L^2(\R)$ need not preserve equicontinuity.)

Returning now to \eqref{1/g equi}, we must show that
$$
\limsup_{\kappa\to\infty} \ \sup_{f\in\mathcal G}\ \int \frac{\xi^4 |\hat f(\xi)|^2}{\xi^2+\kappa^2}\,d\xi =0.
$$
This follows immediately from boundedness and equicontinuity.
\end{proof}

\section{Well-posedness}\label{S:WP}

In this section, we prove Theorem~\ref{T:main}.  This result will then allow us to upgrade the a priori bound proved in Section~\ref{S:LS} to a complete proof of Theorem~\ref{T:LS}.

\begin{proof}[Proof of Theorem~\ref{T:main}.]
Our immediate goal is to show that for any sequence of initial data $q_n\in B_\delta\cap\Schw(\R)$ that is $H^{-1}$-convergent, the corresponding solutions $q(t)$ to \eqref{5th} are Cauchy in $C_t H^{-1} ([-1,1]\times\R)$.  Here $\delta>0$ is assumed sufficiently small.  All claims in Theorem~\ref{T:main} can readily be deduce from this and the scaling transformation~\eqref{E:scaling}.

Mimicking the notations used in Corollary~\ref{C:dLS}, let us define
\begin{align}\label{gn flowed}
g_n(t) = g\bigl(x;\vk, e^{tJ\nabla H_\text{5th}} q_n \bigr) \qtq{and} g_{n,\kappa}(t) = g\bigl(x;\vk, e^{tJ\nabla H_\kappa} q_n \bigr),
\end{align}
where $\vk\geq 1$ is fixed here and for the remainder of the proof.

Given any $\phi\in\Schw(\R)$, we clearly have
\begin{align*}
\sup_{|t|\leq 1} \, \Bigl| \bigl\langle \phi,\bigl[ \tfrac{1}{g_n(t)} - \tfrac{1}{g_m(t)} \bigr] \bigr\rangle \Bigr| & \leq
	\sup_{|t|\leq 1} \, \Bigl| \bigl\langle \phi,\bigl[ \tfrac{1}{g_n(t)} - \tfrac{1}{g_{n,\kappa}(t)} \bigr] \bigr\rangle \Bigr| + 
	\Bigl| \bigl\langle \phi,\bigl[ \tfrac{1}{g_m(t)} - \tfrac{1}{g_{m,\kappa}(t)} \bigr] \bigr\rangle \Bigr| \\
&\qquad + \sup_{|t|\leq 1} \,  \Bigl| \bigl\langle \phi,\bigl[ \tfrac{1}{g_{n,\kappa}(t)} - \tfrac{1}{g_{m,\kappa}(t)} \bigr] \bigr\rangle \Bigr| .
\end{align*}
The significance of this is that by Corollary~\ref{C:dLS}, the first line can be made arbitrarily small (uniformly in $n$ and $m$) by choosing $\kappa$ sufficiently large.  Moreover, having chosen $\kappa$, the term on the second line can be made arbitrarily small by choosing $n$ and $m$ large enough; this is a consequence of the well-posedness of the $H_\kappa$ flow and the diffeomorphism property (Lemma~\ref{L:diffeo}).

Thus we have a form of weak-$H^1$ convergence of $\tfrac{1}{g_n} - 2\vk$ with some uniformity in $t$.  However, by Proposition~\ref{P:compact} and the diffeomorphism property,
$$
\mathcal G = \bigl\{  \tfrac{1}{g_n(t)}-2\vk : n\in \N \text{ and } t\in[-1,1] \bigr\}
$$
is precompact in $H^1$.  Thus (e.g., arguing by contradiction), we see that
$$
\tfrac{1}{g_n(t)}-2\vk \quad\text{is a Cauchy sequence in $C_t H^1([-1,1]\times\R)$.}
$$
Thus, by the diffeomorphism property, $q_n(t)$ is Cauchy in $C_t H^{-1} ([-1,1]\times\R)$.
\end{proof}

\begin{proof}[Proof of Theorem~\ref{T:LS}.]  In view of the scaling \eqref{E:scaling}, it suffices to prove all claims for small initial data.  Note that for such small data, the last term on RHS\eqref{E:T:LS} is redundant; it arises when undoing the scaling.

Given that solutions for general data are defined as limits of Schwartz solutions, it suffices to prove adequate estimates for such Schwartz solutions.  Concretely, we will show that for any sequence of initial data $q_n(0)\in B_\delta\cap\Schw(\R)$ that is $H^{-1}$-convergent, the corresponding solutions satisfy
\begin{align}\label{dinner}
\sup_{x_0\in\R} \ \iint \phi(x-x_0)^2 \bigl[ |(q_n-q_m)'(t,x)|^2 + |(q_n-q_m)(t,x)|^2 \bigr]\,dx\,dt \to0.
\end{align}
as $n,m\to\infty$.  As ever, the spacetime integral is over $[-1,1]\times\R$.
This is actually slightly stronger that what is needed to prove Theorem~\ref{T:LS}, due to the uniformity in $x_0$.  We have dropped the parameter $t_0$ here since this can be restored a posteriori by applying Theorem~\ref{T:main}.   We also recall that the boundedness of LHS\eqref{dinner} was shown already in Corollary~\ref{C:LS5}.
 
Adapting \eqref{breakfast} and \eqref{lunch} to our current setting and then applying Lemma~\ref{L:change}, we find that 
\begin{align*}
\text{LHS\eqref{dinner}} \lesssim  \vk^4  \| q_n-q_m \|_{L^\infty_t H^{-1}}^2
		+ \| q_n \|_{\vLS}^2  + \| q_m \|_{\vLS}^2
\end{align*}
uniformly in $\vk\geq 1$.  By Proposition~\ref{P:LS5} and the equicontinuity of $\{q_n(0)\}_{n\in\N}$, we see that the latter two terms can be made arbitrarily small (uniformly in $n$ and $m$) by choosing $\vk\geq 1$ sufficiently large.  But then Theorem~\ref{T:main} guarantees that the first summand can be made arbitrarily small by merely requiring $n$ and $m$ to be sufficiently large.  This proves \eqref{dinner}.

The fact that the solutions constructed in Theorem~\ref{T:main} are distributional solutions is readily deduced from the earlier parts of Theorem~\ref{T:LS}; see the discussion following the statement of Theorem~\ref{T:LS}.
\end{proof}

\end{document}